\newcommand{\pU}{p\mathcal U}
\newcommand{\qU}{q\mathcal U}
\newcommand{\IZ}{\mathbb Z}
\newcommand{\U}{\mathcal U}
\newcommand{\V}{\mathcal V}
\newcommand{\W}{\mathcal W}
\newcommand{\Ra}{\Rightarrow}
\newcommand{\e}{\varepsilon}
\newcommand{\IN}{\mathbb N}
\newcommand{\w}{\omega}
\newcommand{\rcluwL}{{}^\circ\kern-2pt\overline{uw}L}
\newcommand{\Hs}{H\kern-2pt s}
\newcommand{\St}{\mathcal{S}t}
\newcommand{\Rpsi}{\overset{...}{\kern-2pt\psi}}
\newcommand{\Tb}{\overset{...}{\kern-2pt b}}
\newcommand{\F}{\mathcal F}
\newcommand{\A}{\mathcal A}
\newcommand{\dc}{dc}
\newcommand{\cf}{\mathrm{cf}}
\newcommand{\C}{\mathcal C}
\newcommand{\elm}{\ell^{-}}
\newtheorem{theorem}{Theorem}[section]
\newtheorem{corollary}[theorem]{Corollary}
\newtheorem{proposition}[theorem]{Proposition}
\newtheorem{problem}[theorem]{Problem}
\newtheorem{claim}[theorem]{Claim}
\newtheorem{lemma}[theorem]{Lemma}
\theoremstyle{definition}
\newtheorem{definition}[theorem]{Definition}
\newtheorem{example}[theorem]{Example}
\newtheorem{remark}[theorem]{Remark}
\title{Verbal covering properties of topological spaces}
\author{Taras Banakh and Alex Ravsky}
\address{T.Banakh: Ivan Franko National University of Lviv (Ukraine), and Jan Kochanowski University in Kielce (Poland)}
\email{t.o.banakh@gmail.com}
\address{A.Ravsky: Pidstryhach Institute for Applied Problems of Mechanics and Mathematics of National Academy of Sciences, Lviv, Ukraine}
\email{oravsky@mail.ru}
\keywords{Cardinal topological invariants, covering property, neighborhood assignment, quasi-uniformity, pre-uniformity}
\subjclass{54A25; 54D20}
\thanks{The first author has been partially financed by NCN grant DEC-2012/07/D/ST1/02087.}
\begin{document}
\begin{abstract} For any topological space $X$ we study the relation between the universal uniformity $\U_X$, the universal quasi-uniformity $\qU_X$ and the universal pre-uniformity $\pU_X$ on $X$.
For a pre-uniformity $\U$ on a set $X$ and a word $v$ in the two-letter alphabet $\{+,-\}$ we define
the verbal power $\U^v$ of $\U$ and study its boundedness numbers $\ell(\U^v)$, $\bar \ell(\U^v)$, $L(\U^v)$ and $\bar L(\U^v)$. The boundedness numbers of (the Boolean operations over) the verbal powers of the canonical pre-uniformities $\pU_X$, $\qU_X$ and $\U_X$ yield new cardinal characteristics
$\ell^v(X)$, $\bar \ell^v(X)$,  $L^v(X)$, $\bar L^v(X)$, $q\ell^v(X)$, $q\bar \ell^v(X)$,  $qL^v(X)$, $q\bar L^v(X)$, $u\ell(X)$ of a topological space $X$, which generalize all known cardinal topological invariants related to  (star)-covering properties. We study the relation of the new cardinal invariants $\ell^v$, $\bar \ell^v$ to classical cardinal topological invariants such as Lindel\"of number $\ell$,
density $d$, and spread $s$. The simplest new verbal cardinal invariant is the foredensity $\elm(X)$ defined for a topological space $X$ as the smallest cardinal $\kappa$ such that for any neighborhood assignment $(O_x)_{x\in X}$ there is a subset $A\subset X$ of cardinality $|A|\le\kappa$ that meets each neighborhood $O_x$, $x\in X$. It is clear that $\elm(X)\le d(X)\le \elm(X)\cdot \chi(X)$. We shall prove that $\elm(X)=d(X)$ if $|X|<\aleph_\w$. On the other hand, for every singular cardinal $\kappa$ (with $\kappa\le 2^{2^{\cf(\kappa)}}$) we construct a (totally disconnected) $T_1$-space $X$ such that $\elm(X)=\cf(\kappa)<\kappa=|X|=d(X)$.
\end{abstract}

\maketitle

\section*{Introduction}

In this paper we suggest a uniform treatment of many (star-)covering properties considered in topological literature. Namely, for every word $v$ in the two-letter alphabet $\{+,-\}$ we define $v$-compact, weakly $v$-compact, $v$-Lindel\"of, and weakly $v$-Lindel\"of spaces, and the corresponding cardinal topological invariants $L^v$, $\bar L^v$, $\ell^v$, $\bar \ell^v$, which generalize many known cardinal invariants that have covering nature. In particular, $\ell^+$ and $\bar \ell^+$ coincide with the Lindel\"of and weakly Lindel\"of numbers, $\ell^{-+}$ coincide with the weak extent and $\ell^{+-+}$ coincides with the star-Lindel\"of number. The cardinal characteristics $L^v$, $\bar L^v$, $\ell^v$, $\bar \ell^v$ are defined in Section~\ref{s3}.  Sections~\ref{s1}, \ref{s2} are of preliminary character and collect  known information on covering and star-covering properties in topological spaces and pre-uniform spaces. In Section~\ref{s3n} we introduce three canonical pre-uniformities $\pU_X$, $\qU_X$, $\U_X$ on a topological space $X$ and study the inclusion relation between these pre-uniformities and their verbal powers. Section~\ref{s4} is devoted to studying the interplay between the density $d$ and the cardinal invariant $\ell^-$ called the foredensity.

\section{Preliminaries}\label{s1}

In this section we recall some information on covering properties of topological spaces.

Let $\w$ denote the set of all finite ordinals and let $\IN=\w\setminus\{0\}$ be the set of natural numbers.

For a subset $A$ of a topological space $X$ by $\overline{A}$ we denote the closure of the set $A$ in $X$.

We recall that a family $(A_i)_{i\in I}$ of subsets of a topological space $X$ is {\em discrete} if  each point $z\in X$ has a neighborhood that meets at most one set $A_i$, $i\in I$.

\subsection{Classical cardinal invariants in topological spaces} We recall that for a topological space $X$ its {\em character} $\chi(X)$ is defined as the smallest cardinal $\kappa$ such that each point $x\in X$ has a neighborhood base $\mathcal B_x$ of cardinality $|\mathcal B_x|\le\kappa$.

Next,  we recall the definitions of the basic cardinal invariants composing the famous Hodel's diagram \cite[p.15]{Hod} (see also \cite[p.225]{Eng}).
For a topological space $X$ let
\begin{itemize}
\item $w(X)=\min\{|\mathcal B|:\mathcal B$ is a base of the topology of $X\}$ be the {\em weight} of $X$;
\item $nw(X)=\min\{|\mathcal N|:\mathcal N$ is a network of the topology of $X\}$ be the {\em network weight} of $X$;
\item $d(X)=\min\{|A|:A\subset X,\;\overline{A}=X\}$ be the {\em density} of $X$;
\item $hd(X)=\sup\{d(Y):Y\subset X\}$ be the {\em hereditary density} of $X$;
\item $l(X)$, the {\em Lindel\"of number} of $X$, be the smallest cardinal $\kappa$ such that each open cover $\U$ of $X$ has a subcover $\V\subset\U$ of cardinality $|\V|\le\kappa$;
\item $hl(X)=\sup\{l(Y):Y\subset X\}$ be the {\em hereditary Lindel\"of number} of $X$;
\item $s(X)=\sup\{|D|:D$ is a discrete subspace of $X\}$ be the {\em spread} of $X$;
\item $e(X)=\sup\{|D|:D$ is a closed discrete subspace of $X\}$ be the {\em extent} of $X$;
\item $c(X)=\sup\{|\U|:\U$ is a disjoint family of non-empty open sets in $X\}$ be the {\em cellularity} of $X$.
\end{itemize}
These nine cardinal characteristics compose the Hodel diagram \cite{Hod} in which an arrow $f\to g$ indicates that $f(X)\le g(X)$ for any topological space $X$. The same convention concerns all other diagrams drawn in this paper.
$$\xymatrix{
&&w\\
&&nw\ar[u]\\
&hd\ar[ru]&&hl\ar[lu]\\
d\ar[ru]&&s\ar[lu]\ar[ru]&&l\ar[lu]\\
&c\ar[lu]\ar[ru]&&e\ar[lu]\ar[ru]
}
$$
The Hodel diagram can be completed by two less known cardinal characteristics:
\begin{itemize}
\item the {\em discrete extent} $de(X)=\sup\{|\A|:\A$ is a discrete family of non-empty subsets in $X\}$ and
\item the {\em discrete cellularity} $\dc(X)=\sup\{|\U|:\U$ is a discrete family of non-empty open sets in $X\}$ of $X$.
\end{itemize}
It is easy to see that each topological space $X$ has $e(X)\le de(X)$. If $X$ is a $T_1$-space, then $de(X)=e(X)$. Therefore, the Hodel's diagram extends to the following diagram (drawn horizontally). It this diagram the arrow $de\dashrightarrow e$ indicates that $de(X)\le e(X)$ for any $T_1$-space $X$.
$$
\xymatrix{
&de\ar[r]\ar[rd]\ar@/_/@{-->}[d]&l\ar[r]&hl\ar[rd]\\
\dc\ar[ru]\ar[rd]&e\ar[u]&s\ar[ru]\ar[rd]&&nw\ar[r]&w\\
&c\ar[ru]\ar[r]&d\ar[r]&hd\ar[ru]
}$$

\subsection{Star-covering properties of topological spaces}

In this subsection we recall the definitions of star-covering properties introduced and studied in \cite{DRRT}, \cite{Mat}, \cite{vMTW}. For a cover $\U$ of a set $X$ and a subset $A\subset X$ we put $\St^0(A;\U)=A$ and $\St^{n+1}(A;\U)=\bigcup\{U\in\U:U\cap\St^n(A;\U)\ne\emptyset\}$ for $n\ge 0$.

For a topological space $X$ and a non-negative integer number $n\in\w$ put
\begin{itemize}
\item $L^{*n}(X)$ be the smallest cardinal $\kappa$ such that for every open cover $\U$ of $X$ there is a subset $A\subset X$ of cardinality $|A|<\kappa$ such that $\St^n(A;\U)=X$;
\item $\bar L^{*n}(X)$ be the smallest cardinal $\kappa$ such that for every open cover $\U$ of $X$ there is a subset $A\subset X$ of cardinality $|A|<\kappa$ such that $\St^n(A;\U)$ is dense in $X$;
\item $L^{*n\kern-1pt\frac12}(X)$ be the smallest cardinal $\kappa$ such that for every open cover $\U$ of $X$ there is a subfamily $\V\subset\U$ of cardinality $|\V|<\kappa$ such that $\St^n(\cup\V;\U)=X$;
\item $\bar L^{*n\kern-1pt\frac12}(X)$ be the smallest cardinal $\kappa$ such that for every open cover $\U$ of $X$ there is a subfamily $\V\subset \U$ of cardinality $|\V|<\kappa$ such that $\St^n(\cup\V;\U)$ is dense in $X$;
\smallskip
\item $l^{*n}(X)$ be the smallest cardinal $\kappa$ such that for every open cover $\U$ of $X$ there is a subset $A\subset X$ of cardinality $|A|\le\kappa$ such that $\St^n(A;\U)=X$;
\item $\bar l^{*n}(X)$ be the smallest cardinal $\kappa$ such that for every open cover $\U$ of $X$ there is a subset $A\subset X$ of cardinality $|A|\le\kappa$ such that $\St^n(A;\U)$ is dense in $X$;
\item $l^{*n\kern-1pt\frac12}(X)$ be the smallest cardinal $\kappa$ such that for every open cover $\U$ of $X$ there is a subfamily $\V\subset\U$ of cardinality $|\V|\le\kappa$ such that $\St^n(\cup\V;\U)=X$;
\item $\bar l^{*n\kern-1pt\frac12}(X)$ be the smallest cardinal $\kappa$ such that for every open cover $\U$ of $X$ there is a subfamily $\V\subset \U$ of cardinality $|\V|\le\kappa$ such that $\St^n(\cup\V;\U)$ is dense in $X$;
\item $l^{*\w}(X)=\min\{l^{*n}(X):n\in\w\}$ and $\bar l^{*\w}(X)=\min\{\bar l^{*n}:n\in\w\}$.
\end{itemize}

Observe that the cardinal characteristics $L^{*n}$, $\bar L^{*n}$, $L^{*n\kern-1pt\frac12}$, $\bar L^{*n\kern-1pt\frac12}$ determine the values of the cardinal characteristics  $l^{*n}$, $\bar l^{*n}$, $l^{*n\kern-1pt\frac12}$, $\bar l^{*n\kern-1pt\frac12}$ as $l^{*n}(X)=L^{*n}(X)_-$, $\bar l^{*n}(X)=\bar L^{*n}(X)_-$, $l^{*n\kern-1pt\frac12}(X)=L^{*n\kern-1pt\frac12}(X)_-$, $\bar l^{*n\kern-1pt\frac12}(X)=\bar L^{*n\kern-1pt\frac12}(X)_-$ for every topological space $X$. Here for a cardinal $\kappa$ by $\kappa_-=\sup\{\lambda:\lambda<\kappa\}$ we denote its ``predecessor'' (equal to $\kappa$ if $\kappa$ is a limit cardinal).

For small $n$ the cardinal characteristics $l^{*n}(X)$, $\bar l^{*n}(X)$, $l^{*n\kern-1pt\frac12}(X)$,
$\bar l^{*n\kern-1pt\frac12}(X)$ are equal to some well-known cardinal invariants. In particular,
$l^{*0}(X)=|X|$, $\bar l^{*0}(X)=d(X)$, $l^{*\frac12}(X)=l(X)$. The cardinal $\bar l^{*\frac12}(X)$ coincides with the {\em weak Lindel\"of number} $\bar l(X)$ of $X$ (called the {\em weak covering number} in \cite{Hod}) and $l^{*1}(X)$ equals the {\em weak extent} $l^*(X)$ of $X$.

Topological spaces $X$ with $L^{*n\kern-1pt\frac12}(X)\le\w$ (resp. $l^{*n\kern-1pt\frac12}(X)\le\w$) are called {\em $n$-star compact} (resp. {\em $n$-star Lindel\"of\/}), see \cite{DRRT}. Topological spaces $X$ with $\bar L^{*\kern-1pt n\kern-1pt\frac12}(X)\le\w$ (resp. $\bar l^{*\kern-1pt n\kern-1pt\frac12}(X)\le\w$) will be called {\em weakly $n$-star compact} (resp. {\em weakly $n$-star Lindel\"of\/}).

Completing the Hodel's diagram with the cardinal characteristics $l^{*n}$, $l^{*n\kern-1pt\frac12}$, $\bar l^{*n}$ and $\bar l^{*n\kern-1pt\frac12}$ we get the following diagram in which we assume that $n\ge 2$:
{
$$\xymatrix{
l^{*\w}\ar@{=}[dd]\ar[r]&\cdots\ar[r]&l^{*(n{+}1)}\ar[r]\ar[rdd]&l^{*(n{+}\frac12)}\ar[r]\ar[rdd]&l^{*n}\ar[r]
&\cdots\ar[r]&l^{*1}\ar[r]\ar[rrdd]&de\ar[r]\ar[rd]&l^{*\kern-1pt\frac12}=l\ar[r]&hl\ar[d]\\
&&&&&dc\ar[rru]\ar[rrd]&&&s\ar[r]\ar[ru]\ar[rd]&nw\\
\bar l^{*\w}\ar[r]&\cdots\ar[r]&\bar l^{*(n{+}\frac12)}\ar[r]\ar[ruu]&\bar l^{*n}\ar[r]\ar[ruu]&\bar l^{*(n{-}\frac12)}\ar[r]\ar[ru]&\cdots\ar[r]&\bar l^{*\frac12}\ar[r]\ar[rruu]&c\ar[r]\ar[ru]&\bar l^{*0}=d\ar[r]&hd\ar[u]
}
$$
}
Two non-trivial inequalities $l^{*1}\le de$ and $\bar l^{*1\kern-1pt \frac12}\le dc$ in this diagram are proved in the following proposition, in which for a topological space $X$ by $ld(X)$ we denote the {\em local density} of $X$, equal to the smallest cardinal $\kappa$ such that each point $x\in X$ has a neighborhood $O_x\subset X$ of density $d(O_x)\le\kappa$.

\begin{proposition}\label{p1.1} Each topological space $X$ has $$l^{*1}(X)\le de(X),\;\;\bar l^{*1\kern-1pt\frac12}(X)\le \dc(X) \mbox{ \ and \  }\bar l^{*1}(X)\le \dc(X)\cdot ld(X).$$
\end{proposition}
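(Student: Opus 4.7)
All three inequalities are proved by Zorn's lemma arguments: in each case, extract a maximal family that is discrete in the appropriate sense and exploit the failure of maximality.

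For $l^{*1}(X)\le de(X)$, pick $A\subseteq X$ maximal with the property that the family of singletons $\{\{a\}:a\in A\}$ is discrete in $X$, so $|A|\le de(X)$. Given an open cover $\U$ and $y\in X\setminus A$, maximality forces $A\cup\{y\}$ to fail to be a discrete family of singletons, so some witness $z\in X$ has every open neighborhood meeting at least two of those singletons. The discreteness of $A$ provides an open neighborhood $N$ of $z$ with $N\cap A$ a single point $a_0$, and then every open $N'\subseteq N$ containing $z$ must contain both $y$ and $a_0$ (to ensure $|N'\cap(A\cup\{y\})|\ge 2$). Picking any $U\in\U$ with $z\in U$, the open set $U\cap N$ is such a neighborhood, so $y,a_0\in U\cap N\subseteq U$, giving $y\in U$ and $U\cap A\ni a_0$; hence $y\in\St^1(A;\U)$. (If $y\in A$, any $U\in\U$ containing $y$ directly witnesses $y\in\St^1(A;\U)$.)

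For $\bar l^{*1\frac12}(X)\le\dc(X)$, the analogous choice is a maximal $\V\subseteq\U$ such that $\V$ is a discrete family of open sets, so $|\V|\le\dc(X)$. Suppose the open set $P=X\setminus\overline{\St^1(\cup\V;\U)}$ is nonempty; pick $y\in P$ and $V_y\in\U$ with $y\in V_y$. Then $V_y\cap\cup\V=\emptyset$ (else $V_y\subseteq\St^1(\cup\V;\U)$ would place $y$ in $\overline{\St^1(\cup\V;\U)}$, contradicting $y\in P$). Applying maximality to $\V\cup\{V_y\}$ yields a non-discreteness witness $z$ and some $V_0\in\V$ such that every open neighborhood of $z$ inside an initial neighborhood $N$ (meeting $\V$ only at $V_0$) meets both $V_y$ and $V_0$. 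For any $U\in\U$ with $z\in U$, the set $U\cap N$ is such a neighborhood, so $U\cap V_0\ne\emptyset$ forces $U\subseteq\St^1(\cup\V;\U)$, and also $U\cap V_y\ne\emptyset$, yielding $V_y\cap\St^1(\cup\V;\U)\ne\emptyset$. Converting this into the desired contradiction $y\in\overline{\St^1(\cup\V;\U)}$ requires either choosing $V_y\subseteq P$ (in which case $U\cap V_y\subseteq P\cap\St^1(\cup\V;\U)=\emptyset$ directly) or working inside the subspace $P$ with the trace cover $\{U\cap P:U\in\U,\ U\cap P\ne\emptyset\}$ and lifting a maximal discrete subfamily back to $\U$.

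For $\bar l^{*1}(X)\le\dc(X)\cdot ld(X)$, combine the above with local density: for $\V$ as in Part~2 and each $V\in\V$, pick $x_V\in V$ together with an open neighborhood $O_{x_V}\subseteq V$ of density $d(O_{x_V})\le ld(X)$, and a dense subset $D_V\subseteq O_{x_V}$ of cardinality at most $ld(X)$. Setting $A=\bigcup_{V\in\V}D_V$ gives $|A|\le\dc(X)\cdot ld(X)$, and since $D_V$ is dense in the open set $O_{x_V}$, every open $U\in\U$ meets $D_V$ iff it meets $O_{x_V}$; thus $\St^1(A;\U)$ coincides with $\bigcup\{U\in\U:U\cap O_{x_V}\ne\emptyset\text{ for some }V\in\V\}$. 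Density follows from the Part~2 argument applied with $\{O_{x_V}:V\in\V\}$ in place of $\V$ (each $O_{x_V}\subseteq V\in\V$, so discreteness is preserved).

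The main technical obstacle is the transfer step in Part~2: though the non-discreteness witness $z$ is easily placed in $\St^1(\cup\V;\U)$, the original point $y\in P$ need not be close to $z$, so one must carefully exploit the openness of $V_y$ and $P$ and the position of $V_y$ relative to $\overline{\St^1(\cup\V;\U)}$. Realizing $P$ as a subspace and repeating the maximal-discrete construction inside $P$, then lifting back to $\U$, appears to be the cleanest way to close the gap.
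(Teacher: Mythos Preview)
There is a genuine gap common to all three parts: your applications of Zorn's lemma are invalid because the property ``discrete family in $X$'' is not closed under unions of chains, and a maximal such family need not exist. In $X=\mathbb{R}$ the sets $A_n=\{1/k:1\le k\le n\}$ form an increasing chain of sets whose singleton-families are discrete, yet the union $\{1/k:k\ge1\}$ fails discreteness at $0$; in fact any closed discrete $A\subset\mathbb{R}$ can be properly extended by adjoining any $y\notin A$ (the $T_1$ property lets you separate $y$ from each $a\in A$), so there is \emph{no} maximal $A$ at all. The chain $\V_n=\{(\tfrac1{k+1},\tfrac1k):1\le k\le n\}$ exhibits the same obstruction for your maximal discrete $\V\subset\U$ in Part~2.

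The paper avoids this by making the maximality condition \emph{pairwise} and tied to the fixed cover $\U$, so that it is trivially chain-closed. For Part~1 it takes $A$ maximal subject to $a\notin\St(b;\U)$ for distinct $a,b\in A$; for Parts~2 and~3 it takes a family $\V$ of nonempty open sets (each of density $\le ld(X)$ and contained in some $U_V\in\U$) maximal subject to being \emph{$\U$-separated}, meaning $\St(V;\U)\cap V'=\emptyset$ for distinct $V,V'\in\V$. Discreteness is then a \emph{consequence} --- any $U\in\U$ containing a given point meets at most one member of the family --- giving $|A|\le de(X)$ and $|\V|\le dc(X)$. Maximality yields $X=\St(A;\U)$ and $\overline{\St(\cup\V;\U)}=X$ directly, with no ``transfer step'': if a nonempty open $P$ missed $\overline{\St(\cup\V;\U)}$, one could adjoin a small-density open $V_0\subset P\cap U$ (for some $U\in\U$) to $\V$, and $\U$-separation of $\V\cup\{V_0\}$ is automatic from $V_0\cap\St(\cup\V;\U)=\emptyset$. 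Part~3 then follows by picking dense $D_V\subset V$ with $|D_V|\le ld(X)$ and noting $\St(\bigcup_{V\in\V} D_V;\U)=\St(\cup\V;\U)$, since each $D_V$ is dense in the open set $V$.
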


\begin{proof} To prove that  $l^{*1}(X)\le de(X)$, take any open cover $\U$ of $X$ and using Zorn's Lemma, choose a maximal subset $A\subset X$ such that $a\notin \St(b;\U)$ for any distinct points $a,b\in A$. We claim that the family of singletons $\big\{\{a\}:a\in A\big\}$ is discrete in $X$. Indeed, assuming that for some point $x\in X$ each neighborhood $O_x\subset X$ of $x$ contains two distinct points $a,b\in A$, we can take any set $U\in\U$ containing $x$, find two distinct points $a,b\in A\cap U$ and conclude that $b\in\St(a;\U)$, which contradicts the choice of the set $A$. So, the family $\big\{\{a\}:a\in A\big\}$ is discrete and hence $|A|\le de(X)$. By the maximality of $A$, for every $x\in X$ there is a point $a\in A$ such that $x\in \St(a;\U)$, which implies $X=\St(A;\U)$. This witnesses that $l^{*1}(X)\le de(X)$.
\smallskip

To prove that $\bar l^{*1\kern-1pt\frac12}(X)\le\dc(X)$ take any open cover $\U$ of $X$. Define a family $\V$ of open sets in $X$ to be {\em $\U$-separated} if
$\St(V;\U)\cap V'=\emptyset$ for any distinct sets $V,V'\in\V$. Using Zorn's Lemma, choose a maximal $\U$-separated family $\V$ of non-empty open sets in $X$ such that each set $V\in\V$ has density $d(V)\le ld(X)$ and is contained in some set $U_V\in\U$. By the maximality of $\V$ the set $\St(\cup\V;\U)$ is dense in $X$. The $\U$-separated property of $\V$ implies that the family $\V$ is discrete in $X$ (more precisely, each point $x\in X$ has a neighborhood $U\in\U$ meeting at most one set $V\in\V$) and hence $|\V|\le\dc(X)$. The family $\U'=\{U_V:V\in\V\}$ has cardinality $|\U'|\le|\V|\le\dc(X)$ and $\overline{\St(\cup\U';\U)}=X$ witnessing that $\bar l^{*1\kern-1pt\frac12}(X)\le\dc(X)$.

In each set $V\in\V$ fix a dense subset $D_V$ of cardinality $|D_V|=d(V)\le ld(X)$. Then the set $D=\bigcup_{V\in\V}D_V$ has cardinality $|D|\le|\V|\cdot ld(X)\le \dc(X)\cdot ld(X)$ and $\overline{\St(D;\U)}=\overline{\St(\bigcup\V;\U)}=X$, witnessing that $\bar l^{*1}(X)\le \dc(X)\cdot ld(X)$.
\end{proof}

Now we detect  topological spaces for which some of the cardinal characteristics $l^{*n}$, $\bar l^{*n}$, $n\in\frac12\IN$, coincide.

We recall that a topological space $X$ is called
\begin{itemize}
\item {\em quasi-regular} if every non-empty open set $U\subset X$ contains the closure $\overline{V}$ of another non-empty open set $V\subset U$;
\item {\em collectively normal} (resp. {\em collectively Hausdorff}) if for each discrete family $\F$ of (finite) subsets of $X$ there is a discrete family $(U_F)_{F\in\F}$ of open sets such that $F\subset U_F$ for all $F\in\F$;
\item {\em locally separable} if each point $x\in X$ has a separable neighborhood;
\item a {\em Moore space} if $X$ is a regular $T_1$-space  possessing a sequence of open covers $(\U_n)_{n\in\w}$ such that the family $\{\St(x;\U_n)\}_{n\in\w}$ is a neighborhood base at each point $x\in X$.
\end{itemize}
By Theorem 1.2 of \cite{Grue}, a topological space $X$ is metrizable if and only if $X$ is a collectively normal Moore space.

\begin{proposition}\label{p1.2} Let $X$ be a quasi-regular space. Then
\begin{enumerate}
\item $\dc(X)=\bar l^{*1\kern-1pt\frac12}(X)=l^{*\w}(X)$.
\item If $X$ is normal or locally separable, then $\dc(X)=\bar l^{*1}(X)$.
\item If $X$ is perfectly normal, then $\dc(X)=c(X)=\bar l^{*\kern-1pt\frac12}(X)$.
\item If $X$ is collectively Hausdorff, then $\dc(X)=de(X)=\bar l^{*1}(X)$.
\item If $X$ is paracompact, then $\dc(X)=l(X)$.
\item If $X$ is perfectly paracompact, then $\dc(X)=hl(X)$.
\item If $X$ is a Moore space, then $l^{*1}(X)=d(X)$ and $e(X)=de(X)=hd(X)$.
\end{enumerate}
\end{proposition}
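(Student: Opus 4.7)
My plan is to prove the seven parts in cascade: Part~(1) is the backbone, Parts~(2)--(4) follow from it with short additional arguments, and Parts~(5)--(7) require separate topological reasoning.

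For Part~(1), I aim to establish the chain $\dc(X)\le l^{*\w}(X)\le\bar l^{*1\frac12}(X)\le\dc(X)$, whose rightmost inequality is Proposition~\ref{p1.1}. For the middle inequality I use that the closure-inclusion $\overline{\St^n(A;\U)}\subset\St^{n+1}(A;\U)$ (valid for every open cover $\U$) upgrades closure-star bounds into star bounds: starting from a subfamily $\V$ witnessing $\bar l^{*1\frac12}(X)\le\kappa$, a transversal $A=\{a_V:V\in\V\}$ yields $\St(A;\U)\supset\cup\V$, hence $\overline{\St^2(A;\U)}=X$ and so $\St^3(A;\U)=X$, giving $l^{*\w}(X)\le l^{*3}(X)\le\kappa$. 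The substantive step is $\dc(X)\le l^{*n}(X)$ for every $n\in\w$. Given a discrete family $\W$ of non-empty open sets and an integer $n\ge 1$, I iterate quasi-regularity to pick for each $W\in\W$ a strictly decreasing chain
\[
W=V^0_W\supsetneq\overline{V^1_W}\supset V^1_W\supsetneq\overline{V^2_W}\supset\cdots\supsetneq\overline{V^{n+1}_W}\supset V^{n+1}_W
\]
of non-empty open sets, set $V^{n+2}_W:=\emptyset$, and define the open ``shells'' $U^i_W:=V^{i-1}_W\setminus\overline{V^{i+1}_W}$ for $i=1,\dots,n+1$. Discreteness of $\W$ makes $\bigcup_W\overline{V^1_W}$ closed, so $X_0:=X\setminus\bigcup_W\overline{V^1_W}$ is open; together with the shells it is an open cover $\U$ of $X$. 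A direct check shows $U^i_W\cap U^j_{W'}=\emptyset$ for $W\ne W'$, $U^i_W\cap U^j_W=\emptyset$ for $|i-j|\ge 2$, and $U^i_W\cap X_0=\emptyset$ for $i\ge 2$. Hence the ``touching graph'' of $\U$ contains, for each $W$, an arm $U^{n+1}_W-U^n_W-\cdots-U^1_W-X_0$ with no other outgoing edges, and $x_W\in V^{n+1}_W\subset U^{n+1}_W$ belongs only to $U^{n+1}_W$. An induction on $k\le n$ yields $\St^k(x_W;\U)=U^{n+2-k}_W\cup\cdots\cup U^{n+1}_W\subset V^{n+1-k}_W\subset W$, so $\St^n(A;\U)=X$ forces every $x_W$ to satisfy $a\in\St^n(x_W;\U)\subset W$ for some $a\in A$; disjointness of the $W$'s yields $|A|\ge|\W|$.

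Parts~(2)--(4) reduce to the reverse of $\dc(X)\le\bar l^{*1}(X)$ (which holds in the quasi-regular case by Part~(1) and $\bar l^{*1\frac12}\le\bar l^{*1}$). For locally separable $X$, Proposition~\ref{p1.1}'s bound $\bar l^{*1}\le\dc\cdot ld$ with $ld(X)\le\aleph_0$ gives $\bar l^{*1}(X)\le\dc(X)$. For normal $X$ I adapt the Zorn construction of Proposition~\ref{p1.1} by using normality to arrange each selected $V$ with $\overline{V}\subset U_V\in\U$, so the transversal $A=\{a_V\}$ has $\St(A;\U)\supset\bigcup_V U_V$ which is dense by the strengthened maximality. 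Part~(3) combines the classical identity $c(X)=\dc(X)$ in perfectly normal spaces (via the $F_\sigma$ decomposition of each open set into countably many discrete subfamilies) with the Šapirovskii-style $c(X)=\bar l(X)=\bar l^{*\frac12}(X)$. For (4), collective Hausdorffness expands the singleton family $\{\{d\}:d\in D\}$ of a closed discrete $D\subset X$ to a discrete family of open neighborhoods, giving $de(X)\le\dc(X)$; together with Proposition~\ref{p1.1}'s $l^{*1}\le de$ and Part~(1) this closes the loop.

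For (5), paracompactness refines any open cover to a locally finite open refinement of the same cardinality, and a closure-preserving shrinking in the paracompact-normal setting (implied by quasi-regularity plus paracompactness) yields a discrete refinement of the same size, giving $l(X)\le\dc(X)$; the converse is immediate. Perfect paracompactness is hereditary, so (6) follows by applying (5) to every subspace. For (7), the defining sequence $(\U_k)_{k\in\w}$ of the Moore space $X$ ensures that $\St(A;\U_k)=X$ forces $A$ to meet every basic neighborhood $\St(x;\U_k)$, hence $A$ is dense and $d(X)\le l^{*1}(X)$; the equalities $e=de=hd$ follow because the Moore structure simultaneously witnesses that every discrete family yields a discrete subspace of the same cardinality and every dense subset of a subspace yields a closed discrete family of the same size. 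The main obstacle will be the iterated quasi-regularity in Part~(1), specifically guaranteeing strict inclusions $\overline{V^{i+1}_W}\subsetneq V^i_W$ at every step so that the shells are non-empty and the touching graph truly has the desired ``star of arms'' shape; a secondary difficulty is the normal-case adaptation in (2).
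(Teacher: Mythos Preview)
Your outline for Part~(1) is essentially the paper's argument (nested shells built by iterated quasi-regularity, then a star-counting in the resulting cover), and the worry about \emph{strict} inclusions is unfounded: even if some $V^i_W$ coincide, only the containments $\overline{V^{i+1}_W}\subset V^i_W$ are used to verify that $x_W$ lies solely in $U^{n+1}_W$ and that the touching graph has the star-of-arms shape.

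However, there are genuine gaps in three of the later parts.

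\textbf{Part (2), normal case.} Your ``strengthened maximality'' does not deliver what you claim. From a maximal $\U$-separated family $\V$ with $\overline V\subset U_V\in\U$ you get $\St(A;\U)\supset\bigcup_V U_V$ for a transversal $A$, but maximality only forces $\St(\cup\V;\U)$ to be dense, not $\bigcup_V U_V$; the inclusion $\bigcup_V U_V\subset\St(\cup\V;\U)$ goes the wrong way. Normality does not bridge this: you would need every $U\in\U$ meeting $V$ to contain $a_V$, which is far too strong. The paper's argument is substantively different: assuming $\dc(X)<\bar l^{*1}(X)$, it builds by transfinite induction a $\kappa^+$-sequence of points $x_\alpha$ with $x_\alpha\notin\overline{\St(\{x_\beta\}_{\beta<\alpha};\U)}$, shows the closures $\bar x_\alpha$ form a discrete closed family, and \emph{then} invokes normality to separate $\bigcup_\alpha\bar x_\alpha$ from the complement of $\bigcup_\alpha W_\alpha$, producing a discrete open family of size $\kappa^+$.

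\textbf{Part (5).} A paracompact space need not admit a \emph{discrete} open refinement of a given cover. If you meant $\sigma$-discrete (a standard characterisation of paracompactness in regular spaces), that does give $l(X)\le\aleph_0\cdot\dc(X)=\dc(X)$ and is a legitimate alternative to the paper's route via iterated star-refinements and a Zorn argument; but as written, ``discrete refinement of the same size'' is false.

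\textbf{Part (6).} Applying (5) hereditarily does not work: even granting heredity of perfect paracompactness, you would obtain $hl(X)=\sup_Y l(Y)=\sup_Y\dc(Y)$, and $\dc$ is \emph{not} monotone under passing to subspaces (a large discrete subspace $D$ has $\dc(D)=|D|$). The paper instead uses the standard fact that in a perfect space every open set is $F_\sigma$, whence $hl(X)=l(X)$, and then invokes (5) once.
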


\begin{proof} 1. Proposition~\ref{p1.1} implies that $l^{*\w}(X)=\bar l^{*\w}(X)\le \bar l^{*\kern-1pt 1\kern-1pt\frac12}(X)\le dc(X)$. To prove that these inequalities turn into equalities, it suffices to check that $\dc(X)\le l^{*n\kern-1pt\frac12}(X)$ for every $n\in\IN$. Assuming that $l^{*n\kern-1pt\frac12}(X)<\dc(X)$ for some $n\in\IN$, find a discrete family $\V$ of cardinality $|\V|> l^{*n\kern-1pt\frac12}(X)$ consisting of non-empty open sets in $X$. Using the quasi-regularity of $X$, for every $V\in\V$ choose a sequence of non-empty open sets $(V_i)_{i=0}^{n+1}$ such that $\overline{V_i}\subset V_j$ for every $i<j\le n+1$ and ${V}_{n+1}=V$.
Taking into account that the family $\V$ is discrete, we conclude that the set $W=X\setminus \bigcup_{V\in\V}\overline{V}_{n}$ is open in $X$. By definition of $l^{*n\kern-1pt\frac12}(X)$, for the open cover $$\W=\{W\}\cup\{V_1:V\in\V\}\cup\bigcup_{i=1}^{n}\{V_{i+1}\setminus\overline{V}_{i-1}:V\in\V\}$$ of
 $X$ there is a subfamily $\W'\subset\W$ of cardinality $|\W'|\le l^{*n\kern-1pt\frac12}(X)$ such that $X=\St^n(\cup\W';\W)$. Replacing $\W'$ by a larger subfamily, we can assume that $\W'=\{W\}\cup\{V_1:V\in\V'\}\cup\bigcup_{i=1}^{n}\{V_{i+1}\setminus\overline{V}_{i-1}:V\in\V'\}$ for some subfamily $\V'\subset\V$ of cardinality $|\V'|\le |\W'|\le l^{*n\kern-1pt\frac12}(X)<|\V|$. Choose any set $V\in\V\setminus\V'$ and observe that $V\cap \bigcup_{W'\in\W'}\St^n(W';\W)\subset \St^n(V\cap W;\W)\subset V\setminus \overline{V}_0\ne V$, which contradicts $X=\St^n(\cup\W';\W)$.
This contradiction completes the proof of the inequality $\dc(X)\le \min_{n\in\IN}l^{*n\kern-1pt\frac12}(X)=l^{*\w}(X)=\bar l^{*\w}(X)$.

The inequalities $l^{*\w}(X)\le \bar l^{*1\kern-1pt\frac12}(X)\le \dc(X)\le l^{*\w}(X)$ imply that $\bar l^{*1\kern-1pt \frac12}(X)=\dc(X)=l^{*\w}(X)$.
\smallskip

2. If the space $X$ is locally separable, then $\bar l^{*1}(X)\le \dc(X)$ by Proposition~\ref{p1.1}. Combined with the equality $\dc(X)=\bar l^{*1\kern-1pt\frac12}(X)\le \bar l^{*1}(X)$ proved in the preceding item, this yields the required equality $\bar l^{*1}(X)=\dc(X)$.
\smallskip

Next, assume that the space $X$ is normal. By the preceding item, $dc(X)=\bar l^{*\kern-1pt 1\kern-1pt\frac12}(X)\le \bar l^{*1}(X)$. To derive a contradiction, assume that  $\dc(X)<\bar l^{*\kern-1pt 1}(X)$. Then we can find an open cover $\U$ of $X$ such that for any subset $A\subset X$ of cardinality $|A|\le dc(X)$ the set $\St(A;\U)$ is not dense in $X$. Let $\kappa=\dc(X)$. By transfinite induction we can use the quasi-regularity of $X$ and construct a transfinite sequence of open sets $(V_\alpha)_{\alpha<\kappa^+}\subset X$ and points $(x_\alpha)_{\alpha<\kappa^+}$ in $X$ such that $x_\alpha\in V_\alpha\subset\overline{V}_\alpha\subset X\setminus \overline{\St(\{x_\beta\}_{\beta<\alpha};\U)}$ for every $\alpha<\kappa^+$. For every $\alpha<\kappa$ denote by $\bar x_\alpha$ the closure of the singleton $\{x_\alpha\}$ in $X$. We claim that the family  $\{\bar x_\alpha\}_{\alpha<\kappa^+}$ is discrete in $X$.
Given any point $x\in X$, we should find a neighborhood $O_x\subset X$ of $x$ that meets at most one set $\bar x_\alpha$, $\alpha<\kappa^+$. Consider the star $\St(x;\U)$ of $x$. If this star meets no set $\bar x_\alpha$, then $O_x=\St(x;\U)$ is a required neighborhood of $x$. In the opposite case we can choose the smallest ordinal $\alpha<\kappa$ such that $\bar x_\alpha\cap\St(x;\U)\ne\emptyset$. Then $x_\alpha\in \St(x;\U)$ and hence $x\in\St(x_\alpha;\U)$. For every ordinal $\beta$ with $\alpha<\beta<\kappa^+$ the choice of the point $x_\beta$ guarantees that $x_\beta\notin \St(x_\alpha;\U)$ and hence $\bar x_\beta\cap \St(x_\alpha;\U)=\emptyset$. Then the neighborhood $O_x=\St(x;\U)\cap \St(x_\alpha,\U)$ has the required property: $O_x\cap \bar x_\beta=\emptyset$ for every ordinal $\beta\in\kappa^+\setminus\{\alpha\}$.

Therefore, $\{\bar x_\alpha\}_{\alpha<\kappa^+}$ is a discrete family of closed subsets in $X$ and hence its union $F=\bigcup_{\alpha<\kappa^+}\bar x_\alpha$ is closed in $X$. Observe that for every $\alpha<\kappa^+$ the open set  $W_\alpha=\St(x_\alpha;\U)\setminus  \overline{\St(\{x_\beta\}_{\beta<\alpha};\U)}$ contains the closed set $\bar x_\alpha\subset \overline{V}_\alpha$. Consequently, $F\subset \bigcup_{\alpha<\kappa^+}W_\alpha$. By the normality of $X$, there is an open set $U\subset X$ such that $D\subset U\subset\overline{U}\subset\bigcup_{\alpha<\kappa^+}W_\alpha$. It is easy to check that the indexed family of open sets $(U\cap W_\alpha)_{\alpha<\kappa^+}$ is discrete in $X$ and hence $\dc(X)\ge \kappa^+>\kappa=\dc(X)$, which is a desired contradiction witnessing that $\dc(X)=\bar l^{*1}(X)$.
\smallskip

3. Assuming that $X$ is a perfectly normal space, we shall prove that $c(X)=\dc(X)$. To derive a contradiction, assume that $c(X)>\dc(X)$ and find a disjoint family $\U$ of non-empty open sets in $X$ with $|\U|>\dc(X)$. For each set $U\in\U$ use the quasi-regularity of $X$ to fix a non-empty open set $V_U\subset X$ such that $\overline{V}_U\subset U$. Fix any point $x_U\in V_U$ and consider the closure $\bar x_U$ of the singleton $\{x_U\}$. Then $(\bar x_U)_{U\in\U}$ is a disjoint family of closed subsets in $X$ and its union $D=\bigcup_{U\in\U}\bar x_U$ is open in the closure $\overline{D}$ of $D$ in $X$. Since $X$ is perfectly normal, $\overline{D}\setminus D$ is a $G_\delta$-set in $X$. Consequently, $D$ is an $F_\sigma$-set, which allows us to find a closed subset $F\subset D$ in $X$ such that the family $\U'=\{U\in\U:x_U\in F\}$ has cardinality $|\U'|>\dc(X)$. Then the set $D'=\bigcup_{U\in\U'}\bar x_U\subset F$ is closed in $X$ and by normality of $X$, we can find an open set $V$ in $X$ such that $F\subset V\subset\overline{V}\subset\bigcup\U$. It can be shown that $\{V\cap U:U\in\U'\}$ is a discrete collection of open sets in $X$, which implies $\dc(X)\ge |F|>\dc(X)$. This contradiction completes the proof of the equality $\dc(X)=c(X)$. Since $\dc(X)\le\bar l^{*\frac12}(X)\le c(X)$, we get $\dc(X)=\bar l^{*\frac12}(X)=c(X)$.
\smallskip

4. Assume that the space $X$ is collectively Hausdorff. Since $\dc(X)\le l^{*\kern-1pt1}(X)\le de(X)$, it suffices to show that $de(X)\le\dc(X)$. To derive a contradiction, assume that $de(X)>\dc(X)$. Then we can find a discrete family $\F$ of non-empty subsets in $X$ of cardinality $|\F|>\dc(X)$. In each set $F\in\F$ choose a point $x_F\in F$. Since the space $X$ is collectively Hausdorff, each point $x_F$ has an open neighborhood $O(x_F)\subset X$ such that the family $\U=\{O(x_F)\}_{F\in\F}$ is discrete, which implies $\dc(X)\ge|\U|\ge|\F|>\dc(X)$. This contradiction completes the proof.
\smallskip

5. Next, assuming that the space $X$ is paracompact, we shall prove that $\dc(X)=l(X)$. Since $\dc(X)\le l(X)$, it suffices to prove that $l(X)\le \dc(X)$. Fix an open cover $\U$ of $X$. Applying Theorem 5.1.12 of \cite{Eng} three times, find an open cover $\V$ of $X$ such that for every $V\in\V$ the set $\St^3(V;\V)$ is contained in some set $U\in\U$. By Zorn's Lemma, choose a maximal subset $A\subset X$ such that $y\notin \St^3(x;\V)$ for any distinct points $x,y\in A$.
We claim that the family $(\St(x;\V))_{x\in A}$ is discrete in $X$. Assuming the converse, we would find a point $x\in X$ whose each neighborhood $O_x$ meets at least two sets of the family $(\St(a;\V))_{a\in A}$. Fix any set $V\in\V$ containing $x$ and find two distinct points $a,b\in A$ such that $V\cap\St(a;\V)\ne\emptyset\ne V\cap \St(b;\V)$. Then $b\in \St^3(a;\V)$, which contradicts the choice of the set $A$. This contradiction proves that the family $(\St(a,\V))_{a\in A}$ is discrete and hence $|A|\le\dc(X)$. By the maximality of $A$, for every point $x\in X$ there is a point $a\in A$ such that $x\in \St^3(a;\V)$. Then $X=\bigcup_{a\in A}\St^3(a;\V)$. By the choice of $\V$ for every $a\in A$ there is a set $U_a\in\U$ containing the set $\St^3(a;\V)$. Then $\{U_a:a\in A\}\subset \U$ is a subcover of
cardinality $\le |A|\le\dc(X)$, witnessing that $l(X)\le\dc(X)$.
\smallskip

6. If $X$ is perfectly paracompact, then $hl(X)=l(X)\le dc(X)$ by the preceding item.
\smallskip

7. Assume that $X$ is a Moore space and fix a sequence of open covers $(\U_n)_{n\in\w}$ of $X$ such that for every $x\in X$ the family $\{\St(x;\U_n)\}_{n\in\w}$ is a neighborhood base at $x$. For every $n\in\w$ fix a subset $A_n\subset X$ of cardinality $|A_n|\le l^{*1}(X)$ such that $X=\St(A_n;\U_n)$. Then $A=\bigcup_{n\in\w}A_n$ is a dense subset of cardinality $|A|\le l^{*1}(X)$ in $X$, witnessing that $d(X)\le l^{*1}(X)$. The reverse inequality $l^{*1}(X)\le d(X)$ holds for any (not necessarily Moore) spaces.

Next, we show that every subspace $Y\subset X$ has density $d(Y)\le e(X)$. Using Zorn's Lemma, for every $n\in\w$ fix a maximal subset $D_n\subset Y$ such that $x\notin\St(y;\U_n)$ for any distinct points $x,y\in D_n$. It follows that $D_n$ is closed discrete subspace of $X$ and hence $|D_n|\le e(X)$. Moreover, $D=\bigcup_{n\in\w}D_n$ is a dense subset of $Y$, witnessing that $hd(X)\le e(X)=de(X)$.
\end{proof}

Proposition~\ref{p1.2} implies that for quasi-regular spaces the (infinite) diagram describing the relations between the cardinal characteristics $l^{*n}$, $\bar l^{*n}$, $n\in\frac12\IN$, simplifies to the following (finite) form:
$$
\xymatrix{
&&l^{*\kern-0.5pt 2}\ar[r]\ar[rdd]&l^{*1\kern-1pt\frac12}\ar[r]\ar[rdd]&l^{*\kern-0.5pt 1}=l^*\ar[r]\ar[rrdd]&de\ar[r]\ar[rd]&l^{*\kern-1pt\frac12}=l\ar[r]&hl\ar[d]\\
\dc\ar@{=}[r]&l^{*\w}\ar@{=}[rd]\ar[ru]&&&&&s\ar[ru]\ar[rd]&nw\\
&&\bar l^{*1\kern-1pt\frac12}\ar[r]\ar[ruu]&\bar l^{*1}\ar[r]\ar[ruu]&\bar l^{*\kern-1pt\frac12}=\bar l\ar[r]\ar[rruu]&c\ar[r]\ar[ru]&\bar l^{*0}=d\ar[r]&hd\ar[u]
}
$$
\smallskip

\begin{remark} Some inequalities in this diagram can be strict. In particular, there exist:
\begin{itemize}
\item a normal space $X$ (for example, the ordinal segment $[0,\w_1)$~) with $e(X)=de(X)=\w<c(X)=l(X)$;
\item a compact Hausdorff space $X$ (for example, $[0,\w_1]$) such that $\bar l(X)=l(X)=\w<c(X)$;
\item a normal space $X$ with $l^{*1\kern-1pt\frac12}(X)=\w<l^{*1}(X)$ (see \cite{Jun});
\item a Moore space $X$ with $l^*(X)=d(X)=\w<e(X)=de(X)$ (see \cite[3.2.3.1]{DRRT});
\item a Moore space $X$ with $c(X)=\w<l^*(X)$ (see \cite[3.2.3.2]{DRRT});
\item a Moore space $X$ with $l^{*1\kern-1pt\frac12}(X)=\w<c(X)$ (see \cite[3.2.3.3]{DRRT});
\item a Moore locally separable space $X$  with $l^{*2}(X)=\bar l^{*1}(X)=\w<l^{*1\kern-1pt\frac12}(X)$ (see \cite[3.2.3.4]{DRRT});
\item a Moore space $X$ with $l^{\w}(X)=\bar l^{*1\kern-1pt\frac12}(X)=\w<l^{*2}(X)\le \bar l^{*1}(X)$ (see \cite[3.2.3.5]{DRRT});
\item a Hausdorff space $X_n$ with $l^{*n\kern-1pt\frac12}(X_n)=\w<l^{*n}(X_n)$ for every $n\in\IN$ (see \cite[3.3.1]{DRRT}).
\end{itemize} There are consistent examples of normal spaces $X$ with $l^{*1}(X)=d(X)=\w$ and $e(X)=\mathfrak c$, see \cite{LM}. On the other hand, it is not known (see \cite{BM} or \cite{LM}) if there is a ZFC-example of a normal space $X$ with $l^*(X)=\w<e(X)$.
\end{remark}

\begin{problem} Construct an example of a (Tychonoff) topological space $X$ with $\bar l^{*1\kern-1pt\frac12}(X)<\bar l^{*1}(X)$.
\end{problem}

\section{Pre-uniformities and quasi-uniformities}\label{s2}

In this section we collect the necessary preliminary information on entourages, pre-uniformities, and operations on them.

\subsection{Binary words}

In this subsection we consider some structures on the set $\{+,-\}^{<\w}=\bigcup_{n\in\w}\{+,-\}^n$ of binary words in the alphabet $\{+,-\}$. This set is a semigroup with respect to the operation of concatenation of words. The empty word is the unit of this semigroup, so $\{+,-\}^{<\w}$ is a monoid.
In fact, it is a free monoid over the set $\{+,-\}$. This monoid carries a natural partial order: for two words $v\in \{+,-\}^n\subset\{+,-\}^{<\w}$ and $w\in\{+,-\}^m\subset\{+,-\}^{<\w}$ we write $v\le w$ if there is an injective map $i:n\to m$ such that $v=w\circ i$. So, $v\le w$ if the word $v$ can be obtained from $w$ by deleting some letters.

For every $n\in\w$ define the {\em alternation words} $\pm n$ and $\mp n$ by recursion: put $\mp 0=\pm 0$ be the empty word and $\pm (n+1)=+(\mp n)$, $\mp(n+1)=-(\pm n)$ for $n\in\w$.

\subsection{Entourages}

By an {\em entourage} on a set $X$ we understand any subset $U\subset X\times X$ containing the diagonal $\Delta_X=\{(x,y)\in X\times X:x=y\}$ of $X\times X$. For an entourage $U$ on $X$, point $x\in X$ and  subset $A\subset X$ let $B(x;U)=\{y\in X:(x,y)\in U\}$ be the {\em $U$-ball} centered at $x$, and $B(A;U)=\bigcup_{a\in A}B(a;U)$ be the {\em $U$-neighborhood} of $A$ in $X$. Since $U=\bigcup_{x\in X}\{x\}\times B(x;U)$, the entourage $U$ can be recovered from the family of $U$-balls $\{B(x;U):x\in X\}$.

Now we define some operations on entourages.
For two entourages $U,V$ on $X$ let $$U^{-1}=\{(x,y)\in X\times X:(y,x)\in U\}$$ be the {\em inverse} entourage and $$UV=\{(x,z)\in X\times X:\exists y\in X\;\mbox{ such that $(x,y)\in U$ and $(y,z)\in V$}\}$$be the {\em composition} of $U$ and $V$. It is easy to see that $(UV)^{-1}=V^{-1}U^{-1}$. For every entourage $U$ on $X$ define its powers $U^n$, $n\in\IZ$, by the formula: $U^0=\Delta_X$ and $U^{n+1}=U^nU$, $U^{-n-1}=U^{-n}U^{-1}$ for $n\in\w$.

The powers $U^n$, $U^{-n}$, $n\in\IN$, of an entourage $U$ are particular cases of the verbal powers $U^v$ where $v\in \{+,-\}^{<\w}:=\bigcup_{n\in\w}\{+,-\}^n$ is a word in the two-symbol alphabet $\{+,-\}$. The verbal powers $U^v$ of an entourage $U$ on $X$ are defined by induction: for the empty word $v=\emptyset\in\{+,-\}^0$ we put $U^\emptyset=\Delta_X$ and for a word $v\in\{+,-\}^{<\w}$ we define $$U^{v+}:=U^vU,\;\;U^{+v}:=UU^v,\;\;U^{v-}=U^vU^{-1},\;\;U^{-v}:=U^{-1}U^v.$$

The following two properties of verbal products follow  immediately from the definitions:

\begin{lemma}\label{l2.1} Let $v,w\in\{+,-\}^{<\w}$ be two words in the alphabet $\{+,-\}$. Then
\begin{enumerate}
\item $U^{vw}=U^vU^{w}$.
\item If $v\le w$, then $U^v\subset U^w$.
\end{enumerate}
\end{lemma}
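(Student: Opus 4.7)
The plan is to reduce both parts to a single ``flat'' formula: for a word $v=l_1\cdots l_n$ with $l_i\in\{+,-\}$, set $U^+:=U$ and $U^-:=U^{-1}$; I claim
$$U^v=U^{l_1}U^{l_2}\cdots U^{l_n},$$
with the empty product understood as $\Delta_X$. Because composition of subsets of $X\times X$ is associative, the right-hand side is unambiguous (no parentheses needed). To establish this identity I would induct on $n=|v|$. The subtle point, and the only real content of the well-posedness check, is that for $n\ge 2$ the definition given in the excerpt admits both a left peeling rule ($U^{+v'}=UU^{v'}$ or $U^{-v'}=U^{-1}U^{v'}$) and a right peeling rule ($U^{v'+}=U^{v'}U$ or $U^{v'-}=U^{v'}U^{-1}$), and one must verify they agree. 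For $v=l_1 v'' l_n$ with $v''$ of length $n-2$, the left rule gives $U^{l_1}U^{v''l_n}$ while the right rule gives $U^{l_1 v''}U^{l_n}$; by the inductive hypothesis both equal $U^{l_1}\cdots U^{l_n}$, so the two definitions coincide and the flat formula holds for $v$.

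Granted the flat formula, part~(1) is immediate. Writing $v=l_1\cdots l_n$ and $w=m_1\cdots m_k$, the concatenation $vw$ has letter sequence $l_1,\ldots,l_n,m_1,\ldots,m_k$, hence
$$U^{vw}=U^{l_1}\cdots U^{l_n}U^{m_1}\cdots U^{m_k}=(U^{l_1}\cdots U^{l_n})(U^{m_1}\cdots U^{m_k})=U^v U^w,$$
again using associativity of composition.

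For part~(2) the key observation is monotonicity under insertion: for any entourages $A,B$ on $X$ one has $AB\supset A\Delta_X=A$ and $AB\supset \Delta_X B=B$, because every entourage contains the diagonal. Suppose $v\le w$, so $w$ is obtained from $v$ by inserting finitely many letters at chosen positions. It suffices to treat a single insertion: if $v=v_1 v_2$ and $w=v_1 l v_2$ for some $l\in\{+,-\}$, then by part~(1),
$$U^w=U^{v_1}U^l U^{v_2}\supset U^{v_1}(\Delta_X U^{v_2})=U^{v_1}U^{v_2}=U^v.$$
Iterating over the sequence of single-letter insertions that transforms $v$ into $w$ yields $U^v\subset U^w$, completing the proof.

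The argument is almost entirely bookkeeping; the only nontrivial point is the well-posedness of the recursive definition, and that is handled transparently once associativity of composition is used to reconcile the left and right peeling rules.
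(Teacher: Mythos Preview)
Your argument is correct. The paper does not actually supply a proof of this lemma at all --- it merely asserts that both statements ``follow immediately from the definitions'' --- so your proposal is not so much matching the paper's approach as filling in what the paper leaves implicit. Your flat formula $U^{l_1\cdots l_n}=U^{l_1}\cdots U^{l_n}$ is exactly the natural unpacking of the recursive definition, and your observation that the left and right peeling rules must be reconciled (via associativity of relational composition) is a genuine well-posedness point that the paper glosses over. One small remark on part~(2): you are implicitly reading the order $v\le w$ as ``$v$ is obtained from $w$ by deleting letters in place,'' i.e., the injective map $i:n\to m$ is order-preserving; the paper's formal definition says only ``injective,'' but the accompanying sentence (``$v$ can be obtained from $w$ by deleting some letters'') confirms that your reading is the intended one, and indeed the statement $U^v\subset U^w$ would fail for arbitrary injections.
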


For alternating words $\pm n$ and $\mp n$ the verbal powers $U^{\pm n}$ and $U^{\mp n}$ are called the {\em alternating powers} of $U$.

The following lemma shows that the alternating power $U^{\mp2}$ of an entourage $U$ is equivalent to taking the star with respect to the cover $\U=\{B(x;U):x\in X\}$.

\begin{lemma}\label{l2.2} For any entourage $U$ on a set $X$ and a point $x\in X$ we get $B(x;U^{-1}U)=\St(x;\U)$ where $\U=\{B(x;U):x\in X\}$. Consequently, $B(x;U^{\mp 2n})=B(x;(U^{-1}U)^n)=\St^n(x;\U)$ for every $n\in\IN$.
\end{lemma}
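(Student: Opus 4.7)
The plan is to first unpack $B(x;U^{-1}U)$ directly from the definitions of composition, of the inverse entourage, and of $U$-balls, and then bootstrap to the $n$-fold statement by an induction that uses Lemma~\ref{l2.1}(1) to identify $U^{\mp 2n}$ with $(U^{-1}U)^n$.

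For the first equality, I would observe that $z\in B(x;U^{-1}U)$ means, by definition of the composition, that there exists $y\in X$ with $(x,y)\in U^{-1}$ and $(y,z)\in U$. The first condition rewrites as $(y,x)\in U$, i.e.\ $x\in B(y;U)$, and the second as $z\in B(y;U)$. So $x$ and $z$ sit in a common member $B(y;U)$ of the cover $\U$. This is exactly the statement that $z\in\St(x;\U)$, and the chain of equivalences runs both ways, giving $B(x;U^{-1}U)=\St(x;\U)$.

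For the consequence, I would first note that the alternating word $\mp 2n$ is the $n$-fold concatenation of $-+$, so Lemma~\ref{l2.1}(1) gives $U^{\mp 2n}=(U^{-1}U)^n$; thus $B(x;U^{\mp 2n})=B(x;(U^{-1}U)^n)$. The remaining identity $B(x;(U^{-1}U)^n)=\St^n(x;\U)$ I would prove by induction on $n$. The case $n=1$ is what was just proved. For the inductive step, writing $(U^{-1}U)^{n+1}=(U^{-1}U)^n\,(U^{-1}U)$, a point $z$ lies in $B(x;(U^{-1}U)^{n+1})$ iff there is $y$ with $y\in B(x;(U^{-1}U)^n)=\St^n(x;\U)$ and $z\in B(y;U^{-1}U)=\St(y;\U)$. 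By the definition of $\St^{n+1}(x;\U)$ as the union of all members of $\U$ meeting $\St^n(x;\U)$, this condition on $z$ is equivalent to $z\in\St^{n+1}(x;\U)$, completing the induction.

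There is no real obstacle here: the whole content is a careful transcription between the relational language of entourages and the set-theoretic language of stars, plus a single invocation of Lemma~\ref{l2.1}(1). The only point that requires a moment's care is making sure the alternating word $\mp 2n$ really parses as $(-+)^n$ and not $(+-)^n$ or some off-by-one variant, so that $U^{\mp 2n}=(U^{-1}U)^n$ rather than $(UU^{-1})^n$; this is immediate from the recursive definition $\mp(k+1)=-(\pm k)$ given just above the lemma.
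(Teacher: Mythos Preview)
Your proof is correct and follows essentially the same approach as the paper: both unpack $B(x;U^{-1}U)$ via the definitions to find a common point $y$ with $x,z\in B(y;U)$, which is precisely the star condition. The paper leaves the ``consequently'' part implicit, while you spell out the induction and the use of Lemma~\ref{l2.1}(1); this is a harmless (indeed welcome) addition of detail rather than a different method.
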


\begin{proof} Observe that for any $x\in X$ and $y\in B(x;U^{-1}U)$, there is a point $z\in X$ such that $(x,z)\in U^{-1}$ and $(z,y)\in U$, which implies $x,y\in B(z;U)$ and hence $y\in\St(x;\U)$. So, $B(x;U^{-1}U)\subset \St(x;\U)$.

Now assume that $y\in\St(x;\U)$. Then $y,x\in B(z;U)$ for some $z\in X$ and hence $(x,z)\in U^{-1}$, $(z,y)\in U$, which implies $(x,y)\in U^{-1}U$. So, $\St(x;\U)\subset B(x;U^{-1}U)$.
\end{proof}

\subsection{Uniformities, quasi-uniformities, and pre-uniformities}

A family $\U$ of entourages on a set $X$ is called a {\em uniformity} on $X$ if it satisfies the following four axioms:
\begin{itemize}
\item[(U1)] for any $U\in\U$, every entourage $V\subset X\times X$ containing $U$ belongs to $\U$;
\item[(U2)] for any entourages $U,V\in\U$ there is an entourage $W\in\U$ such that $W\subset U\cap V$;
\item[(U3)] for any entourage $U\in\U$ there is an entourage $V\in\U$ such that $VV\subset U$;
\item[(U4)] for any entourage $U\in\U$ there is an entourage $V\in\U$ such that $V\subset U^{-1}$.
\end{itemize}
A family $\U$ of entourages on $X$ is called a {\em quasi-uniformity} (resp. {\em pre-uniformity\/}) on $X$ if it satisfies the axioms (U1)--(U3) (resp. (U1)--(U2)~).
So, each uniformity is a quasi-uniformity and each quasi-uniformity is a pre-uniformity. Observe that a pre-uniformity is just a filter of entourages on $X$.

A subfamily $\mathcal B\subset\U$ is called a {\em base} of a pre-uniformity $\U$ on $X$ if each entourage $U\in\U$ contains some entourage $B\in\mathcal B$. Each base of a pre-uniformity satisfies the axiom (U2). Conversely, each family  $\mathcal B$ of entourages on $X$ satisfying the axiom $(U2)$ is a base of a unique pre-uniformity $\langle\mathcal B\rangle$ consisting of entourages $U\subset X\times X$ containing some entourage $B\in\mathcal B$. If the base $\mathcal B$ satisfies the axiom (U3) (and (U4)), then the pre-uniformity $\langle\mathcal B\rangle$ is a quasi-uniformity (and a uniformity).

Next we define some operations over pre-uniformities.
Given two pre-uniformities $\U,\V$ on a set $X$ put $$\U^{-1}=\{U^{-1}:U\in\U\},\;\;\U\vee\V=\{U\cap V:U\in\U,\;V\in\V\},\;\;\U\wedge\V=\{U\cup V:U\in\U,\;V\in\V\}$$ and let $\U\V$ be the pre-uniformity generated by the base $\{UV:U\in\U,\;V\in\V\}$. Observe that the family of pre-uniformities on a set $X$ endowed with the inclusion order is a (complete) lattice with $\U\vee\V$ and $\U\wedge\V$ being the operations of supremum and infimum in this lattice.

For any pre-uniformity $\U$ on a set $X$ and a binary word $v\in\{+,-\}^{<\w}$ let $\U^v$ be the pre-uniformity on $X$ generated by the base $\{U^v:U\in\U\}$.

Observe that a pre-uniformity $\U$ on a set is a quasi-uniformity (and a uniformity) if and only if
$\U\U=\U$ (and $\U^{-1}=\U$). This fact implies:

\begin{proposition} For any non-empty binary word $v\in\{+,-\}^{<\w}$ and any quasi-uniformity $\U$ the pre-uniformity $\U^v$ is equal to $\U^{\pm n}$ or $\U^{\mp  n}$ for some $n\in\w$. If $\U$ is a uniformity, then $\U^v=\U$.
\end{proposition}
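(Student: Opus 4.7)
The strategy is to reduce the word $v$ by collapsing consecutive occurrences of the same letter, then to observe that an alternating non-empty word is by definition $\pm n$ or $\mp n$.

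First I would verify the \emph{multiplicativity} of verbal powers at the level of pre-uniformities: $\U^{vw}=\U^v\,\U^w$ for any words $v,w\in\{+,-\}^{<\w}$. The inclusion $\U^{vw}\subset\U^v\U^w$ is immediate from Lemma~\ref{l2.1}(1), since $U^{vw}=U^vU^w$ already lies in the generating base $\{U^vV^w:U,V\in\U\}$ of $\U^v\U^w$. For the reverse, given $U,V\in\U$, put $W:=U\cap V\in\U$; a straightforward induction on the length of $v$ shows that $W\subset U$ implies $W^v\subset U^v$ (and similarly $W^w\subset V^w$). Hence $W^{vw}=W^vW^w\subset U^vV^w$, so $U^vV^w\in\U^{vw}$.

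Next I would establish the collapse identities $\U^{++}=\U^{+}=\U$ and $\U^{--}=\U^{-}=\U^{-1}$. The equalities $\U^{+}=\U$ and $\U^{-}=\U^{-1}$ are immediate from the definition of verbal power. The axiom (U3) implies $\U\supset\U\U$ (each $U\in\U$ contains some $VV$ with $V\in\U$, hence $U\in\U\U$), while $UV\supset U\Delta_X=U$ together with (U1) yields $\U\U\subset\U$. Thus $\U^{++}=\U\U=\U$. The argument transfers to $\U^{-1}$, which is itself a quasi-uniformity (inverting $VV\subset U$ gives $V^{-1}V^{-1}\subset U^{-1}$), so $\U^{--}=\U^{-1}\U^{-1}=\U^{-1}$.

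Combining multiplicativity with these identities yields the reduction rules
\begin{equation*}
\U^{v_1{++}v_2}=\U^{v_1{+}v_2}\qquad\text{and}\qquad\U^{v_1{--}v_2}=\U^{v_1{-}v_2}.
\end{equation*}
Iterating them, any non-empty word $v$ reduces to a non-empty alternating word, which is exactly $\pm n$ or $\mp n$ for some $n\in\IN\subset\w$. For the second assertion, if $\U$ is a uniformity then additionally $\U^{-}=\U^{-1}=\U=\U^{+}$, so each letter of $v$ produces the same pre-uniformity $\U$, and repeated application of the collapse rules reduces $v$ to a single $+$, giving $\U^v=\U^{+}=\U$. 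The only genuinely technical point is the multiplicativity step, which requires the filter property via the intersection trick $W=U\cap V$ to match the two a priori different bases.
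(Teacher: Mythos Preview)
Your argument is correct and is precisely the expansion of what the paper intends: immediately before the proposition the paper observes that a pre-uniformity $\U$ is a quasi-uniformity (respectively, a uniformity) if and only if $\U\U=\U$ (and $\U^{-1}=\U$), and states that ``this fact implies'' the proposition without further detail. Your multiplicativity step $\U^{vw}=\U^v\U^w$ (via the intersection $W=U\cap V$) together with the collapse identities $\U^{++}=\U$ and $\U^{--}=\U^{-1}$ is exactly the intended way to unpack that implication.
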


For any pre-uniformity $\U$ and every $n\in\IN$ consider another two pre-uniformities: $$\U^{\wedge n}:=\U^{\pm n}\wedge\U^{\mp n}\mbox{ \ and \ }\U^{\vee n}:=\U^{\pm n}\vee\U^{\mp n}.$$ Observe that these pre-uniformities fit into the following diagram (in which an arrow $\V\to\W$ indicates that $\V\subset\W$):
$$
\xymatrix{
&&\U^{\pm n}\ar[rd]\\
\U^{\vee(n+1)}\ar[r]&\U^{\wedge n}\ar[ru]\ar[rd]&&\U^{\vee n}\ar[r]&\U^{\wedge(n-1)}\\
&&\U^{\mp n}\ar[ru]
}
$$

\begin{proposition} Let $\U$ be a quasi-uniformity on a set $X$. If  $\U^{\pm n}=\U^{\mp n}$ for some $n\in\IN$, then the pre-uniformity  $\U^{\pm n}=\U^{\mp n}$ is a uniformity and $\U^{\pm n}=\U^{\pm m}=\U^{\mp m}=\U^{\vee m}=\U^{\wedge m}$ for every $m\ge n$.
\end{proposition}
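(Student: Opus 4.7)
The plan is to reduce everything to one combinatorial fact about quasi-uniformities: adjacent identical letters in a binary word can be collapsed without changing the corresponding pre-uniformity. Explicitly, I would first establish the \emph{reduction rule}
$$\U^{v_1\epsilon\epsilon v_2}=\U^{v_1\epsilon v_2}\qquad(\epsilon\in\{+,-\},\ v_1,v_2\in\{+,-\}^{<\w}).$$
The inclusion $\U^{v_1\epsilon\epsilon v_2}\subset\U^{v_1\epsilon v_2}$ is automatic: since $\Delta_X\subset V^\epsilon$ one has $V^\epsilon V^\epsilon\supset V^\epsilon$, so by monotonicity of composition $V^{v_1\epsilon\epsilon v_2}\supset V^{v_1\epsilon v_2}$. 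The reverse inclusion uses the quasi-uniform axiom (U3) applied to $\U$ or, dually, to $\U^{-1}$ (which is also a quasi-uniformity): given $V\in\U$, choose $W\in\U$ with $W\subset V$ and $W^\epsilon W^\epsilon\subset V^\epsilon$, so that $W^{v_1\epsilon\epsilon v_2}\subset V^{v_1\epsilon v_2}$.

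Next, I would prove by induction on $m\geq n$ that $\U^{\pm m}=\U^{\mp m}=\U^{\pm n}$. The base case is the hypothesis. For the step, the alternating word gains one parity-dependent letter: if $m$ is even, $\pm(m{+}1)=(\pm m)\cdot{+}$ and $\mp(m{+}1)=(\mp m)\cdot{-}$, while if $m$ is odd these choices swap. By Lemma~\ref{l2.1}(1) this gives $\U^{\pm(m+1)}=\U^{\pm m}\,\U^{\delta}$ for the appropriate $\delta\in\{+,-\}$. Substituting $\U^{\pm m}=\U^{\mp m}$ from the inductive hypothesis creates a double letter at the junction, because $\mp m$ ends with the \emph{opposite} letter from $\pm m$; the reduction rule then absorbs this double letter, yielding $\U^{\pm(m+1)}=\U^{\mp m}=\U^{\pm n}$. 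The symmetric computation handles $\U^{\mp(m+1)}$ and closes the induction.

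The remainder is bookkeeping. Set $\V=\U^{\pm n}$. The composition $\V\V$ is generated by $\{W^{\pm n}W^{\pm n}:W\in\U\}$, and the concatenation $(\pm n)(\pm n)$ equals $\pm(2n)$ for even $n$ or, after a single application of the reduction rule at the junction, $\pm(2n-1)$ for odd $n$. In either case $\V\V=\U^{\pm k}$ with $k\geq n$, which by the induction above equals $\V$. For symmetry, a routine induction on word length using $(AB)^{-1}=B^{-1}A^{-1}$ yields $(U^v)^{-1}=U^{v^*}$, where $v^*$ is $v$ reversed with each letter flipped. One checks $(\pm n)^*=\mp n$ for odd $n$ and $(\pm n)^*=\pm n$ for even $n$, so by hypothesis $\V^{-1}=\V$ in both cases; hence $\V$ is a uniformity. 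Finally, for every $m\geq n$ the equality $\U^{\pm m}=\U^{\mp m}=\V$ gives $\U^{\vee m}=\V\vee\V=\V$ and $\U^{\wedge m}=\V\wedge\V=\V$.

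The one delicate point is the parity case analysis in the induction, since whether the appended letter creates a double letter after the swap $\U^{\pm m}\leftrightarrow\U^{\mp m}$ depends on the parity of $m$; both cases must be worked out to keep the induction going. Everything else is a mechanical consequence of the reduction rule, the concatenation identity in Lemma~\ref{l2.1}(1), and the monotonicity from Lemma~\ref{l2.1}(2).
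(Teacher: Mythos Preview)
Your proof is correct and takes a genuinely different route from the paper's.

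The paper proves the inclusion $\U^{\pm n}\subset\U^{\pm m}$ directly by induction on $m\ge n$, and the inductive step is a four-way case analysis on the parities of $m$ and $n$: given $U\in\U$ one first picks $\widetilde U\in\U$ with $\widetilde U^2\subset U$ and $\widetilde U^{\mp n}\subset U^{\pm n}$, then picks $V\subset\widetilde U$ with $V^{\pm(m-1)}\subset\widetilde U^{\mp n}\cap\widetilde U^{\pm n}$, and finally checks $V^{\pm m}\subset U^{\pm n}$ case by case. The argument works but the role of the quasi-uniform axiom~(U3) is somewhat hidden inside the case split.

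Your approach isolates that role into a single clean lemma, the reduction rule $\U^{v_1\epsilon\epsilon v_2}=\U^{v_1\epsilon v_2}$, whose proof uses (U3) exactly once (the case $\epsilon=-$ reducing to the case $\epsilon=+$ via $W^{-1}W^{-1}=(WW)^{-1}$). After that, the entire argument becomes word combinatorics: swapping $\U^{\pm m}$ for $\U^{\mp m}$ manufactures an adjacent double letter, which the rule absorbs. The parity bookkeeping is still present, but it is localized to checking which last letter $\pm m$ and $\mp m$ carry, rather than spread across a nested case split. What your approach buys is modularity and transparency; what the paper's approach buys is that it avoids introducing an auxiliary lemma and proceeds in one pass. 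Both are complete; yours is arguably tidier.
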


\begin{proof} Assume that $\U^{\pm n}=\U^{\mp n}$ for some $n\in\IN$. Taking into account that $(\U^{\pm n})^{-1}\in\{\U^{\pm n},\U^{\mp n}\}=\{\U^{\pm n}\}$, we conclude that $(\U^{\pm n})^{-1}=\U^{\pm n}$.
Next, we prove that $\U^{\pm n}\subset\U^{\pm m}$ for every $m\ge n$. This will be done by induction on $m\ge n$. For $m=n$ the inclusion $\U^{\pm n}\subset\U^{\pm m}$ is trivial. Assume that for some $m>n$ we have proved that $\U^{\pm n}\subset\U^{\pm (m-1)}$. The inclusion $\U^{\pm n}\subset\U^{\pm m}$ will be proved as soon as for every entourage $U\in\U$ we find an entourage $V\in\U$ such that
$V^{\pm m}\subset U^{\pm n}$. Since $\U^{\pm n}\subset\U^{\mp n}$, for the entourage $U\in\U$ we can find an entourage $\widetilde U\in\U$ such that $\widetilde U^{\mp n}\subset U^{\pm n}$. Since $\U$ is a quasi-uniformity, we can additionally assume that $\widetilde U^2\subset U$.

Since $\U^{\mp n}=\U^{\pm n}\subset \U^{\pm (m-1)}$, there is an entourage $V\in\U$, $V\subset\widetilde U$, such that $V^{\pm (m-1)}\subset \widetilde U^{\mp n}\cap\widetilde U^{\pm n}$.

If $m$ is odd, then $V^{\mp m}=V^{-1}V^{\pm (m-1)}\subset \widetilde U^{-1}\widetilde U^{\mp n}\subset U^{\mp n}$ and after inversion, $V^{\pm m}\subset (\widetilde U^{\mp n})^{-1}$. If $n$ is odd, then  $V^{\pm m}\subset (\widetilde U^{\mp n})^{-1}=\widetilde U^{\pm n}\subset U^{\pm n}$. If $n$ is even, then  $V^{\pm m}\subset (\widetilde U^{\mp n})^{-1}=\widetilde U^{\mp n}\subset U^{\pm n}$ by the choice of $\widetilde U$.

If $m$ is even, then $V^{\pm m}=V^{\pm (m-1)}V^{-1}\subset (\widetilde U^{\mp n}\cap\widetilde U^{\pm n})\tilde U^{-1}$. If $n$ is odd, then $V^{\pm m}\subset \widetilde U^{\mp n}\widetilde U^{-1}\subset U^{\mp n}$ and after inversion we get $V^{\pm m}=(V^{\pm m})^{-1}\subset (U^{\mp n})^{-1}=U^{\pm n}$. If $n$ is even, then $V^{\pm m}\subset \tilde U^{\pm n}\widetilde U^{-1}\subset U^{\pm n}$ (which follows from $\tilde U^{-2}\subset U^{-1}$).

Therefore, $\U^{\pm n}\subset\U^{\pm m}$ for all $m\ge n$. In particular, $\U^{\pm n}\subset \U^{\pm 2n}\subset \U^{\pm n}\U^{\pm n}$, which means that $\U^{\pm n}$ is a quasi-uniformity. Since $(\U^{\pm n})^{-1}=\U^{\pm}$, the quasi-uniformity $\U^{\pm n}$ is a uniformity.
\end{proof}

\subsection{Boundedness numbers of pre-uniform spaces}
For any pre-uniformity $\U$ on a set $X$ define two cardinal characteristics:
\begin{itemize}
\item the {\em boundedness number} $\ell(\U)$, defined as the smallest cardinal $\kappa$ such that for any entourage $U\in\U$ there is a subset $A\subset X$ of cardinality $|A|\le \kappa$ such that $X=B(A;U)$;
\item the {\em sharp boundedness number} $L(\U)$, defined as the smallest cardinal $\kappa$ such that for any entourage $U\in\U$ there is a subset $A\subset X$ of cardinality $|A|<\kappa$ such that $X=B(A;U)$.
\end{itemize}
Since $\ell(\U)=L(\U)_-$, the sharp boundedness number $L(\U)$ determines the value of the boundedness number $\ell(\U)$.

Each pre-uniformity $\U$ on a set $X$ generates a topology $\tau_\U$ consisting of all subsets $W\subset X$ such that for each point $x\in W$ there is an entourage $U\in\U$ with $B(x;U)\subset W$, see \cite{CGKKM}. This topology $\tau_\U$ will be referred to as {\em the topology generated by the pre-uniformity} $\U$. If $\U$ is a quasi-uniformity, then for each point $x\in X$ the family of balls $\{B(x;U):U\in\U\}$ is a neighborhood base of the topology $\tau_\U$ at $x$. This implies that for a quasi-uniformity $\U$ on a set $X$ the topology $\tau_\U$ is Hausdorff if and only if  for any distinct points $x,y\in X$ there is an entourage $U\in\U$ such that $B(x;U)\cap B(y;U)=\emptyset$ if and only if $\bigcap\U\U^{-1}=\Delta_X$.
It is known (see \cite{Ku1} or \cite{Ku2}) that the topology of each topological space $X$ is generated by a suitable quasi-uniformity (in particular, the Pervin quasi-uniformity, generated by the subbase consisting of the entourages $(U\times U)\cup \big((X\setminus U)\times X\big)$ where $U$ runs over open sets in $X$).

For a pre-uniformity $\U$ on a topological space $X$ consider another two cardinal characteristics:
\begin{itemize}
\item the {\em dense boundedness number} $\bar \ell(\U)$, defined as the smallest cardinal $\kappa$ such that for any entourage $U\in\U$ there is a subset $A\subset X$ of cardinality $|A|\le \kappa$ such that $B(A;U)$ in dense in $X$;
\item the {\em dense sharp boundedness number} $\bar L(\U)$, defined as the smallest cardinal $\kappa$ such that for any entourage $U\in\U$ there is a subset $A\subset X$ of cardinality $|A|<\kappa$ such that $B(A;U)$ is dense in $X$.
\end{itemize}
The following diagram describes the interplay between the cardinal characteristics $\ell(\U)$, $L(\U)$, $\bar\ell(\U)$, and $\bar L(\U)$.
$$
\xymatrix{
\ell(\U)\ar@{=}[r]&L(\U)_-\ar[r]&L(\U)\\
\bar \ell(\U)\ar@{=}[r]\ar[u]&\bar L(\U)_-\ar[r]&\bar L(\U).\ar[u]
}
$$

Observe that for any pre-uniformities $\U\subset\V$ on a topological space we get $\ell(\U)\le\ell(\V)$, $\bar\ell(\U)\le\bar\ell(\V)$, $L(\U)\le L(\V)$, and $\bar L(\U)\le\bar L(\V)$. For any binary words $v\le w$ in $\{+,-\}^{<\w}$ we get $\U^w\subset\U^v$ and hence
$$\ell(\U^w)\le\ell(\U^v),\;\;\bar\ell(\U^w)\le\bar\ell(\U^v),\;\;L(\U^w)\le L(\U^v),\;\;\mbox{ and }\;\;\bar L(\U^w)\le \bar L(\U^v).$$

For a binary word $v\in\{+,-\}^{<\w}$ and a quasi-uniformity $\U$ we put $\ell^v(\U):=\ell(\U^v)$. In particular, for every $n\in\IN$ we put $\ell^{\pm n}(\U):=\ell(\U^{\pm n})$, $\ell^{\mp n}(\U):=\ell(\U^{\mp n})$, $\ell^{\wedge n}(\U):=\ell(\U^{\wedge n})$, and $\ell^{\vee n}(\U):=\ell(\U^{\vee n})$. Taking into account the inclusion relations between the pre-uniformities $\U^{\pm n}$, $\U^{\mp n}$, $\U^{\wedge n}$ and $\U^{\vee n}$, we get the following diagram.
$$
\xymatrix{
&&\ell^{\pm n}(\U)\ar[rd]\\
\ell^{\vee(n+1)}(\U)\ar[r]&\ell^{\wedge n}(\U)\ar[ru]\ar[rd]&&\ell^{\vee n}(\U)\ar[r]&\ell^{\wedge(n-1)}(\U)\\
&&\ell^{\mp n}(\U)\ar[ru]
}
$$

\section{Universal (pre- and quasi-) uniformities on topological spaces}\label{s3n}

Let $X$ be a topological space. An entourage $U$ on $X$ is called a {\em neighborhood assignment} if for every $x\in X$ the $U$-ball $B(x;U)$ is a neighborhood of $x$. The family $\pU_X$ of all neighborhood assignments on a topological space $X$ is a pre-uniformity called the {\em universal pre-uniformity} on $X$. It contains any pre-uniformity generating the topology of $X$ and is equal to the union of all pre-uniformities generating the topology of $X$.

The universal pre-uniformity $\pU_X$ contains
\begin{itemize}
\item the {\em universal quasi-uniformity} $\qU_X=\bigcup\{\U\subset\pU_X:\U$ is a quasi-uniformity on $X\}$, and
\item the {\em universal uniformity} $\U_X=\bigcup\{\U\subset\pU_X:\U$ is a uniformity on $X\}$
\end{itemize}
of $X$.
It is clear that $\U_X\subset q\U_X\subset  \pU_X$. Moreover, $q\U_X\subset \pU_X^{n+1}\subset \pU^n_X\subset \pU_X$ for every $n\in\IN$.

Since the topology of any topological space $X$ is generated by a quasi-uniformity, the universal quasi-uniformity $\qU_X$ generates the topology of $X$. In contrast, the universal uniformity $\U_X$ generates the topology of $X$ if and only if the space $X$ is completely regular.

Therefore, each topological space $X$ carries many canonical pre-uniformities: $\pU_X$, $\qU_X$, $\U_X$, their verbal powers, and the Boolean operations over their verbal powers.
The following diagram describe the inclusion relation between these canonical pre-uniformities. In this diagram for two pre-uniformities $\V,\W$ an arrow $\V\to\W$ indicates that $\V\subset\W$.
$$
\xymatrix{
&&&\pU_X^{\pm n}\ar[rrdd]\\
&&&\qU_X^{\pm n}\ar[rd]\ar[u]\\
\pU_X^{\vee(n+1)}\ar[r]&\pU_X^{\wedge n}\ar[rruu]\ar[rrdd]&\qU_X^{\wedge n}\ar[l]\ar[ru]\ar[rd]&\U_X\ar[l]&\qU_X^{\vee n}\ar[r]&\pU_X^{\vee n}\ar[r]&\pU_X^{\wedge (n-1)}\\
&&&\qU_X^{\mp n}\ar[ru]\ar[d]\\
&&&\pU_X^{\mp n}\ar[rruu]
}
$$

Now we detect spaces for which one of the inclusion $\U_X\subset \qU_X\subset \pU_X$ turns into equality.

\begin{proposition} A topological space $X$ has $\U_X=\qU_X$ if and only if $X$ is discrete.
\end{proposition}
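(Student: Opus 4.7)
The ``if'' direction is immediate. When $X$ is discrete, every singleton is open, so the diagonal $\Delta_X$ is itself a neighborhood assignment, whence $\pU_X$ coincides with the filter of all entourages on $X\times X$. This filter is automatically a uniformity (all four axioms are witnessed by $V=\Delta_X$, since $\Delta_X\cdot\Delta_X=\Delta_X=\Delta_X^{-1}$), so $\U_X=\qU_X=\pU_X$.

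For the converse, the plan is to use the Pervin quasi-uniformity to exhibit an element of $\qU_X\setminus\U_X$ whenever some open subset of $X$ fails to be closed. Given any open $V\subset X$, consider $E_V:=(V\times V)\cup((X\setminus V)\times X)$. Its balls are $B(x;E_V)=V$ for $x\in V$ and $B(x;E_V)=X$ for $x\notin V$, both neighborhoods of $x$, so $E_V\in\pU_X$. The subbase $\{E_V:V\text{ open in }X\}$ generates the Pervin quasi-uniformity introduced in Section~\ref{s2}, which sits inside $\pU_X$ and therefore inside $\qU_X$. Assuming $\qU_X=\U_X$, each $E_V$ belongs to the uniformity $\U_X$, so by closure of $\U_X$ under inversion its inverse $E_V^{-1}$ also lies in $\U_X\subset\pU_X$.

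A direct computation gives $B(x;E_V^{-1})=V$ for $x\in V$ and $B(x;E_V^{-1})=X\setminus V$ for $x\notin V$; the second identity forces $X\setminus V$ to be a neighborhood of each of its points, so $X\setminus V$ is open and $V$ is clopen. Since $V$ was arbitrary, every open subset of $X$ is clopen. The main obstacle is the final step, promoting ``every open is clopen'' to discreteness: for each $x\in X$ the intersection $C_x:=\bigcap\{V\text{ open}:x\in V\}$ is automatically clopen — its complement is the open set $\bigcup\{X\setminus V:V\text{ open},\,x\in V\}$, so $C_x$ is the smallest open neighborhood of $x$ — and the separation between distinct points implicit in the proposition collapses each $C_x$ to the singleton $\{x\}$, giving that $\{x\}$ is open and $X$ is discrete.
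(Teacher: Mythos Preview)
Your approach differs from the paper's: you use the Pervin subbasic entourages $E_V$ to show that $\U_X=\qU_X$ forces every open set to be clopen, and then try to promote this to discreteness; the paper instead exhibits a single explicit idempotent entourage $U=(\{x_0\}\times X)\cup(X\setminus\{x_0\})^2$ at a non-isolated point $x_0$, observes that $UU=U$ so that $U\in\qU_X$, and derives a contradiction directly from the existence of a symmetric $V\in\U_X$ with $V\subset U$.

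Your reduction to ``every open set is clopen'' is sound (modulo a harmless slip: for $x\in V$ one has $B(x;E_V^{-1})=X$, not $V$; only the case $x\notin V$ matters). The genuine gap is the last step. The phrase ``the separation between distinct points implicit in the proposition'' is not an argument: the proposition states no separation axiom, and ``every open set is clopen'' does \emph{not} imply discreteness---any indiscrete space, or more generally any partition topology, has this property while failing to be discrete. Your $C_x$ is indeed the smallest open set containing $x$ (arbitrary intersections of clopen sets are clopen), but without $T_0$ there is no reason for $C_x=\{x\}$. Under $T_0$ your argument would complete cleanly: clopen separation upgrades $T_0$ to $T_1$, and then each closed singleton becomes open. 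The paper's construction has the same hidden hypothesis in disguise: for its $U$ to be a neighborhood assignment one needs $X\setminus\{x_0\}$ to be a neighborhood of every $x\ne x_0$, i.e.\ $\{x_0\}$ closed. So both routes tacitly require a closed non-isolated point; the paper's argument is simply more direct once that is available.
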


\begin{proof} The ``if'' part is trivial. To prove the ``only if'' part, assume that the space $X$ is not discrete. Fix a non-isolated point $x_0\in X$ and consider the neighborhood assignment $U=\big(\{x_0\}\times X\big)\cup (X\setminus\{x_0\})^2$. Since $UU=U$, the entourage $U$ belongs to the universal quasi-uniformity $\qU_X$. Assuming that $\qU_X=\U_X$, we could find a neighborhood assignment $V\in\U_X$ such that $V^{-1}=V\subset U$. Since the point $x_0$ is not isolated, the ball $B(x_0;V)$ contains some point $x\ne x_0$. Then $X\setminus \{x_0\}=B(x;U)\supset B(x;V^{-1})\in x_0$, which is a contradiction.
\end{proof}

On the other hand, we have:

\begin{proposition}\label{p3.2n} Any paracompact space $X$ has $\U_X=\pU_X^{\mp 2}$.
\end{proposition}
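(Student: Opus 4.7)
The plan is to prove the two inclusions $\U_X\subset\pU_X^{\mp 2}$ and $\pU_X^{\mp 2}\subset\U_X$ separately. The first holds for an arbitrary topological space $X$ and needs no hypothesis: given $V\in\U_X$, fix a uniformity $\mathcal W\subset\pU_X$ containing $V$ and, using axioms (U3)--(U4) inside $\mathcal W$, pick a symmetric $W\in\mathcal W$ with $WW\subset V$. Then $V\supset W^{-1}W=W^{\mp 2}$ is a superset of a basic entourage of $\pU_X^{\mp 2}$, so $V\in\pU_X^{\mp 2}$.

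The substantive inclusion $\pU_X^{\mp 2}\subset\U_X$ is where paracompactness enters. Fix $U\in\pU_X$; it suffices to find a uniformity $\mathcal V\subset\pU_X$ containing some entourage below $U^{-1}U=U^{\mp 2}$. For each $x\in X$ choose an open neighborhood $O_x\subset B(x;U)$ of $x$, so that $\mathcal O=\{O_x\}_{x\in X}$ is an open cover of $X$. Since $X$ is paracompact, it is fully normal (by iterated application of \cite[Theorem 5.1.12]{Eng}), so we may construct a sequence $(\mathcal W_n)_{n\in\w}$ of open covers of $X$ such that $\mathcal W_0$ star-refines $\mathcal O$ and $\mathcal W_{n+1}$ star-refines $\mathcal W_n$ for every $n\in\w$.

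For every $n\in\w$ set $E_n=\bigcup_{W\in\mathcal W_n}W\times W$. Each $E_n$ is a symmetric entourage (it contains $\Delta_X$ because $\mathcal W_n$ covers $X$, and is symmetric by construction), and the ball $B(x;E_n)=\St(x;\mathcal W_n)$ is an open neighborhood of $x$, so $E_n\in\pU_X$. The star-refinement property yields $E_{n+1}E_{n+1}\subset E_n$: if $(a,b),(b,c)\in E_{n+1}$, then $a,b\in W_1$ and $b,c\in W_2$ for some $W_1,W_2\in\mathcal W_{n+1}$ with $b\in W_1\cap W_2$, so $W_1\cup W_2\subset\St(W_1;\mathcal W_{n+1})\subset W$ for some $W\in\mathcal W_n$, giving $(a,c)\in E_n$. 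Since $E_{n+1}\subset E_n$ as well, $\{E_n\}_{n\in\w}$ is a base of a uniformity $\mathcal V$ on $X$ with $\mathcal V\subset\pU_X$, and hence $\mathcal V\subset\U_X$. Moreover, the choice of $\mathcal W_0$ gives $E_0\subset U^{-1}U$: for $(a,b)\in E_0$ take $W\in\mathcal W_0$ with $a,b\in W$, and $x\in X$ with $\St(W;\mathcal W_0)\subset O_x\subset B(x;U)$, so $a,b\in B(x;U)$ and therefore $(a,b)\in U^{-1}U$. Thus $U^{\mp 2}\supset E_0\in\U_X$, proving $U^{\mp 2}\in\U_X$.

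The main obstacle is the full-normality step, i.e.\ the construction of the iterated star-refining sequence of open covers, which is precisely where paracompactness of $X$ is used. Once that sequence is in hand, the verification that the $E_n$ form a uniformity base inside $\pU_X$ and that $E_0$ sits inside $U^{\mp 2}$ is a routine bookkeeping matter.
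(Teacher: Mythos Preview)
Your proof is correct and follows essentially the same approach as the paper: both use the full-normality characterization of paracompactness to build an iterated sequence of star-refining open covers, then turn this sequence into a base of a uniformity contained in $\pU_X$ whose first member lies below $U^{-1}U$. Your version is in fact more carefully written than the paper's, which is quite terse (and contains a slight typo in the displayed formula for $V$); you make explicit the verification that $E_{n+1}E_{n+1}\subset E_n$ and that $E_0\subset U^{-1}U$, which the paper leaves to the reader.
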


\begin{proof} Given any neighborhood assignment $U\in\pU_X$, we need to find an entourage $V\in\U_X$ such that $V\subset U^{-1}U$. By Lemma~\ref{l2.2}, for every $x\in X$ we get $B(x;U^{-1}U)=\St(x;\U)$ where $\U=\{B(x;U):x\in X\}$. By the paracompactness of $X$ we can construct a sequence of open covers $(\U_n)_{n\in\w}$ of $X$ such that $\U_0=\U$ and for every $n\in\IN$ the cover $\St(\U_n)=\{\St(U,\U_{n}):U\in\U_n\}$ refines the cover $\U_{n-1}$. Using the sequence $(\U_n)_{n\in\w}$, we can show that the neighborhood assignment $V=\bigcup_{x\in X}\big(\{x\}\times\St(x;\U_n)\big)=U^{-1}U$ belongs to the universal uniformity $\U_X$ on $X$.
\end{proof}

The interplay between the pre-uniformities $\qU_X$ and $\pU_X$ is even more interesting.

\begin{proposition} Each countable $T_1$-space $X$ has $\pU_X=\qU_X$.
\end{proposition}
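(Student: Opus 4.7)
The plan is to show that $\pU_X$ itself is already a quasi-uniformity, whence $\pU_X\subset\qU_X$ follows immediately (and hence $\pU_X=\qU_X$, since the reverse inclusion always holds). Axioms (U1) and (U2) for $\pU_X$ are immediate: a superset of a neighborhood assignment is a neighborhood assignment, and $B(x;U\cap V)=B(x;U)\cap B(x;V)$ is a neighborhood of $x$ whenever $B(x;U)$ and $B(x;V)$ are. So the content is axiom (U3): given any $U\in\pU_X$, produce $V\in\pU_X$ with $VV\subset U$.

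For the construction, first shrink $U$ to $U_1\subset U$ whose balls $B(x;U_1)$ are \emph{open} neighborhoods of $x$; this is possible because $B(x;U)$ is by definition a neighborhood of $x$. Next enumerate $X=\{x_n:n\in\w\}$ (using countability) and define the entourage $U_2$ by
$$B(x_n;U_2):=B(x_n;U_1)\setminus\{x_i:i<n\}.$$
By the $T_1$ axiom, every finite subset of $X$ is closed, so $B(x_n;U_2)$ is open; and since $x_n\notin\{x_i:i<n\}$, it still contains $x_n$. The crucial feature of $U_2$ is that its balls are point-finite \emph{from below}: if $x_k\in B(x_n;U_2)$, then necessarily $n\le k$, for otherwise $x_k$ would lie in the removed set $\{x_i:i<n\}$. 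Consequently the set $S(x_k):=\{n\in\w:x_k\in B(x_n;U_2)\}$ is contained in $\{0,1,\dots,k\}$ and is therefore finite.

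Now define $V\in \pU_X$ by
$$B(x_k;V):=\bigcap_{n\in S(x_k)}B(x_n;U_2).$$
This is a \emph{finite} intersection of open sets each containing $x_k$, so $B(x_k;V)$ is an open neighborhood of $x_k$ and $V$ is indeed a neighborhood assignment. To verify $VV\subset U$, suppose $(x_k,x_j)\in V$ and $(x_j,x_i)\in V$. From $x_j\in B(x_k;V)\subset B(x_k;U_2)$ we read off that $k\in S(x_j)$; hence $B(x_j;V)\subset B(x_k;U_2)\subset B(x_k;U_1)\subset B(x_k;U)$, and so $x_i\in B(x_k;U)$, i.e.\ $(x_k,x_i)\in U$, as required.

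The main obstacle is that intersections of countably many neighborhoods are typically not neighborhoods, so one cannot naively define $B(x;V)$ as $\bigcap_{n:x\in B(x_n;U_1)} B(x_n;U_1)$. The trick is to pre-process $U$ into $U_2$ so that only finitely many of the balls $B(x_n;U_2)$ contain any given point $x_k$; this reverse-point-finiteness rests jointly on the countability of $X$ (to totally enumerate its points) and the $T_1$ axiom (to remove finite initial segments from open balls while preserving openness).
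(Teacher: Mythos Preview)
Your proof is correct and follows essentially the same approach as the paper. Both arguments enumerate $X$, use the $T_1$ axiom to delete the finitely many earlier-indexed points from each ball, and then observe that the resulting ``reverse-point-finiteness'' allows one to intersect only finitely many balls at each point; the paper organizes this as a single inductive construction producing an idempotent $V$ (with $VV=V\subset U$), whereas you do it in two explicit steps ($U_2$, then $V$) and verify only $VV\subset U$---but this suffices for (U3), and in fact your $V$ is also idempotent (since $x_j\in B(x_k;V)$ forces $S(x_k)\subset S(x_j)$). One cosmetic point: your enumeration tacitly assumes $X$ is infinite; the finite case is trivial since a finite $T_1$-space is discrete.
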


\begin{proof} If $X$ is finite, then the equality $\pU_X=\qU_X$ is trivial. So, we assume that $X$ is infinite. Given any neighborhood assignment $U\in\pU_X$ we shall construct a neighborhood assignment $V\in\pU_X$ such that $V=VV\subset U$. Let $X=\{x_n\}_{n\in\w}$ be an injective enumeration of $X$. By induction for every $n\in\w$ choose an open neighborhood $O(x_n)\subset X$ of $x_n$ such that $O(x_n)\subset B(x_n;U)\cap \bigcap\{O(x_k):k<n,\;x_n\in O(x_k)\}\setminus\{x_k\}_{k<n}$. We claim that the neighborhood assignment $V=\bigcup_{k\in\w}\{x_k\}\times O(x_k)$ has the desired property: $VV=V\subset U$. The inclusion   $V\subset U$ follows from $B(x_n;V)=O(x_n)\subset B(x_n;U)$ for $n\in\w$. To see that $VV\subset V$, fix any pair $(x_n,x_m)\in VV$ and find a point $x_k\in X$ with $(x_n,x_k),(x_k,x_m)\in V$.
It follow from $x_k\in B(x_n;V)=O(x_n)$ that $k\ge n$. By analogy, $x_m\in B(x_k;V)=O(x_k)$ implies $m\ge k$. By the choice of $O(x_k)$, the inclusion $x_k\in O(x_n)$ implies $x_m\in O(x_k)\subset O(x_n)= B(x_n;V)$ and hence $(x_n,x_m)\in V$.
The equality $VV=V$ implies that $V\in\qU_X$.
\end{proof}

Also the equality $\pU_X=\qU_X$ holds for hereditarily paracompact scattered spaces.
We recall that a topological space $X$ is {\em hereditarily paracompact} if each subspace of $X$ is paracompact.
A topological space $X$ is {\em scattered} if each subspace $A\subset X$ has an isolated point. By a result of Telgarsky \cite{Telga}, each scattered paracompact space $X$ is {\em strongly zero-dimensional} in the sense that each open cover of $X$ can be refined by a disjoint open cover.

\begin{proposition}\label{p3.4n} Each scattered hereditarily paracompact space $X$ has $\pU_X=\qU_X$.
\end{proposition}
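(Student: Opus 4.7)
The plan is to show that every $U\in \pU_X$ contains an idempotent entourage $V\subset U$ with $VV\subset V$. Any such $V$ generates a quasi-uniformity inside $\pU_X$, so $V\in \qU_X$, and since $\qU_X$ is closed under taking supersets, this forces $U\in \qU_X$, yielding $\pU_X\subset \qU_X$ (the reverse inclusion is trivial). By replacing each ball $B(x;U)$ with an open neighborhood of $x$ contained in it, we may assume that $U$ has open balls. By Telgarsky's theorem $X$ is strongly zero-dimensional, so every open cover of $X$ admits a disjoint open refinement---which is automatically a clopen partition---and every open subspace $W\subset X$ is again scattered and hereditarily paracompact, carrying a restricted neighborhood assignment $U|_W$ given by $B(y;U|_W):=B(y;U)\cap W$.

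I argue by transfinite induction on the scattered height $\lambda$ of $X$, i.e., the least ordinal with $X^{(\lambda)}=\emptyset$, where $X^{(\alpha)}$ denotes the $\alpha$-th Cantor--Bendixson derivative. The case $\lambda=0$ is vacuous. If $\lambda$ is a limit ordinal, then $\{X\setminus X^{(\beta)}:\beta<\lambda\}$ is an open cover of $X$, and any clopen partition $\mathcal{W}$ refining it consists of pieces of scattered height $<\lambda$ (for open $W\subset X$ the $\alpha$-th derivative of $W$ equals $W\cap X^{(\alpha)}$, which is empty for some $\alpha<\lambda$). Applying the induction hypothesis on each $W$ with $U|_W$ yields a transitive $V_W\subset U|_W$; defining $V$ on $X$ by $B(y;V):=B(y;V_W)$ for $y\in W$ gives a neighborhood assignment $V\subset U$, and transitivity follows since disjointness of $\mathcal{W}$ forces any $y\in B(z;V)$ to lie in the same piece as $z$.

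If $\lambda=\beta+1$ is a successor, the set $X^{(\beta)}$ is non-empty, closed, and discrete in $X$. For each $x\in X^{(\beta)}$ choose an open $O_x$ with $x\in O_x\subset B(x;U)$ and $O_x\cap X^{(\beta)}=\{x\}$, and let $\mathcal{W}$ be a clopen partition refining $\{O_x:x\in X^{(\beta)}\}\cup\{X\setminus X^{(\beta)}\}$. The unique piece $T_x\in\mathcal{W}$ containing $x\in X^{(\beta)}$ must refine $O_x$ (any other option would be a subset of $X\setminus X^{(\beta)}$, which excludes $x$), so $T_x\subset B(x;U)$ and $T_x\cap X^{(\beta)}=\{x\}$; consequently $T_x\setminus\{x\}$ is open in $X$ of scattered height $\le\beta$, and any other $W\in\mathcal{W}$ misses $X^{(\beta)}$ and also has height $\le\beta$. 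Induction supplies transitive $V_x\subset U|_{T_x\setminus\{x\}}$ and $V_W\subset U|_W$ on these pieces, and we define $V$ by $B(x;V):=T_x$ for $x\in X^{(\beta)}$, $B(y;V):=B(y;V_x)$ for $y\in T_x\setminus\{x\}$, and $B(y;V):=B(y;V_W)$ otherwise. The inclusion $V\subset U$ is clear, and transitivity follows from a short case analysis: for $z\in X^{(\beta)}$ and $y\in T_z\setminus\{z\}$ one has $B(y;V)=B(y;V_z)\subset T_z=B(z;V)$; and for $z$ in any other piece, $B(z;V)$ stays inside that piece, so transitivity of $V_z$ or $V_W$ closes the argument.

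The main obstacle is the successor step: one must peel off the top stratum $X^{(\beta)}$ so that each ``top'' point $x$ receives a clopen neighborhood $T_x$ lying inside the prescribed $B(x;U)$ while the residue $T_x\setminus\{x\}$ strictly drops in scattered height, allowing the induction to proceed. Telgarsky's disjoint-refinement theorem is precisely the tool that makes this peeling possible at both the limit and the successor stages, and the bookkeeping across pieces---rather than any single hard inequality---is the technical heart of the proof.
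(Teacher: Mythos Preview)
Your proof is correct, but it is organized differently from the paper's argument. The paper does not induct on the Cantor--Bendixson height. Instead, it introduces the family $\W$ of all open sets $W\subset X$ admitting a transitive neighborhood assignment $V_W=V_WV_W\subset U\cap(W\times W)$, observes that $\bigcup\W$ is dense (it contains every isolated point), and then argues by contradiction: if $A=X\setminus\bigcup\W\ne\emptyset$, take an isolated point $x$ of $A$, and show that the open set $\dot W=\{x\}\cup\bigcup\W$ already lies in $\W$, contradicting $x\notin\bigcup\W$. The gluing step---using Telgarsky's disjoint-refinement theorem on $\bigcup\W$ and assigning $x$ a clopen ball $O_x\subset B(x;U)$---is essentially the same mechanism as your successor step, but the paper adds one point at a time rather than peeling off an entire stratum $X^{(\beta)}$.

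What each approach buys: the paper's maximality argument is shorter and avoids the limit/successor case split; it also sidesteps the need to verify the identity $W^{(\alpha)}=W\cap X^{(\alpha)}$ for open $W$. Your induction on scattered height is more constructive and makes the recursion structure explicit, which some readers may find clearer. Both proofs rest on the same two ingredients---Telgarsky's theorem for scattered paracompact spaces, and the observation that transitive entourages glue across clopen partitions---so the difference is organizational rather than conceptual.
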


\begin{proof} Given any neighborhood assignment $U\in\pU_X$ we shall find a neighborhood assignment  $V\in\qU_X$ such that $V=VV\subset U$. Consider the family $\W$ of open sets $W\subset X$ for which there exists an neighborhood assignment $V_W\in\pU_W$ such that $V_W=V_WV_W\subset U\cap(W\times W)$.
Observe that for each isolated point $x\in X$ the singleton $\{x\}$ belongs to $\W$, which implies that the union $\bigcup\W$ is a dense open subset of $X$. We claim that $\bigcup\W=X$. Assuming that the remainder $A=X\setminus \bigcup\W$ is not empty, we could find an isolated point $x\in A$.
Since the set $A\setminus\{x\}$ is closed in $A$, the set $\dot W=\{x\}\cup\bigcup\W$ is open in $X$.
To derive a contradiction, we shall prove that $\dot W\in\W$.

Since $X$ is (strongly) zero-dimensional, the point $x$ has a closed-and-open neighborhood $O_x\subset \dot W$ such that $O_x\subset B(x;U)$ and $O_x\cap A=\{x\}$. Since the space $X$ is hereditarily paracompact, its open subset $\bigcup\W$ is paracompact and, being scattered, is strongly zero-dimensional according to \cite{Telga}. Consequently, we can find a disjoint open cover $\W'$ which refines the open cover $\W\wedge\{O_x,X\setminus O_x\}=\{W\cap O_x,W\setminus O_x:W\in\W\}$ of the set $\bigcup\W$. By definition of $\W$, for every set $W\in\W'$ there is an entourage $V_{W'}\in\pU_{W'}$ such that $V_{W'}=V^2_{W'}\subset U\cap(W'\times W')$. Then the neighborhood assignment $V=(\{x\}\times O_x)\cup\bigcup_{W'\in\W'}V_{W'}$ on $\dot W$ has the desired property: $V=VV\subset U$, witnessing that $\dot W\in\W$. But this is not possible as $\dot W\not\subset\bigcup\W$. This contradiction shows that $\bigcup\W=X$. Using the strong zero-dimensionality of $X$ and repeating the above argument, we can find a neighborhood assignment $V$ on $X$ such that $V=VV\subset U$. The equality $V=VV$ implies that $V\in\qU_X$ and hence $U\in\qU_X$.
\end{proof}

The (hereditary) paracompactness of the scattered space $X$ in Proposition~\ref{p3.4n} is essential as shown by the following example.

\begin{example} The ordinal $X=\w_1=[0,\w_1{[}$ endowed with the order topology has $\pU_X\ne \pU_X^2$ and hence $\pU_X\ne \qU_X$. On the other hand, $\U_X=\pU_X^{\mp2}$.
\end{example}

\begin{proof} Let us recall that a subset $S\subset \w_1$ is called {\em stationary} if $S$ meets each closed unbounded subset of $\w_1$. By \cite[23.4]{JW2}, the space $\w_1$ contains a disjoint family $\{S_\alpha\}_{\alpha<\w_1}$ consisting of $\w_1$ many stationary sets. We lose no generality assuming that each stationary set $S_\alpha$ is contained in the order interval ${]}\alpha,\w_1{[}$. Consider a neighborhood assignment $U$ on $X$ such that for any ordinal $\alpha<\w_1$ and point $x\in S_\alpha$ we get $B(x;U)={]}\alpha,x]$. We claim that $U\notin\pU_X^2$. Assuming the converse, we could find a neighborhood assignment $V\in\pU_X$ such that $VV\subset U$. For every ordinal $\alpha\in X=\w_1$ find an ordinal $f(\alpha)<\alpha$ such that ${]}f(\alpha),\alpha]\subset B(\alpha;V)$. By Fodor's Lemma~\cite[21.12]{JW2}, for some stationary set $S\subset\w_1$ the restriction $f|S$ is constant and hence $f(S)=\{c\}$ for some ordinal $c$.
We lose no generality assuming that $s>c$ for any ordinal $s\in S$.

Take any ordinal $\alpha>c$. The set $S_\alpha$, being stationary, meets the closed unbounded set
$\overline{S}$. Then we can find a point $x\in S_\alpha\cap\overline{S}$ and a point $s\in S\cap B(x;V)$. Then $$c+1\in{]}c,s]={]}f(s),s]\subset B(s;V)\subset B(x;VV)\subset B(x;U)={]}\alpha,x]$$ which is not possible as $c<\alpha$. This contradiction completes the proof of the inequality $\pU_X\ne\pU_X^2$.
\smallskip

Now we prove that $\U_X=\pU_X^{\mp2}$. Given any neighborhood assignment $U\in\pU_X$, we need to show  that $U^{-1}U\in\U_X$. For any ordinal $\alpha\in \w_1$ find an ordinal $f(\alpha)<\alpha$ such that ${]}f(\alpha),\alpha]\subset B(\alpha;U)$. By Fodor's Lemma~\cite[21.12]{JW2}, for some stationary set $S\subset [0,\w_1{[}$ the restriction $f|S$ is constant and hence $f(S)=\{\beta\}$ for some countable ordinal $\beta$. Then for every countable ordinal $x>\beta$ we can find an ordinal $\alpha\in S$ with $\alpha>x$ and conclude that $x\in {]}\beta,\alpha]\subset B(\alpha;U)$, which implies $\alpha\in B(x;U^{-1})$ and ${]}\beta,\alpha]\subset B(\alpha;U)\subset B(x;U^{-1}U)$. Consequently, ${]}\beta,\w_1{[}=\bigcup_{S\ni\alpha>x}{]}\beta,\alpha]\subset B(x;U^{-1}U)$. By Proposition~\ref{p3.2n}, for the compact metrizable space $[0,\beta]$ there is a sequence $(V_n)_{n\in\w}$ of neighborhood assignments such that $V_0\subset U^{-1}U\cap[0,\beta]^2$ and $V_{n+1}^2\subset V_n=V_n^{-1}\subset [0,\beta]^2$ for every $n\in\w$. For every $n\in\w$ put $W_n=V_n\cup\,{]}\beta,\w_1{[}^2$ and observe that $W_0\subset U^{-1}U$ and $(W_n)_{n\in\w}$ is a sequence of neighborhood assignments on $X$ such that $W_{n+1}^2\subset W_n=W_n^{-1}$ for every $n\in\w$. This implies that $\{W_n\}_{n\in\w}\subset\pU_X$ is a base of a uniformity on $X$ and hence the entourages $W_0$ and $U^{-1}U$ belong to the universal uniformity $\U_X$ of $X$.
\end{proof}

Now we shall characterize the metrizable spaces $X$ with $\pU_X=\qU_X$. Let us recall that a topological space $X$ is called a {\em $Q$-set} if each subset in $X$ is of type $F_\sigma$ (see \cite[\S4]{Miller},  \cite{BMZ} for more information on $Q$-sets). We define a topological space $X$ to be a {\em $Q_\w$-set} if for any increasing sequence $(X_n)_{n\in\w}$ of sets with $X=\bigcup_{n\in\w}X_n$ there exists a sequence $(F_n)_{n\in\w}$ of closed subsets in $X$ such that $X=\bigcup_{n\in\w}F_n$ and $F_n\subset X_n$ for all $n\in\w$. It is easy to see that each $Q$-set is a $Q_\omega$-set.
On the other hand, each metrizable $Q_\w$-set is perfectly meager.

A topological space $X$ is called {\em perfectly meager} if each crowded subspace of $X$ is meager in itself. A topological space is {\em crowded\/} if it has no isolated points. More information of perfectly meager spaces can be found in \cite[\S5]{Miller}.

\begin{proposition}\label{p3.6n} Each (metrizable) $Q_\w$-set $X$ has cardinality $|X|<\w\cdot 2^{w(X)}$ (and is perfectly meager).
\end{proposition}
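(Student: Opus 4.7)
The plan is in two parts: bound $|X|$ by a diagonal argument on functions $X\to\w$, then exhibit every crowded subspace of a metrizable $Q_\w$-set as meager by a $\tfrac1n$-net construction.

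For the cardinality, first reformulate: $X$ is a $Q_\w$-set if and only if every function $f:X\to\w$ is dominated pointwise by some $g:X\to\w$ whose sublevel sets $g^{-1}(\{0,\dots,n\})$ are all closed in $X$ (one direction: given $f$, apply the $Q_\w$ property to the exhaustion $X_n=f^{-1}(\{0,\dots,n\})$ and set $g(x)=\min\{n:x\in F_n\}$; the other direction is analogous). Each such ``admissible'' $g$ is determined by its $\w$-sequence of closed sublevel sets, so the number of admissible functions is at most $(\#\,\text{closed subsets of }X)^{\w}\le(2^{w(X)})^{\w}$, which equals $2^{w(X)}$ when $w(X)$ is infinite and equals $\w$ when $w(X)$ is finite; either way the bound is $\w\cdot 2^{w(X)}$. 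If $|X|$ reached this cardinal, we could enumerate all admissible $g$ as $(g_{\alpha})_{\alpha<\kappa}$, pick distinct $x_{\alpha}\in X$, and set $f(x_{\alpha}):=g_{\alpha}(x_{\alpha})+1$ with $f\equiv0$ elsewhere; then $f\not\le g_{\alpha}$ for every $\alpha$, contradicting the $Q_\w$-set property of $X$. Hence $|X|<\w\cdot 2^{w(X)}$.

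For the perfectly meager assertion, first verify hereditarity: if $A\subset X$ with $A=\bigcup_n A_n$ increasing, then the sets $X_n:=A_n\cup(X\setminus A)$ exhaust $X$, and intersecting the resulting closed witnesses with $A$ gives closed witnesses in $A$, so $A$ is itself a $Q_\w$-set. Any crowded subspace of $X$ is then a crowded metrizable $Q_\w$-set, and it suffices to show such a space $X$ is meager in itself. Using Zorn's Lemma, construct an increasing chain $D_0=\emptyset\subset D_1\subset D_2\subset\cdots$ with $D_n$ a maximal $\tfrac1n$-separated subset of $X$ extending $D_{n-1}$, and put $D=\bigcup_n D_n$. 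Maximality makes $D_n$ a $\tfrac1n$-net, so $D$ is dense. The key observation: for $n\ge 1$ and $\e<\tfrac1{2n}$ the ball $B(x,\e)$ has diameter $<\tfrac1n$ and so meets the $\tfrac1n$-separated set $D_n$ in at most one point; but $B(x,\e)$ is nonempty open in a crowded metric space, hence infinite, and its intersection with the dense set $D$ is infinite, so $B(x,\e)\cap(D\setminus D_n)\ne\emptyset$. Therefore $D\setminus D_n$ is dense in $X$ for every $n\in\w$.

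Finally, consider the increasing exhaustion $X_n:=(X\setminus D)\cup D_n$ of $X$. Its complement $X\setminus X_n=D\setminus D_n$ is dense, so each $X_n$ has empty interior in $X$. Applying the $Q_\w$-set property to this exhaustion yields closed sets $F_n\subset X_n$ covering $X$; each $F_n$, being a closed subset of the empty-interior set $X_n$, is nowhere dense in $X$, so $X=\bigcup_n F_n$ is meager in itself. The delicate step that I expect to be the main obstacle is arranging the simultaneous density of every $D\setminus D_n$, which is what forces the $X_n$ to have empty interior and allows the $Q_\w$ property to do the real work of converting an increasing set-theoretic exhaustion into a meager closed decomposition.
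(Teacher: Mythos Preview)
Your proof is correct, and both halves take genuinely different routes from the paper.

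For the cardinality bound, the paper argues by contradiction via a transfinite construction: assuming $|X|\ge\omega\cdot 2^{w(X)}$, it enumerates the ``large'' closed sets (those of cardinality $\ge\omega\cdot 2^{w(X)}$), threads pairwise disjoint countable sequences through each of them, and uses these to build an increasing exhaustion $(X_n)$ whose $Q_\omega$-witnesses $A_n$ are forced to miss every large closed set, hence have small cardinality; K\"onig's Lemma on $\cf(2^{w(X)})>\omega$ then gives the contradiction. Your reformulation in terms of functions $f:X\to\omega$ dominated by ``admissible'' $g$ (those with closed sublevel sets) and the ensuing counting-plus-diagonalisation is cleaner and avoids both the transfinite bookkeeping and the appeal to K\"onig. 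One small slip: when $w(X)$ is finite, $(2^{w(X)})^{\omega}$ equals $2^{\omega}$, not $\omega$; what is true is that an admissible $g$ is determined by a weakly increasing $\omega$-sequence of closed sets that is eventually constant at $X$, and with only finitely many closed sets there are only countably many such sequences, so the bound $\omega\cdot 2^{w(X)}=\omega$ still holds.

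For the perfectly meager part, the paper invokes the $\sigma$-discrete base of a metrizable space to split the crowded subspace $Z$ into countably many pairwise disjoint dense pieces $D_n$, then applies the $Q_\omega$-property of $X$ to the exhaustion $(X\setminus Z)\cup\bigcup_{k\le n}D_k$; each resulting closed witness misses the dense set $D_{n+1}$ and is therefore nowhere dense in $Z$. Your nested $\tfrac1n$-net construction achieves the same effect more directly from the metric: the key point---that $D\setminus D_n$ is dense because small balls are infinite in a crowded metric space, meet the dense set $D$ infinitely often, but meet the $\tfrac1n$-separated set $D_n$ at most once---is exactly right, and passing through hereditarity of the $Q_\omega$-property lets you work inside the subspace rather than padding by $X\setminus Z$ as the paper does. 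Both arguments are short; yours is perhaps more self-contained since it does not cite the $\sigma$-discrete base theorem.
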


\begin{proof} Assume that an infinite topological space $X$ is a $Q_\w$-set.
\smallskip

First we prove that $|X|<\w\cdot2^{w(X)}$. To derive a contradiction, assume that $|X|\ge \w\cdot 2^{w(X)}$. Let $\tau$ denote the topology of the space $X$ and observe that $|\tau|\le 2^{w(X)}$. Denote by $\F$ the family of all closed subsets $F\subset X$ of cardinality $|F|\ge \w\cdot 2^{w(X)}$ and observe that $|\F|\le|\tau|\le 2^{w(X)}$. Let $\F=\{F_\alpha\}_{\alpha<|\F|}$ be an enumeration of the set $\F$.

\begin{claim} For every $\alpha<|\F|$ there is an injective map $i_\alpha:\w\to F_\alpha$ such that the indexed family $\big(i_\alpha(\w)\big)_{\alpha<|\F|}$ is disjoint.
\end{claim}

\begin{proof} If $w(X)$ is finite, then so is the family $\F$. By induction for every $n\in\w$ choose an injective map $j_n:\F\to X$ such that $j_n(F)\in F\setminus \bigcup_{k<n}j_k(\F)$ for every $F\in\F$. For every $\alpha<|F|$ put $i_\alpha(n)=j_n(F_\alpha)$ and observe that the map $i_\alpha:\w\to F_\alpha$ is injective and the indexed family $\big(i_\alpha(\w)\big)_{\alpha<|\F|}$ is disjoint.

If $w(X)$ is infinite, then $|\F|\ge\w\cdot 2^{w(X)}\ge 2^\w$. In this case by transfinite induction, for every ordinal $\alpha<|\F|$ fix an injective map $i_\alpha:\w\to F_\alpha\setminus \bigcup_{\beta<\alpha}i_\beta(\w)$. The choice of $i_\alpha$ is always possible since $|\bigcup_{\beta<\alpha}i_\beta(\w)|=|\w\times \alpha|<|\w\times \F|\le \w\cdot 2^{w(X)}\le|F_\alpha|$. This construction ensures that the indexed family $\big(i_\alpha(\w)\big)_{\alpha<|\F|}$ is disjoint.
\end{proof}

For every $n\in\w$ consider the set
$X_n=X\setminus\{i_\alpha(m):m\ge n,\;\alpha<|\F|\}$ and observe that $X=\bigcup_{n\in\w}X_n$. Since $X$ is a $Q_\w$-set, there is an increasing sequence $(A_n)_{n\in\w}$ of closed subsets in $X$ such that $X=\bigcup_{n\in\w}A_n$ and $A_n\subset X_n$ for all $n\in\w$. The choice of the sets $X_n$, $n\in\w$, guarantees that $A_n\notin\F$  and hence $|A_n|<\w\cdot 2^{w(X)}$ for all $n\in\w$. If $w(X)$ is finite, then $\{A_n\}_{n\in\w}\subset \{X\setminus U:U\in\tau\}$ is a finite family of finite sets in $X$. It follows that the union $X=\bigcup_{n\in\w}A_n$ is finite, which contradicts the assumption $|X|\ge\w\cdot 2^{w(X)}$. So, we conclude that $w(X)$ is infinite. In this case K\"onig's Lemma (see Corollary 24 of \cite{JW1}) guarantees that  $\cf(2^{w(X)})>w(X)\ge \w$. Consequently, $|X|=\big|\bigcup_{n\in\w}A_n\big|<2^{w(X)}\le|X|$, which is a desired contradiction witnessing that $|X|<2^{w(X)}$.
\smallskip

Now assume that the space $X$ is metrizable. To prove that $X$ is perfectly meager, fix a crowded subspace $Z\subset X$. Using the well-known fact \cite[4.4.3]{Eng} that each metrizable space has a $\sigma$-discrete base, one can construct a countable disjoint family $(D_n)_{n\in\w}$ of dense sets in $Z$ such that $Z=\bigcup_{n\in\w}D_n$. Since $X$ is a $Q_\w$-set, for the increasing sequence $\big((X\setminus Z)\cup\bigcup_{k\le n}D_n\big)_{n\in\w}$ there exists an increasing sequence $(F_n)_{n\in\w}$ of closed subsets of $X$ such that $X=\bigcup_{n\in\w}F_n$ and $F_n\subset (X\setminus Z)\cup\bigcup_{k\le n}D_n$ for all $n\in\w$. For every $n\in\w$ the closed subset $F_n\cap Z$ is disjoint with the dense set $D_{n+1}$ in $Z$ and hence $F_n\cap Z$ is nowhere dense in $Z$. Since $Z=\bigcup_{n\in\w}F_n\cap Z$, the space $Z$ is meager.
\end{proof}

Thus for any metrizable space $X$ we have the implications:
$$\mbox{$Q$-set \ $\Ra$ $Q_\w$-set \ $\Ra$ perfectly meager}.$$
Now we can prove the promised characterization of metrizable spaces $X$ with $\pU_X=\qU_X$.

\begin{proposition}\label{p3.7n} For a metrizable space $X$ the following conditions are equivalent:
\begin{enumerate}
\item $\pU_X=\qU_X$;
\item $\pU_X=\pU_X^{\,2}$;
\item $X$ is a $Q_\w$-set.
\end{enumerate}
The equivalent conditions \textup{(1)--(3)} imply that $X$ is perfectly meager and $|X|<2^{w(X)}$.
\end{proposition}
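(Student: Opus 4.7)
My plan is to close the cycle $(1)\Leftrightarrow(2)\Leftrightarrow(3)$; the concluding assertion then follows immediately from Proposition~\ref{p3.6n}. The equivalence $(1)\Leftrightarrow(2)$ is formal: since $\pU_X$ is always a filter of entourages, satisfying (U1)--(U2), it is a quasi-uniformity precisely when it satisfies (U3), i.e.\ when $\pU_X=\pU_X^{\,2}$; and $\pU_X$ being a quasi-uniformity is in turn equivalent to $\pU_X=\qU_X$, because by construction $\qU_X$ is the union of all quasi-uniformities contained in $\pU_X$.

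For $(3)\Rightarrow(2)$ I would fix a compatible metric $d$ on $X$ and an arbitrary $U\in\pU_X$. For each $x$ choose $r(x)>0$ with $B_d(x,r(x))\subset B(x;U)$, and apply the $Q_\w$-property to the increasing cover $X_n:=\{x:r(x)\ge 1/(n+1)\}$, $n\in\w$, to obtain an increasing sequence of closed sets $F_n\subset X_n$ with $\bigcup_n F_n=X$. Set $F_{-1}:=\emptyset$ and $m(x):=\min\{n\ge 0:x\in F_n\}$, and pick $s(x)\in(0,1/(2(m(x)+1))]$ so small that $B_d(x,s(x))$ misses the closed set $F_{m(x)-1}$. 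Define $V\in\pU_X$ by $B(x;V):=B_d(x,s(x))$. A short triangle-inequality check verifies $VV\subset U$: any $y\in B(x;V)$ lies outside $F_{m(x)-1}$, so $m(y)\ge m(x)$, whence $s(y)\le 1/(2(m(x)+1))$; combined with $r(x)\ge 1/(m(x)+1)$ (from $x\in F_{m(x)}\subset X_{m(x)}$), this gives $B(y;V)\subset B_d(x,r(x))\subset B(x;U)$.

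For $(2)\Rightarrow(3)$ the proof is dual but subtler. Given an increasing cover $(X_n)_{n\in\w}$ of $X$, set $Y_n:=X\setminus X_n$; it suffices to build open sets $O_n\supset Y_n$ with $\bigcap_n O_n=\emptyset$, after which $F_n:=X\setminus O_n$ witnesses the $Q_\w$-property. Let $n(x):=\max\{n:x\in Y_n\}$, finite since $\bigcap_n Y_n=\emptyset$ (with convention $n(x):=-1$ if $x\notin Y_0$), take $B(x;U):=B_d(x,2^{-n(x)-1})$, and use $(2)$ to find $V\in\pU_X$ with $VV\subset U$. For each $x$ pick $s(x)\in(0,2^{-n(x)-1}]$ with $B_d(x,s(x))\subset B(x;V)$, and set
$$
O_n:=\Big(\bigcup_{x\in Y_n}B_d(x,s(x))\Big)\cup(Y_n\cap I),
$$
where $I$ denotes the (open) set of isolated points of $X$, so that every subset of $I$ is open in $X$. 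Each $O_n$ is then open and contains $Y_n$. For $y\in\bigcap_n O_n$ I argue by cases. If $y$ is non-isolated, the $(Y_n\cap I)$-summand cannot contain $y$, so for every $n$ there is $x_n\in Y_n$ with $y\in B(x_n;V)$, and $VV\subset U$ forces $B(y;V)\subset B(x_n;U)\subset B_d(x_n,2^{-n-1})$; the $d$-diameter of $B(y;V)$ is therefore $0$, so $B(y;V)=\{y\}$, contradicting the non-isolation of $y$. If $y$ is isolated, then $\varepsilon_y:=\inf_{z\ne y}d(y,z)>0$ and $n(y)<\infty$, so for $k>n(y)$ the only way $y$ lies in $O_k$ is via some $x_k\in Y_k\setminus\{y\}$ with $d(y,x_k)<s(x_k)\le 2^{-k-1}$, which eventually violates $d(y,x_k)\ge\varepsilon_y$.

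The main obstacle is precisely the isolated-point case in $(2)\Rightarrow(3)$: the diameter-shrinking trick only confines $\bigcap_n O_n$ inside $I$, and to evict isolated points one has to both augment $O_n$ by the open set $Y_n\cap I$ and exploit the auxiliary constraint $s(x)\le 2^{-n(x)-1}$ in order to beat the isolation number $\varepsilon_y$. The final assertion that conditions (1)--(3) force $X$ to be perfectly meager with $|X|<2^{w(X)}$ is then immediate from Proposition~\ref{p3.6n}.
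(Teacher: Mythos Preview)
Your argument is correct and follows the same underlying idea as the paper (stratify $X$ into metric ``levels'' and control ball radii accordingly), but your organization is tidier in one respect. You observe that $(1)\Leftrightarrow(2)$ is purely formal, so it suffices to prove $(3)\Rightarrow(2)$ rather than the paper's harder $(3)\Rightarrow(1)$; the paper instead constructs, from the $Q_\omega$-decomposition, an entire sequence $(V_k)_{k\in\omega}$ with $V_{k+1}^2\subset V_k$ to exhibit $U\in\qU_X$ directly. Your single $V$ with $VV\subset U$ is enough once $(2)\Rightarrow(1)$ is in hand. For $(2)\Rightarrow(3)$ the paper proceeds slightly differently: it builds $U$ so that $B(x;U)=\{x\}$ at isolated points and $B_d(x;2^{-n_x})\not\subset B(x;U)$ at non-isolated points, then shows the closures $\overline{Z_n}$ of the ``large-$V$-ball'' sets lie in $X_n$, handling isolated points afterwards by writing the open discrete set $X\setminus X'$ as a countable union of closed sets. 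Your version with the sets $O_n$ and the two-case analysis via $\varepsilon_y$ is an equally valid packaging of the same diameter-shrinking idea; note, incidentally, that the summand $Y_n\cap I$ in your definition of $O_n$ is redundant, since already $Y_n\subset\bigcup_{x\in Y_n}B_d(x,s(x))$ and in both cases of your endgame you immediately discard it.
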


\begin{proof} The implication $(1)\Ra(2)$ is trivial. To prove that $(2)\Ra(3)$ assume that $\pU_X=\pU_X^{\,2}$ and fix any increasing sequence $(X_n)_{n\in\w}$ of subsets such that $X=\bigcup_{n\in\w}X_n$. Denote by $X'$ the set of non-isolated points in $X$. For every point $x\in X$ let $n_x=\min\{n\in\w:x\in X_n\}$. Fix a metric  $d$ generating the topology of $X$. For a point $x\in X$ and $\e>0$ denote by $B_d(x;\e)=\{y\in X:d(x,y)<\e\}$ the open $\e$-ball centered at $x$. Choose a neighborhood assignment $U$ on $X$ such that $B(x;U)=\{x\}$ for any isolated point $x\in X$ and $B_d(x;2^{-n_x})\not\subset B(x;U)$ for any non-isolated point $x\in X$. Since $U\in\U_X=\pU_X^2$, there is a neighborhood assignment $V\in\pU_X$ such that $VV\subset U$. For every $n\in\w$ consider the set $Z_n=\{x\in X':B_d(x;2^{-n})\subset B(x;V)\}$ and its closure $\overline{Z}_n$ in $X'$. Observe that $X'=\bigcup_{n\in\w}Z_n=\bigcup_{n\in\w}\overline{Z}_n$. We claim that $\overline{Z}_n\subset X_n$ for every $n\in\w$. Given $n\in\w$ and point $x\in \overline{Z}_n$, find a point $z\in Z_n\cap B(x;V)\cap B_d(x;2^{-n-1})$. Then $B(x;2^{-n-1})\subset B(z;2^{-n})\subset B(z;V)\subset B(x;VV)\subset B(x;U)$, which implies that $n+1>n_x$ and hence $x\in X_n$. So, $\overline{Z}_n\subset X_n$. Write the open discrete subset $X\setminus X'$ as a countable union $X\setminus X'=\bigcup_{n\in\w}D_n$ of closed subsets $D_n$ of $X$. Then $X=\bigcup_{n\in\w}F_n$ for the increasing sequence $(F_n)$ of the closed sets $F_n=(D_n\cap X_n)\cup\overline{Z}_n\subset X_n$, $n\in\w$, which means that $X$ is a $Q_\w$-set.
\smallskip

$(3)\Ra(1)$ Assume that $X$ is a $Q_\w$-set. To prove that $\pU_X=\qU_X$, take any neighborhood assignment $U\in\pU_X$. Fix a metric $d\le 1$ generating the topology of the metrizable space $X$. For every $n\in\w$ consider the set $X_n=\{x\in X:B_d(x;3^{-n})\subset B(x;U)\}$. The space $X$, being a $Q_\w$-set, can be written as the union $X=\bigcup_{n\in\w}F_n$ of an increasing sequence $(F_n)_{n\in\w}$ of closed subsets of $X$ such that $F_n\subset X_n$ for every $n\in\w$. For every point $x\in X$ let $n_x=\min\{n\in\w:x\in F_n\}$. Observe that $B_d(x;3^{-n_x})\subset B(x;U)$.

For every number $k\in\w$ consider the neighborhood assignment $V_k$ on $X$ assigning to each point $x\in X$ the open ball $B(x;V_k)=B_d(x;3^{-k-1}\cdot\min\{3^{-n_x},d(x;F_{n_x-1})\})$. Here we assume that $F_{-1}=\emptyset$ and $d(x,\emptyset)=1$. It follows that $B(x;V_0)\subset B_d(x;3^{-n_x})\subset B(x;U)$ and hence $V_0\subset U$. The inclusion $V_0\in\qU_X$ will follow as soon as we check that $V_{k+1}^2\subset V_{k}$ for every $k\in\w$. Take any points $x,y,z\in X$ with $(x,y)\in V_{k+1}$ and $(y,z)\in V_{k+1}$. By the definition of the entourage $V_{k+1}$, the ball $B(x;V_{k+1})\ni y$ does not intersect the set $F_{n_x-1}$, which implies that $n_y\ge n_x$. By the same reason, $n_z\ge n_y$.
The inclusions $y\in B(x;V_{k+1})$ and $z\in B(y;V_{k+1})$ imply $d(x,y)\le 3^{-k-1}\cdot\min\{3^{-n_x},d(x,F_{n_x-1})\}\le \frac13d(x,F_{n_x-1})$ and $d(y,z)\le 3^{-k-1}\cdot\min\{3^{-n_y},d(y,F_{n_y-1})\}$.
It follows that $$d(y,F_{n_y-1})\le d(y,F_{n_x-1})\le d(x,F_{n_x-1})+d(x,y)\le d(x,F_{n_x-1})+\tfrac13d(x,F_{n_x-1})\le 2d(x,F_{n_x-1}).$$
Then $$
\begin{aligned}
d(x,z)&\le d(x,y)+d(y,z)\le 3^{-k-1}\cdot(\min\{3^{-n_x},d(x,F_{n_x-1})\}+\min\{3^{-n_y},d(y,F_{n_y-1})\})\le\\
&\le3^{-k-1}\cdot(\min\{3^{-n_x},d(x,F_{n_x-1})\}+\min\{3^{-n_x},2d(x,F_{n_x-1})\})\le
3^{-k}\cdot\min\{3^{-n_x},d(x,F_{n_x-1})\}
\end{aligned}
$$
and hence $(x,z)\in V_k$.
So $V_{k+1}V_{k+1}\subset V_k$ for all $k\in\w$, which implies that the family $\{V_k\}_{k\in\w}$ is a base of a quasi-uniformity on $X$. Then $V_0\in\{V_k\}_{k\in\w}\subset\qU_X$ and $U\in\qU_X$.
\smallskip
Proposition~\ref{p3.6n} completes the proof.
\end{proof}

Following \cite{Vaugh} and \cite{BMZ} by $\mathfrak q_0$ we denote the smallest cardinality of a metrizable separable space which is not a $Q$-set. By Theorem 2 of \cite{BMZ}, $\mathfrak p\le \mathfrak q_0\le \min\{\mathfrak b,\log(\mathfrak c^+)\}$, which implies that $\mathfrak q_0=\mathfrak c$ under Martin's Axiom. We recall that $\mathfrak p$ is the smallest cardinality of a family $\A$ of infinite subsets of $\w$ such that for every finite subfamily $\F\subset\A$ the intersection $\bigcap\F$ is infinite and  for every infinite subset $A\subset\w$ there is a set $F\in\F$ such that $A\setminus F$ is infinite.

Denote by $\mathfrak q_\w$ the smallest cardinality of a metrizable separable space, which is not a $Q_\w$-set. Taking into account that each $Q$-set is a $Q_\w$-set and the real line is not a $Q_\omega$-set, we conclude that $\mathfrak p\le\mathfrak q_0\le \mathfrak q_\w\le\mathfrak c$, which implies that $\mathfrak p=\mathfrak q_\w=\mathfrak c$ under Martin's Axiom.

\begin{corollary} If $\mathfrak q_\w=\mathfrak c$, then for a metrizable separable space $X$ the following conditions are equivalent:
\begin{enumerate}
\item $\pU_X=\qU_X$;
\item $\pU_X=\pU_X^2$;
\item $|X|<\mathfrak c$.
\end{enumerate}
\end{corollary}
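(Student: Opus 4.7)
The plan is to reduce the corollary to Proposition~\ref{p3.7n}, which already establishes the equivalence $(1)\Leftrightarrow(2)\Leftrightarrow$ ``$X$ is a $Q_\w$-set'' for \emph{every} metrizable space $X$. Thus it remains only to prove, under the hypothesis $\mathfrak q_\w=\mathfrak c$ and the additional assumption that $X$ is separable, that being a $Q_\w$-set is equivalent to $|X|<\mathfrak c$.

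For the implication ``$Q_\w$-set $\Rightarrow |X|<\mathfrak c$'' I would directly invoke the cardinality bound in Proposition~\ref{p3.6n}, namely $|X|<\w\cdot 2^{w(X)}$. Since $X$ is separable and metrizable, $w(X)\le\w$, so $2^{w(X)}\le 2^\w=\mathfrak c$, and therefore $|X|<\w\cdot\mathfrak c=\mathfrak c$, as required. The converse implication ``$|X|<\mathfrak c \Rightarrow X$ is a $Q_\w$-set'' is immediate from the definition of $\mathfrak q_\w$ as the smallest cardinality of a metrizable separable space failing to be a $Q_\w$-set: under the hypothesis $\mathfrak q_\w=\mathfrak c$, a metrizable separable space $X$ with $|X|<\mathfrak c$ has cardinality strictly below the threshold and so must be a $Q_\w$-set.

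Chaining these two observations with Proposition~\ref{p3.7n} closes the loop $(1)\Leftrightarrow(2)\Leftrightarrow|X|<\mathfrak c$. There is no real obstacle here beyond keeping track of cardinal arithmetic: in particular, verifying that the strict inequality in Proposition~\ref{p3.6n} survives the passage $\w\cdot 2^{w(X)}\le\mathfrak c$, which it does because $|X|<\w\cdot 2^{w(X)}\le\mathfrak c$ directly yields $|X|<\mathfrak c$. No new topological input is needed beyond Propositions~\ref{p3.6n} and~\ref{p3.7n} and the definition of $\mathfrak q_\w$ recalled just before the corollary.
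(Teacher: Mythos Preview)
Your proposal is correct and is exactly the intended argument: the paper states the corollary without proof, expecting the reader to combine Proposition~\ref{p3.7n} (giving $(1)\Leftrightarrow(2)\Leftrightarrow$ ``$Q_\w$-set'') with the definition of $\mathfrak q_\w$ and the cardinality bound, just as you do. A minor remark: you could cite the bound $|X|<2^{w(X)}$ directly from the last sentence of Proposition~\ref{p3.7n} rather than going back to Proposition~\ref{p3.6n}, but this is cosmetic.
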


On the other hand, we have the following ZFC-result.

\begin{proposition}\label{p3.10n} Each metrizable space $X$ has $\qU_X=\pU_X^2$.
\end{proposition}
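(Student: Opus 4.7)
The plan is the following. Since $\qU_X$ is a quasi-uniformity, $\qU_X = \qU_X\qU_X \subset \pU_X\pU_X = \pU_X^2$ is immediate, so only the opposite inclusion $\pU_X^2 \subset \qU_X$ needs proof. Because $\pU_X$ is closed under finite intersections, the pre-uniformity $\pU_X^2$ has the base $\{U^2 : U\in\pU_X\}$, hence it suffices to produce, for each given $U\in\pU_X$, an entourage $W\in\qU_X$ with $W\subset U^2$. Replacing each ball $B(x;U)$ by an open subset containing $x$, I may assume without loss of generality that $B(x;U)$ is open for every $x\in X$. Fix a metric $d\le 1$ generating the topology of $X$ and set $X_n=\{x : B_d(x;3^{-n})\subset B(x;U)\}$ and $F_n=\overline{X_n}$; the openness of each $B(x;U)$ yields $X=\bigcup_{n\in\w}X_n\subset\bigcup_{n\in\w}F_n$.

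The heart of the argument is the following claim: \emph{for every $n\in\w$ and $x\in F_n$ one has $B_d(x;3^{-n})\subset B(x;U^2)$.} Given $y$ with $\e:=d(x,y)<3^{-n}$, set $r(x)=d(x,X\setminus B(x;U))>0$; since $x\in\overline{X_n}$, I can pick $w\in X_n$ with $d(x,w)<\min(r(x),\,3^{-n}-\e)$. Then $w\in B_d(x;r(x))\subset B(x;U)$, and the triangle inequality yields $d(w,y)<3^{-n}$, so $y\in B_d(w;3^{-n})\subset B(w;U)$ because $w\in X_n$; therefore $(x,y)\in U^2$.

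For each $x\in X$ let $n_x=\min\{n : x\in F_n\}$, so that $x\notin F_{n_x-1}$ and $d(x,F_{n_x-1})>0$ under the conventions $F_{-1}=\emptyset$ and $d(x,\emptyset)=1$. Define neighborhood assignments $W_k\in\pU_X$, $k\in\w$, by
$$B(x;W_k):=B_d\bigl(x;\;3^{-k-1}\min(3^{-n_x},\,d(x,F_{n_x-1}))\bigr).$$
The claim from the previous paragraph gives $W_0\subset U^2$, since the radius of $B(x;W_0)$ does not exceed $3^{-n_x-1}<3^{-n_x}$. The verification of $W_{k+1}^2\subset W_k$ is identical to the one carried out in the proof of Proposition~\ref{p3.7n}: the bound $d(x,y)\le\tfrac13 d(x,F_{n_x-1})$ forces $n_y\ge n_x$ and $d(y,F_{n_y-1})\le 2\,d(x,F_{n_x-1})$, after which the triangle inequality supplies the required estimate on $d(x,z)$. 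Hence $\{W_k\}_{k\in\w}$ is a base of a quasi-uniformity inside $\pU_X$, placing $W_0$, and consequently $U^2$, in $\qU_X$.

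The main difficulty is the claim in the middle paragraph; it is precisely at this step that having a square $U^2$ rather than a single $U$ supplies the slack needed to route from $x$ through an auxiliary point $w\in X_n$ and thereby dispense with the $Q_\w$-set assumption that was essential to Proposition~\ref{p3.7n}.
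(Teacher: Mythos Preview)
Your proof is correct and follows essentially the same approach as the paper: both define $X_n=\{x:B_d(x;3^{-n})\subset B(x;U)\}$, pass to the closures $F_n=\overline{X_n}$, set $n_x=\min\{n:x\in F_n\}$, define the neighborhood assignments $W_k$ (the paper calls them $V_k$) by the same formula, and invoke the $V_{k+1}^2\subset V_k$ computation from Proposition~\ref{p3.7n}. Your central claim $B_d(x;3^{-n})\subset B(x;U^2)$ for $x\in F_n$ is in fact slightly sharper than the paper's stated $B_d(x;3^{-n-1})\subset B(x;UU)$, because you let the auxiliary point $w$ depend on the target $y$; either version suffices.
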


\begin{proof} Given any neighborhood assignment $U\in\pU_X$ we shall find an entourage $V\in\qU_X$ such that $V\subset UU$. Fix a metric $d\le 1$ generating the topology of $X$. For every $n\in\w$ consider the set $X_n=\{x\in X:B_d(x;3^{-n})\subset B(x;U)\}$ and observe that $X=\bigcup_{n\in\w}X_n=\bigcup_{n\in\w}\overline{X}_n$.  For every point $x\in X$ let $n_x=\min\{n\in\w:x\in \overline{X}_n\}$ Observe that $B_d(x;3^{-n_x})\subset B(x;U)$. Repeating the argument of the proof of the implication $(3)\Ra(1)$ in the proof of Proposition~\ref{p3.7n}, we can show that $\overline{X}_n\subset \{x\in X:B_d(x;3^{-n-1})\subset B(x;UU)\}$.

For every number $k\in\w$ consider the neighborhood assignment $V_k$ on $X$ assigning to each point $x\in X$ the open ball $B(x;V_k):=B_d(x;3^{-k-1}\cdot\min\{3^{-n_x},d(x;\overline{X}_{n_x-1})\})$. Here we assume that $X_{-1}=\overline{X}_{-1}=\emptyset$ and $d(x,\emptyset)=1$. It follows that $B(x;V_0)\subset B_d(x;3^{-n_x})\subset B(x;UU)$ and hence $V_0\subset UU$. Repeating the argument of the proof of Proposition~\ref{p3.7n}, we can show that $V_0\in\{V_k\}_{k\in\w}\subset\qU_X$ and hence $U\in\qU_X$.
\end{proof}

Propositions~\ref{p3.2n}--\ref{p3.7n} and \ref{p3.10n} imply that for metrizable spaces the diagram describing the inclusion relations between the canonical pre-uniformities $\U_X$, $\qU_X^{\pm n}$, $\qU^{\mp n}$,   $\qU_X^{\wedge n}$, $\qU^{\vee n}$, $\pU_X^{\pm n}$, $\pU^{\mp n}$,   $\pU_X^{\wedge n}$, $\pU^{\vee n}$ collapses to the following form.
{\small
$$
\xymatrix{
&&\pU_X^{\pm 2}\ar@{=}[rrdd]&&&&&\pU_X^{\pm 1}\ar[rrdd]\\
&&\qU_X^{\pm 2}\ar@{=}[rd]\ar[u]&&&&&\qU_X^{\pm 1}\ar[rd]\ar[u]\\
\pU_X^{\wedge 2}\ar@{=}[rrdd]&\qU_X^{\wedge 2}\ar@{=}[l]\ar[ru]\ar@{=}[rd]&\U_X\ar@{=}[l]&\qU_X^{\vee 2}\ar[r]\ar@/^35pt/[rrr]&\pU_X^{\vee 2}\ar[r]&\pU_X^{\wedge 1}\ar[rruu]\ar[rrdd]&\qU_X^{\wedge 1}\ar[l]\ar[ru]\ar[rd]&&\qU_X^{\vee 1}\ar[r]&\pU_X^{\vee 1}\\
&&\qU_X^{\mp 2}\ar[ru]\ar@{=}[d]&&&&&\qU_X^{\mp 1}\ar[ru]\ar[d]\\
&&\pU_X^{\mp 2}&&&&&\pU_X^{\mp 1}\ar[rruu]
}
$$
}
\begin{example} For a $T_1$-space $X$ with a unique non-isolated point $\infty$ we get $\pU_X=\qU_X$ and $$\U_X=\pU^{\mp2}_X\ne \pU^{\pm2}=\pU^{\wedge 1}\ne \pU_X\ne\pU_X^{-1}\ne\pU^{\vee 1}=\U_{\dot X}$$
where $\dot X$ denotes $X$ endowed with the discrete topology.
\end{example}

\begin{proof} By Propositions~\ref{p3.2n} and \ref{p3.4n}, $\pU_X=\qU_X$ and $\U_X=\U^{\mp2}_X$. Denote by $\mathcal N_\infty$ the family of all neighborhood of the (unique) non-isolated point $\infty\in X$ and observe that
$$\begin{aligned}
&\pU_X=\big\langle \big\{\Delta_X\cup(\{\infty\}\times O_\infty):O_\infty\in\mathcal N_\infty\big\}\big\rangle,\\
&\pU_X^{-1}=\big\langle \big\{\Delta_X\cup(O_\infty\times\{\infty\}):O_\infty\in\mathcal N_\infty\big\}\big\rangle,\\
&\pU_X^{\vee1}=\big\langle \big\{\Delta_X\big\}\big\rangle=\U_{\dot X},\\
&\pU_X^{\wedge 1}=\big\langle \big\{\Delta_X\cup(\{\infty\}\times O_\infty)\cup(O_\infty\times\{\infty\}):O_\infty\in\mathcal N_\infty\big\}\big\rangle=\U_X^{\pm2},\\
&\pU^{\mp2}=\big\langle \big\{\Delta_X\cup(O_\infty\times O_\infty):O_\infty\in\mathcal N_\infty\big\}\big\rangle=\U_X,
\end{aligned}
$$
which yields the required (in)equalities.
\end{proof}

\section{Verbal covering properties of topological spaces}\label{s3}

Cardinal characteristics of the pre-uniformities $\pU_X$, $\qU_X$ and $\U_X$ or (Boolean operations over) their verbal powers can be considered as cardinal characteristics of the topological space $X$.

Namely, for any binary word $v\in \{+,-\}^{<\w}$ and any topological space $X$ consider the cardinals
$$
\begin{aligned}
&\ell^v(X):=\ell(\pU_X^v),& &\bar\ell^v(X):=\bar\ell(\pU_X^v),& &L^v(X):=L(\pU_X^v),& &\bar L^v(X):=\bar L(\pU_X^v),&\\
&q\ell^v(X):=\ell(\qU_X^v),&  &q\bar\ell^v(X):=\bar\ell(\qU_X^v),& &qL^v(X):=L(\qU_X^v),&  &q\bar L^v(X):=\bar L(\qU_X^v),&\\
&u\ell^v(X):=\ell(\U_X^v),&  &u\bar\ell^v(X):=\bar\ell(\U_X^v),& &uL^v(X):=L(\U_X^v),&  &u\bar L^v(X):=\bar L(\U_X^v).&
\end{aligned}
$$
We also put $u\ell(X):=\ell(\U_X)$ and $uL(X)=L(\U_X)$.

Taking into account that $\U_X^2=\U_X=\U_X^{-1}$  we conclude that  $$u\ell^v(X)=u\bar\ell^v(X)=u\ell(X)\mbox{ and }uL^v(X)=u\bar L^v(X)=u L(X)$$
for every binary word $v\in\{+,-\}^{<\w}\setminus\{\emptyset\}$.
So, all cardinal characteristics $u\ell^v(X)$, $u\bar \ell^v(X)$, $uL^v(X)$, $u\bar L^v(X)$ with $v\ne\emptyset$ collapse to two cardinal characteristics $u\ell(X)$ and $uL(X)$ (of which $uL$ determines $u\ell$).

On the other hand, the equality $\qU_X^2=\qU_X$ implies that for every word $v\in\{+,-\}^{<\w}$ the cardinal $q\ell^v(X)$ (resp. $q\bar \ell^v(X)$) is equal to $q\ell^{\pm n}(X)$ or $q\ell^{\mp n}(X)$ (resp. $q\bar \ell^{\pm n}(X)$ or $q\bar \ell^{\mp n}(X)$) for some $n\in\w$.

For every $n\in\IN$ and a topological space $X$ consider the cardinals
$$
\begin{aligned}
&\ell^{\wedge n}(X):=\ell^{\wedge n}(p\U_X),&\;\;&q\ell^{\wedge n}(X):=\ell^{\wedge n}(q\U_X)&\\
&\ell^{\vee n}(X):=\ell^{\vee n}(p\U_X^{\pm n})&,\;\;&q\ell^{\vee n}(X):=\ell^{\vee n}(q\U_X)&
\end{aligned}
$$ and $$\ell^{\w}(X)=\min\{\ell^v(X):v\in\{+,-\}^{<\w}\},\;\;
q\ell^{\w}(X)=\min\{q\ell^v(X):v\in\{+,-\}^{<\w}\}.$$

The diagram drawn at the beginning of Section~\ref{s3n} and the monotonicity of the boundedness number $\ell$ yield the following diagram describing the inequalities between the cardinal characteristics $u\ell$, $\ell^{\pm n}$, $\ell^{\mp n}$, $\ell^{\wedge n}$, $\ell^{\vee n}$, $q\ell^{\pm n}$, $q\ell^{\mp n}$, $q\ell^{\wedge n}$, $q\ell^{\vee n}$ for $n\in\IN$.
$$
\xymatrix{
&&&\ell^{\pm n}\ar[rrdd]\\
&&&q\ell^{\pm n}\ar[rd]\ar[u]\\
\ell^{\vee(n+1)}\ar[r]&\ell^{\wedge n}\ar[rruu]\ar[rrdd]&q\ell^{\wedge n}\ar[l]\ar[ru]\ar[rd]&u\ell\ar[r]\ar[l]\ar[u]\ar[d]&q\ell^{\vee n}\ar[r]&\ell^{\vee n}\ar[r]&\ell^{\wedge (n-1)}\\
&&&q\ell^{\mp n}\ar[ru]\ar[d]\\
&&&\ell^{\mp n}\ar[rruu]
}
$$

\begin{definition}
Let $v\in\{+,-\}^{<\w}$ be a binary word. A topological space $X$ is defined to be
\begin{itemize}
\item {\em $v$-compact} if $L^v(X)\le\w$;
\item {\em $v$-Lindel\"of} if $\ell^v(X)\le\w$;
\item {\em weakly $v$-Lindel\"of} if $\bar\ell^v(X)\le\w$.
\end{itemize}
\end{definition}

Observe that a topological space $X$ is compact (resp. Lindel\"of, weakly Lindel\"of) if and only if $X$ is $+$-compact, (resp. $+$-Lindel\"of, weakly $+$-Lindel\"of).

\begin{problem} Given two distinct binary words $v,w\in\{+,-\}^{<\w}$ study the relations between the cardinal characteristics $\ell^v$, $\bar\ell^v$, $\ell^w$, $\bar \ell^w$. Are these cardinal characteristics pairwise distinct?
\end{problem}

Observe that for any neighborhood assignment $U$ on a topological space $X$ and any subset $A\subset X$ we get $\bar A\subset B(A;U^{-1})$. This implies that for any binary word $v\in\{+,-\}^{<\w}$ we get the following diagram:
$$
\xymatrix{
u\ell\ar[r]&q\bar\ell^{v\mbox{-}}\ar@{=}[r]\ar[d]&q\ell^{v\mbox{-}}\ar[r]\ar[d]&q\bar \ell^v\ar[r]\ar[d]&q\ell^v\ar[d]\\
&\bar\ell^{v\mbox{-}}\ar[r]&\ell^{v\mbox{-}}\ar[r]&\bar\ell^v\ar[r]&\ell^v.
}
$$

Using Lemma~\ref{l2.2} it is easy to prove the following proposition showing that all star-covering properties of topological spaces can be expressed by the cardinal characteristics $\ell^v$, $\bar\ell^v$, for suitable alternating words $v$.

\begin{proposition}\label{p1.5} For every $n\in\w$ we have the equalities:
$$l^{*n}=\ell^{\mp 2n},\;\;\bar l^{*n}=\bar\ell^{\mp 2n},\;\;l^{*n\frac12}=\ell^{\pm(2n+1)},\;\;\bar l^{*n\frac12}=\bar\ell^{\pm(2n+1)},\mbox{ \ and}$$
$$L^{*n}=L^{\mp 2n},\;\;\bar L^{*n}=\bar L^{\mp 2n},\;\;L^{*n\frac12}=L^{\pm(2n+1)},\;\;\bar L^{*n\frac12}=\bar L^{\pm(2n+1)}.$$
\end{proposition}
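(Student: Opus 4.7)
The plan is to establish a general dictionary translating stars of open covers into balls under verbal powers of neighborhood assignments, and then read off all eight equalities from it.

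First, I would set up the correspondence between $\pU_X$ and open covers of $X$. To each neighborhood assignment $U\in\pU_X$ associate the open cover $\U:=\{B(x;U):x\in X\}$; conversely, to each open cover $\U$ of $X$ associate a neighborhood assignment $U$ by selecting, for every $x\in X$, a set $U_x\in\U$ with $x\in U_x$ and declaring $B(x;U):=U_x$, so that $\{B(x;U):x\in X\}\subset\U$ is a \emph{subcover} of $\U$. The key combinatorial observation is that every subfamily $\V$ of $\{B(x;U):x\in X\}$ has the form $\{B(a;U):a\in A\}$ for some $A\subset X$ with $|A|\le|\V|$, and then $\bigcup\V=B(A;U)$.

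Second, using Lemma~\ref{l2.2} together with the composition law $B(A;UV)=B(B(A;U);V)$, I would prove by induction on $n$ the translation formulas
$$B(A;U^{\mp 2n})=\St^n(A;\U)\qquad\text{and}\qquad B(A;U^{\pm(2n+1)})=\St^n(B(A;U);\U)$$
for every $A\subset X$ and $n\in\w$, where $\U=\{B(x;U):x\in X\}$. The first is a direct iteration of Lemma~\ref{l2.2}, since $U^{\mp 2n}=(U^{-1}U)^n$; the second follows from the factorisation $U^{\pm(2n+1)}=U\cdot U^{\mp 2n}$.

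Third, I would use these formulas to read off each equality. Taking $l^{*n}=\ell^{\mp 2n}$ as the representative case: the inequality $\ell^{\mp 2n}(X)\le l^{*n}(X)$ is proved by, for an arbitrary $U\in\pU_X$, applying the definition of $l^{*n}(X)$ to the cover $\U$ to obtain $A\subset X$ with $\St^n(A;\U)=X$, which by the first translation formula yields $B(A;U^{\mp 2n})=X$. The reverse inequality is proved by associating to any open cover $\U$ a neighborhood assignment $U$ as above and invoking the translation formula for the subcover $\{B(x;U):x\in X\}\subset\U$; since stars with respect to a subcover are contained in stars with respect to the full cover, a set $A$ with $B(A;U^{\mp 2n})=X$ satisfies $X=\St^n(A;\{B(x;U):x\in X\})\subset\St^n(A;\U)$. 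The equality $l^{*n\frac12}=\ell^{\pm(2n+1)}$ is proved analogously, using the second translation formula together with the parameterisation $\V\leftrightarrow A$ of subfamilies of $\{B(x;U):x\in X\}$, which gives $\bigcup\V=B(A;U)$ and $|A|\le|\V|$.

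Finally, the dense versions $\bar l^{*n}=\bar\ell^{\mp 2n}$ and $\bar l^{*n\frac12}=\bar\ell^{\pm(2n+1)}$ follow verbatim by replacing ``$=X$'' with ``is dense in $X$'', since the two translated sets are literally equal (not merely comparable), and the sharp versions $L^{*n}$, $L^{*n\frac12}$, $\bar L^{*n}$, $\bar L^{*n\frac12}$ are obtained by using the strict cardinality inequalities $|A|<\kappa$ and $|\V|<\kappa$ throughout. No serious obstacle is anticipated; the only point requiring care is the slightly asymmetric step where the neighborhood assignment built from an open cover $\U$ produces only a \emph{subcover} of $\U$, but this goes in exactly the direction of inclusion of stars that we need.
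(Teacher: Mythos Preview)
Your proposal is correct and is precisely the argument the paper has in mind: the paper's entire proof consists of the single sentence ``Using Lemma~\ref{l2.2} it is easy to prove the following proposition\ldots'', and you have written out exactly the details this sentence suppresses. Your handling of the one nontrivial point --- that the neighborhood assignment built from an open cover $\U$ yields only a subcover $\U'\subset\U$, and that the inclusion $\St^n(B;\U')\subset\St^n(B;\U)$ goes the right way --- is correct and is the only place where care is needed.
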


The initial cardinal characteristics $\ell^{\pm1}$, $\ell^{\mp1}$, $\ell^{\vee1}$ and $q\ell^{\pm1}$, $q\ell^{\mp1}$, $q\ell^{\vee1}$ have nice inheritance properties.

\begin{proposition}\label{p1.6} Any closed subspace $F$ of a topological space $X$ has
\begin{enumerate}
\item $\ell^{\pm1}(F)\le \ell^{\pm1}(X)$ and $\ell^{\vee1}(F)\le \ell^{\vee1}(X)$;
\item $q\ell^{\pm1}(F)\le q\ell^{\pm1}(X)$ and $q\ell^{\vee1}(F)\le q\ell^{\vee1}(X)$.
\end{enumerate}
\end{proposition}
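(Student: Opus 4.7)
The plan is to prove all four inequalities by one common strategy: given a neighborhood assignment $V$ on $F$ (in $\pU_F$ for part~(1) or $\qU_F$ for part~(2)), \emph{extend} $V$ to a neighborhood assignment $U$ on $X$ at the same level (in $\pU_X$ or $\qU_X$) satisfying
\[
B(a;U)\cap F\subset B(a;V)\text{ for }a\in F\quad\text{and}\quad B(a;U)\cap F=\emptyset\text{ for }a\in X\setminus F.
\]
Given such $U$, applying the relevant boundedness of $X$ produces $A\subset X$ with $|A|\le\kappa$ and $X=B(A;U)$ (resp.\ $X=B(A;U\cap U^{-1})$). The second property forces every $a\in A$ whose $U$-ball meets $F$ to lie in $F$, so $A\cap F$ witnesses $\ell^{\pm1}(F)\le\kappa$ via the first property. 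For the $\ell^{\vee1}$-variant one additionally notes that if $a,y\in F$ and $y\in B(a;U^{-1})$, then $a\in B(y;U)\cap F\subset B(y;V)$, hence $y\in B(a;V^{-1})$, so $y\in B(a;V\cap V^{-1})$.

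For part~(1) the extension is direct. For each $x\in F$ pick an open neighborhood $O_x$ of $x$ in $X$ with $O_x\cap F\subset B(x;V)$ (available since $B(x;V)$ is a neighborhood of $x$ in the subspace $F$), and set $B(x;U):=O_x$ for $x\in F$ and $B(x;U):=X\setminus F$ for $x\in X\setminus F$ (open since $F$ is closed). Both required properties are immediate and $U\in\pU_X$.

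For part~(2), fix $V\in\qU_F$ and a descending chain $V=V_0\supset V_1\supset\cdots$ in $\pU_F$ with $V_{n+1}^2\subset V_n$. Replace each $V_n$ by $\mathring V_n$ defined by $B(x;\mathring V_n):=\mathrm{int}_F B(x;V_n)$; a direct check shows $\mathring V_{n+1}^2\subset\mathring V_n$ (given $(x,y),(y,z)\in\mathring V_{n+1}$, an open $M\subset F$ with $z\in M\subset B(y;V_{n+1})$ satisfies $M\subset V_{n+1}^2(x)\subset B(x;V_n)$, so $z\in\mathrm{int}_F B(x;V_n)$). Hence we may assume every $B(x;V_n)$ is open in $F$, and for each $x\in F$ and $n\in\w$ choose an open $O_{x,n}\subset X$ with $O_{x,n}\cap F=B(x;V_n)$. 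Define $U_n$ by $B(x;U_n):=O_{x,n}\cup(X\setminus F)$ for $x\in F$ and $B(x;U_n):=X\setminus F$ for $x\in X\setminus F$. Each $U_n$ lies in $\pU_X$, and a case split on whether $y$ and $z$ lie in $F$ verifies $U_{n+1}^2\subset U_n$: when $x,y,z\in F$ the inclusion reduces to $V_{n+1}^2\subset V_n$, and in every remaining case $z\in X\setminus F\subset B(x;U_n)$. Thus $\{U_n\}$ is a base of a quasi-uniformity on $X$, making $U:=U_0$ a member of $\qU_X$ with the required extension properties.

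The main obstacle is part~(2): arranging that the extension $U$ belong to $\qU_X$. The two devices---passing to $F$-interiors so that $V_n$-balls are open in $F$ and their compositions land in open $F$-balls, and adjoining $X\setminus F$ to each $X$-ball based at a point of $F$ so as to absorb composition chains that leave $F$---together solve exactly this problem.
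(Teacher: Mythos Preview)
Your proof is correct and follows the same overall strategy as the paper: extend a neighborhood assignment $V$ on $F$ to one on $X$ so that balls of points outside $F$ miss $F$, then intersect a witnessing set for $X$ with $F$.

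The paper's extension is simpler, however. It uses the single formula
\[
\tilde V \;=\; V\cup\big(X\times(X\setminus F)\big),
\]
so that $B(x;\tilde V)=B(x;V)\cup(X\setminus F)$ for $x\in F$ and $B(x;\tilde V)=X\setminus F$ for $x\notin F$. Since $B(x;V)$ is a neighborhood of $x$ in $F$, the set $B(x;V)\cup(X\setminus F)$ is automatically a neighborhood of $x$ in $X$; no openness is needed. With this definition the verification $\tilde V_{n+1}^{\,2}\subset\tilde V_n$ is a two-line case split on whether the endpoint lies in $F$, and part~(2) follows immediately from part~(1) without any preliminary modification of the chain $(V_n)$.

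Your detour through $F$-interiors in part~(2) is forced only by your decision to pick \emph{open} sets $O_{x,n}\subset X$ with $O_{x,n}\cap F=B(x;V_n)$ (equality, not inclusion): equality requires $B(x;V_n)$ to be open in $F$, and equality is what makes your composition check go through. The paper sidesteps this by allowing the extended balls to be mere neighborhoods. Both arguments work; the paper's is shorter and uses one construction for both parts.
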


\begin{proof} Given a neighborhood assignment $V\in\pU_F$ on $F$, consider the neighborhood assignment $\tilde V=V\cup (X\times (X\setminus F))$ on $X$ and observe that $B(x;\tilde V)\cap F=B(x;V)$ for every $x\in F$. By definitions of the cardinals $\ell^{\pm1}(X)$ and $\ell^{\vee1}(X)$, there are subsets $A,A_1\subset X$ of cardinality $|A|\le \ell^{\pm1}(X)$ and $|A_1|\le \ell^{\vee1}(X)$ such that $X=B(A;\tilde V)$ and $X=B(A_1;\tilde V\cap \tilde V^{-1})$. Then $A\cap F$ is a subset of cardinality $|A\cap F|\le |A|\le \ell^{\pm1}(X)$ such that $F=B(A\cap F;V)=F\cap B(A\cap F;\tilde V)$, witnessing that $\ell^{\pm1}(F)\le \ell^{\pm1}(X)$. On the other hand, $A_1\cap F$ is a subset of cardinality $|A_1\cap F|\le |A_1|\le \ell^{\vee1}(X)$ such that $F\subset B(A_1\cap F;V\cap V^{-1})$.
Indeed, for every $x\in F$ there is a point $a\in A_1$ such that $x\in B(a;\tilde V\cap \tilde V^{-1})$. Then $B(a;\tilde V)\cap F\ne\emptyset$, which implies $a\in F$. The choice of the neighborhood assignment $\tilde V$ guarantees that $x\in F\cap B(a;\tilde V)=B(a;V)$ and $a\in F\cap B(x;\tilde V)=B(x;V)$. So, $x\in B(a;V\cap V^{-1})$ and the set $A_1\cap F$ witnesses that $\ell^{\vee1}(F)\le \ell^{\vee1}(X)$.

If $V\in \qU_F$, then $\tilde V\in q\U_X$. Indeed, for the neighborhood assignment $V$ we could find a sequence of neighborhood assignments $(V_n)_{n\in\w}$ in $\qU_F$ such that $V_0=V$ and $V_{n+1}^2\subset V_n$ for every $n\in\w$. For every $n\in\w$ consider the neighborhood assignment $\tilde V_n=V_n\cup(X\times (X\setminus F))$ on $X$. We claim that $\tilde V_{n}^2\subset \tilde V_{n-1}$ for all $n>0$. Given any number $n>0$ and pair $(x,y)\in\tilde V_{n}^2$, we should check that $(x,y)\in\tilde V_{n-1}$. This is trivially true if $y\notin F$. So, we assume that $y\in F$. Since $(x,y)\in\tilde V_n^2$, there is a point $z\in X$ such that $(x,z),(z,y)\in\tilde V_{n}$. Since $y\in F$, the definition of $\tilde V_n$ implies that $z\in F$ and $x\in F$. Then $(x,z),(z,y)\in (F\times F)\times\tilde V_n=V_n$ and hence $(x,y)\in V_n^2\subset V_{n-1}^2\subset\tilde V_{n-1}$. It follows that the set $\{V_n\}_{n\in\w}$ generates a quasi-uniformity $\V=\{V\subset X\times X:\tilde V_n\subset V$ for some $n\in\w\}$ consisting of neighborhood assignments on $X$.

Consequently, $\tilde V=\tilde V_0\in \V\subset\qU_X$. In this case we can additionally assume that the subsets $A,A_1\subset X$ have cardinality $|A|\le q\ell^{\pm1}(X)$ and $|A_1|\le q\ell^{\vee1}(X)$, which implies $q\ell^{\pm1}(F)\le q\ell^{\pm1}(X)$ and $q\ell^{\vee1}(F)\le q\ell^{\vee1}(X)$.
\end{proof}

\begin{proposition}\label{p1.7} Any open subspace $U$ of a topological space $X$ has
\begin{enumerate}
\item $\ell^{\mp1}(U)\le \ell^{\mp1}(X)$ and $\ell^{\vee1}(U)\le \ell^{\vee1}(X)$;
\item $q\ell^{\mp1}(U)\le q\ell^{\pm1}(X)$ and $q\ell^{\vee1}(U)\le q\ell^{\vee1}(X)$.
\end{enumerate}
\end{proposition}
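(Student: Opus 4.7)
The plan is to mimic the proof of Proposition~\ref{p1.6} using the extension construction dual to the one employed there. For an open subspace $U\subset X$ and a neighborhood assignment $V\in\pU_U$, I set
$$\tilde V:=V\cup\bigl((X\setminus U)\times X\bigr).$$
Because $V\subset U\times U$ and $U$ is open, the ball $B(x;\tilde V)=B(x;V)$ is a neighborhood of $x$ in $X$ for every $x\in U$, while $B(x;\tilde V)=X$ for $x\in X\setminus U$; hence $\tilde V\in\pU_X$.

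The next step is to compute how $\tilde V$ behaves under the operations $E\mapsto E^{-1}$ and $E\mapsto E\cap E^{-1}$ that define $\pU_X^{\mp1}$ and $\pU_X^{\vee1}$. A direct calculation gives $\tilde V^{-1}=V^{-1}\cup\bigl(X\times(X\setminus U)\bigr)$, and hence
$$B(a;\tilde V^{-1})=B(a;V^{-1})\cup(X\setminus U)\text{ if }a\in U,\qquad B(a;\tilde V^{-1})=X\setminus U\text{ if }a\notin U,$$
while $\tilde V\cap\tilde V^{-1}=(V\cap V^{-1})\cup\bigl((X\setminus U)\times(X\setminus U)\bigr)$, so
$$B(a;\tilde V\cap\tilde V^{-1})=B(a;V\cap V^{-1})\text{ if }a\in U,\qquad B(a;\tilde V\cap\tilde V^{-1})=X\setminus U\text{ if }a\notin U.$$
In both cases the balls centred outside $U$ contribute nothing to a covering of $U$. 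Consequently, given any $A\subset X$ with $X=B(A;\tilde V^{-1})$ (respectively $X=B(A;\tilde V\cap\tilde V^{-1})$), the restricted set $A':=A\cap U$ already satisfies $U=B(A';V^{-1})$ (respectively $U=B(A';V\cap V^{-1})$). Applying the definitions of $\ell^{\mp1}(X)$ and $\ell^{\vee1}(X)$ to $\tilde V$ and passing to $A'$ produces the two inequalities of item~(1).

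Item~(2) reduces to verifying that $\tilde V\in\qU_X$ whenever $V\in\qU_U$. I pick a sequence $(V_n)_{n\in\w}\subset\qU_U$ with $V_0=V$ and $V_{n+1}^2\subset V_n$, and set $\tilde V_n:=V_n\cup\bigl((X\setminus U)\times X\bigr)$. The inclusion $\tilde V_{n+1}^2\subset\tilde V_n$ follows from a two-case analysis: if $x\in X\setminus U$ the pair $(x,y)$ already lies in the $(X\setminus U)\times X$ part of $\tilde V_n$; if $x\in U$ and $(x,y)\in\tilde V_{n+1}^2$ is witnessed by a midpoint $z$, then $(x,z)\in\tilde V_{n+1}$ forces $(x,z)\in V_{n+1}$ (the other alternative $x\in X\setminus U$ is unavailable), which forces $z\in U$, and a second application forces $(z,y)\in V_{n+1}$ and $y\in U$, placing $(x,y)\in V_{n+1}^2\subset V_n\subset\tilde V_n$. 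Thus $\{\tilde V_n\}_{n\in\w}$ is a base of a quasi-uniformity on $X$, so $\tilde V\in\qU_X$, and repeating the argument of item~(1) inside $\qU_X$ yields item~(2).

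The only subtlety is choosing the correct extension: Proposition~\ref{p1.6} uses the right-sided formula $V\cup\bigl(X\times(X\setminus F)\bigr)$ to handle $\pm1$ and $\vee1$ for closed subspaces, whereas the left-sided dual $V\cup\bigl((X\setminus U)\times X\bigr)$ is what handles $\mp1$ and $\vee1$ for open subspaces. Once the correct extension is identified, everything else reduces to four pointwise case analyses and a two-case verification of the quasi-uniformity condition.
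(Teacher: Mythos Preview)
Your proof is correct and follows essentially the same approach as the paper: both use the extension $\tilde V=V\cup\bigl((X\setminus U)\times X\bigr)$, intersect the witnessing set with $U$, and for item~(2) verify $\tilde V\in\qU_X$ via a sequence $(\tilde V_n)$ (the paper merely says ``by analogy with the proof of Proposition~\ref{p1.6}'' where you spell out the two-case check). Your explicit computation of $\tilde V^{-1}$ and $\tilde V\cap\tilde V^{-1}$ is a helpful addition, and note that the ``$q\ell^{\pm1}(X)$'' in item~(2) of the statement is evidently a typo for $q\ell^{\mp1}(X)$, which is what both you and the paper actually prove.
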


\begin{proof} Given any neighborhood assignment $V$ on $U$, observe that $\tilde V=V\cup \big((X\setminus U)\times X\big)$ is a neighborhood assignment on $X$ such that $B(x;\tilde V)=B(x;V)$ for every $x\in U$. By the definitions of $\ell^{\mp1}(X)$ and $\ell^{\vee1}(X)$, there are  subsets $A\subset X$ and $A_1\subset X$ of cardinality $|A|\le \ell^{\mp1}(X)$ and $|A_1|\le \ell^{\vee1}(X)$ such that $B(x;\tilde V)\cap A\ne\emptyset$ and $B(x;\tilde V\cap\tilde V^{-1})\cap A_1\ne\emptyset$ for every $x\in U$. Then $A\cap U$ and $A_1\cap U$ are subsets of cardinality $|A\cap U|\le|A|\le \ell^{\mp1}(X)$ and $|A_1\cap U|\le|A_1|\le \ell^{\vee 1}(X)$ such that $B(x;V)\cap (A\cap U)\ne\emptyset$ and $B(x;V\cap V^{-1})\cap (A_1\cap U)\ne \emptyset$ for all $x\in U$. This witnesses that $\ell^{\mp1}(U)\le \ell^{\mp1}(X)$ and $\ell^{\vee 1}(U)\le \ell^{\vee1}(X)$.

If $V\in\qU_U$, then by analogy with the proof of Proposition~\ref{p1.6}, we can show that the neighborhood assignment $\tilde V$ belongs to the universal quasi-uniformity $\qU_X$. In this case we can assume that the sets $A,A_1$ have cardinality $|A|\le q\ell^{\mp1}(X)$ and $|A_1|\le q\ell^{\vee1}(X)$, which implies $q\ell^{\mp1}(U)\le q\ell^{\mp1}(X)$ and $q\ell^{\vee1}(U)\le q\ell^{\vee1}(X)$.
\end{proof}

\begin{proposition}\label{p1.8} Let $X$ be a topological space. Then
\begin{enumerate}
\item $\ell^{\wedge1}(X)\le s(X)\le q\ell^{\vee1}(X)\le \ell^{\vee 1}(X)\le nw(X)$;
\item $de(X)\le q\ell^{\pm1}(X)\le \ell^{\pm1}(X)=l(X)$;
\item $c(X)\le q\ell^{\mp1}(X)\le \ell^{\mp1}(X)\le d(X)$;
\item If $X$ is quasi-regular, then $\bar\ell^{\pm 3}(X)=\bar l^{*1\kern-1pt\frac12}(X)=\ell^{\w}(X)=\dc(X)$;
\item If $X$ is completely regular, then $q\bar\ell^{\pm 3}(X)=q\ell^{\w}(X)=u\ell(X)=\dc(X)$.
\end{enumerate}
\end{proposition}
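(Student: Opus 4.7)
The plan is to handle the five items in order. For items~(1)--(3), the strategy is to combine Zorn-type maximality arguments modeled on the proof of Proposition~\ref{p1.1} with the Pervin construction $E_W=(W\times W)\cup((X\setminus W)\times X)$, which supplies concrete idempotent entourages inside $\qU_X$. For $\ell^{\wedge 1}(X)\le s(X)$, given a neighborhood assignment $U\in\pU_X$, I would pick a maximal $A\subset X$ with $(a,b),(b,a)\notin U$ for every pair of distinct $a,b\in A$; then $A$ is discrete (witnessed by the open sets $B(a;U)$) and $X=B(A;U\cup U^{-1})$ by maximality. For $s(X)\le q\ell^{\vee 1}(X)$, I would use a discrete $D\subset X$ and open witnesses $V_d\ni d$ satisfying $V_d\cap D=\{d\}$, then form $U=\bigcap_{d\in D}E_{V_d}$; the computation $UU=U$ together with the identity $B(d;U\cap U^{-1})=V_d\setminus\bigcup_{d'\ne d}V_{d'}$ (pairwise disjoint in $d$) delivers the required lower bound. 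The inequality $\ell^{\vee 1}(X)\le nw(X)$ follows from a network-valued selection: for each $x$ pick $N_x\in\N$ with $x\in N_x\subset B(x;U)$ and a representative $y_N$ of each $N$ in the resulting subfamily; then $(y_{N_x},x)\in U\cap U^{-1}$. Items~(2) and~(3) are parallel: $\ell^{\pm 1}(X)=l(X)$ via the standard correspondence between open covers and neighborhood assignments; $de(X)\le q\ell^{\pm 1}(X)$ by the argument of Proposition~\ref{p1.1} executed inside $\qU_X$; $\ell^{\mp 1}(X)\le d(X)$ trivially; and $c(X)\le q\ell^{\mp 1}(X)$ uses $U=\bigcap_{W\in\W}E_W$ across a disjoint open family~$\W$, where disjointness ensures $B(x;U)\in\{W,X\}$ and hence $U\in\qU_X$.

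For part~(4), Proposition~\ref{p1.5} identifies $\bar\ell^{\pm 3}(X)$ with $\bar l^{*1\frac12}(X)$, which Proposition~\ref{p1.2}(1) in turn identifies with $\dc(X)=l^{*\w}(X)$ under quasi-regularity. The remaining equality $\ell^{\w}(X)=\dc(X)$ splits into two halves. The direction $\ell^{\w}\le\dc$ follows from $\ell^{\w}\le\ell^{\mp 2n}=l^{*n}$ for each $n$ and $\min_n l^{*n}=l^{*\w}=\dc$. For the reverse, observe that every binary word $v$ of length $k$ is a subword of some alternating word $\pm m$ with $m\le 2k-1$ (insert the letters needed to restore alternation), so $\ell^v\ge\ell^{\pm m}$ by Lemma~\ref{l2.1}. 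Combining the proof of Proposition~\ref{p1.2}(1), which yields $\dc\le l^{*n+\frac12}=\ell^{\pm(2n+1)}$ for $n\ge 1$, with $\dc\le l=\ell^{\pm 1}$ and $\dc\le c\le\ell^{\mp 1}$ from part~(3) and the trivial $\dc\le c$, one gets $\dc\le\ell^v$ for every binary word $v$, hence $\dc\le\ell^{\w}$.

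For part~(5), complete regularity implies quasi-regularity, so part~(4) gives $\bar\ell^{\pm 3}(X)=\dc(X)$. Since $\U_X\subset\qU_X^v$ for every nonempty $v$ (inside the uniformity containing a given entourage $W$ one iterates axiom~(U3) to obtain $U\in\U_X$ with $U^v\subset W$), we get $u\ell(X)\le q\ell^v(X)$ for each $v$ and hence $u\ell(X)\le q\ell^{\w}(X)$. Moreover, if $B(A;U^{\pm 3})$ is dense in $X$ for some $U\in\qU_X$ and $A\subset X$, then for any $y\in X$ the ball $B(y;U)$---a neighborhood of $y$---meets $B(A;U^{\pm 3})$, which one checks implies $y\in B(A;U^{\pm 4})$; so $q\ell^{\pm 4}\le q\bar\ell^{\pm 3}$. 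Combined with $\qU_X\subset\pU_X$ and $\bar\ell\le\ell$, this yields the chain $u\ell\le q\ell^{\w}\le q\ell^{\pm 4}\le q\bar\ell^{\pm 3}\le\bar\ell^{\pm 3}=\dc$. To close it I would prove $\dc(X)\le u\ell(X)$ directly: for a discrete family $\W=\{W_\alpha\}$ of non-empty open sets, pick $x_\alpha\in W_\alpha$ and use complete regularity to fix continuous $f_\alpha:X\to[0,1]$ with $f_\alpha(x_\alpha)=1$ and $f_\alpha\vert(X\setminus W_\alpha)=0$. Define $\rho(x,y)=\sup_\alpha|f_\alpha(x)-f_\alpha(y)|$ and $V_\e=\{(x,y):\rho(x,y)<\e\}$. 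By discreteness of $\W$, each $x$ has an open neighborhood $O_x$ meeting at most one $W_\alpha$, so on $O_x$ all but one $f_\alpha$ vanish identically and $\rho$ is continuous at $x$; this makes $B(x;V_\e)$ open, and together with $V_{\e/2}^{\,2}\subset V_\e$ shows that $\{V_\e\}$ is the base of a uniformity inside $\U_X$. For $\e<\tfrac12$, $\rho(a,x_\alpha)<\e$ forces $f_\alpha(a)>\tfrac12$ and hence $a\in W_\alpha$; pairwise disjointness of the $W_\alpha$ then forces any $A$ with $X=B(A;V_\e)$ to satisfy $|A|\ge|\W|$, giving $\dc(X)\le u\ell(X)$.

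The principal technical obstacle is in part~(1), in verifying that $U=\bigcap_{d\in D}E_{V_d}$ actually belongs to $\qU_X$. Since $UU=U$ automatically, this reduces to checking that each $B(x;U)=\bigcap_{d:\,x\in V_d}V_d$ is a neighborhood of $x$, which can fail when $x\notin D$ lies in infinitely many $V_d$. The remedy is to shrink the witnesses: in the $T_1$ setting, replacing $V_d$ by $V_d\setminus\overline{D\setminus\{d\}}$ makes every $x\notin\overline{D}$ lie in only finitely many $V_d$; in full generality one further refines $U$ by intersecting with Pervin entourages indexed by a cofinal family of finite subsets of $D$, while preserving the pairwise disjointness of $B(d;U\cap U^{-1})$ for $d\in D$ needed for the separation argument. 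An analogous but tamer issue in~(5) is absorbed by the same discreteness of $\W$, which collapses the supremum defining $\rho$ to a single continuous function near each point of $X$.
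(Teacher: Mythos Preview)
Your treatment of items (4) and (5) is essentially correct and parallels the paper's own argument (you make the step $q\ell^{\w}\le q\bar\ell^{\pm 3}$ explicit via $q\ell^{\pm 4}\le q\bar\ell^{\pm 3}$, which is exactly the general inequality $q\ell^{v-}\le q\bar\ell^{v}$ the paper records just before Proposition~\ref{p1.5}; your pseudometric for $\dc\le u\ell$ uses $\sup$ where the paper uses $\sum$, but either works).

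The genuine problem is in item~(1), at $s(X)\le q\ell^{\vee 1}(X)$. You correctly identify that the infinite intersection $U=\bigcap_{d\in D}E_{V_d}$ need not lie in $\pU_X$, but neither of your proposed repairs works. Replacing $V_d$ by $V_d\setminus\overline{D\setminus\{d\}}$ does \emph{not} force a point $x\notin\overline{D}$ to lie in only finitely many of the shrunk sets: take $X=\mathbb R^2$, $D=\{(n,0):n\in\mathbb N\}$, and strips $V_n$ all passing through a fixed $p=(0,1)$; the shrinking removes nothing near $p$. Your ``full generality'' sentence about intersecting with Pervin entourages indexed by finite subsets of $D$ is too vague to evaluate, and any such refinement must simultaneously keep $U$ in $\pU_X$, keep $UU=U$, and keep the sets $B(d;U\cap U^{-1})$ pairwise disjoint --- it is not clear how to do all three at once directly on $X$. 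A related issue infects item~(2): the phrase ``the argument of Proposition~\ref{p1.1}'' points to the proof of $l^{*1}\le de$, which goes in the \emph{opposite} direction; you have not actually produced an entourage in $\qU_X$ witnessing $de(X)\le q\ell^{\pm 1}(X)$.

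The paper sidesteps all of this by first proving two hereditary lemmas (Propositions~\ref{p1.6} and~\ref{p1.7}): $q\ell^{\pm 1}$ and $q\ell^{\vee 1}$ pass to closed subspaces, while $q\ell^{\mp 1}$ and $q\ell^{\vee 1}$ pass to open subspaces. For $s\le q\ell^{\vee 1}$ one then works not in $X$ but in $\overline{D}$, where the sets $U_x=O_x\cap\overline{D}$ are automatically pairwise disjoint (each lies in the closure $\bar x$ of the singleton), so their union is an open subspace of $\overline{D}$ on which the obvious clopen partition gives an honest idempotent entourage; two applications of inheritance finish. The same trick handles $de\le q\ell^{\pm 1}$ (pass to the closed set $\bigcup\overline{\F}$, which is a disjoint union of clopen pieces) and $c\le q\ell^{\mp 1}$ (pass to the open set $\bigcup\U$). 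In short, the missing idea in your proposal is to change the ambient space before building the quasi-uniform entourage, rather than trying to force an infinite Pervin intersection to behave on all of $X$.
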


\begin{proof} 1. First we prove that $\ell^{\wedge 1}(X)\le s(X)$. Given any neighborhood assignment $V$ on $X$ we need to find a subset $A\subset X$ of cardinality $|A|\le s(X)$ such that $X=B(A;V\cup V^{-1})$. Using Zorn's Lemma, choose a maximal subset $A\subset X$ such that $y\notin B(x;V\cup V^{-1})$ for any distinct points $x,y\in X$. Taking into account that for every $x\in A$ the set $B(x;V\cup V^{-1})$ is a neighborhood of $x$, we conclude that the space $A$ is discrete and hence has cardinality $|A|\le s(X)$.

To see that $s(X)\le q\ell^{\vee1}(X)$, take any discrete subspace $D\subset X$. For every point $x\in D$ choose an open set $O_x\subset X$ such that $O_x\cap D=\{x\}$ and put $U_x=O_x\cap\overline{D}$.  It follows that $U_x\subset\bar x$ where $\overline{x}$ is the closure of the singleton $\{x\}$ in $X$.
Observe that for any distinct points $x,y\in D$ we get $U_x\cap U_y\subset O_x\cap O_y\cap\bar x\cap \bar y=\emptyset$. So, $(U_x)_{x\in D}$ is a disjoint family of open sets in $\overline{D}$ and $U=\bigcup_{x\in D}U_x$ is an open dense set in $\overline{D}$. By Propositions~\ref{p1.6} and \ref{p1.7} we conclude that $|D|\le q\ell^{\vee1}(U)\le q\ell^{\vee1}(\overline{D})\le q\ell^{\vee1}(X)$, which implies $s(X)\le q\ell^{\vee 1}(X)$.

The inequality $q\ell^{\vee1}(X)\le \ell^{\vee1}(X)$ is trivial. To see that $\ell^{\vee1}(X)\le nw(X)$,  fix a network $\mathcal N$ of the topology of $X$ of cardinality $|\mathcal N|=nw(X)$. Given a neighborhood assignment $V$ on $X$, for every $x\in X$ find a set $N_x\in\mathcal N$ such that $x\in N_x\subset B(x;V)$. Since $|\{N_x:x\in X\}|\le|\mathcal N|=nw(X)$, we can choose a subset $A\subset X$ of cardinality $|A|\le nw(X)$ such that $\{N_x:x\in A\}=\{N_x:x\in X\}$. We claim that $X=B(A;V\cap V^{-1})$. Indeed, for every point $x\in X$ we can find a point $a\in A$ such that $N_a=N_x$. Then $x\in N_x=N_a\subset B(a;V)$ and $a\in N_a=N_x\subset B(x;V)$, which implies $x\in B(a;V\cap V^{-1})$. So, $\ell^{\vee1}(X)\le nw(X)$.
\smallskip

2. The equality $\ell^{\pm1}(X)=l(X)$ follows from Proposition~\ref{p1.5} and $q\ell^{\pm1}(X)\le \ell^{\pm1}(X)$ is trivial. To see that $de(X)\le q\ell^{\pm1}(X)$, take any discrete family $\mathcal D$ of subsets in $X$. Replacing each set $D\in\mathcal D$ by its closure $\overline{D}$, we can assume that $\mathcal D$ consists of closed sets. Then its union $\bigcup\mathcal D$ is a closed set in $X$ and  Proposition~\ref{p1.6} guarantees that $|\mathcal D|\le q\ell^{\pm1}(\cup\mathcal D)\le q\ell^{\pm1}(X)$. So, $de(X)\le q\ell^{\pm1}(X)$.
\smallskip

3. The inequality $\ell^{\mp1}(X)\le d(X)$ trivially follows from the definitions of the cardinal invariants $\ell^{\mp1}$ and $d$. To see that $c(X)\le q\ell^{\mp1}(X)$, take any disjoint family $\U$ of non-empty open sets in $X$ and applying Proposition~\ref{p1.7}, conclude that $|\U|\le q\ell^{\mp1}(\bigcup\U)\le q\ell^{\mp1}(X)$, which implies that $c(X)\le q\ell^{\mp1}(X)$.
\smallskip

4. If $X$ is quasi-regular, then the equality $\dc(X)=l^{*\w}(X)=\bar l^{*1\kern-1pt\frac12}(X)=\bar\ell^{\pm 3}(X)=\ell^{\w}(X)$ follows from Propositions~\ref{p1.2} and \ref{p1.5}.
\smallskip

5. Next, assume that $X$ is completely regular. Then we get the inequality
$u\ell(X)\le q\ell^{\w}(X)\le q\bar\ell^{\pm3}(X)\le\bar\ell^{\pm3}(X)=\dc(X)$. These inequalities will turn into equalities if we prove that $\dc(X)\le u\ell(X)$. Assuming that $u\ell(X)<\dc(X)$, for the cardinal $\kappa=u\ell(X)$ we can find a discrete family $(U_\alpha)_{\alpha\in\kappa^+}$ of non-empty open sets on $X$. In each set $U_\alpha$ fix a point $x_\alpha$. Since $X$ is completely regular, for every $\alpha\in\kappa^+$   we can choose a continuous function $f_\alpha:X\to [0,1]$ such that $f_\alpha(x_\alpha)=1$ and $f_\alpha^{-1}\big([0,1)\big)\subset U_\alpha$. The functions $(f_\alpha)_{\alpha\in\kappa^+}$ determine a continuous pseudometric $d:X\times X\to[0,1]$ defined by the formula $d(x,y)=\sum_{\alpha\in \kappa^+}|f_\alpha(x)-f_\alpha(y)|$. The pseudometric $d$ determines a uniformity generated by the base consisting of entourages $[d]_{<\e}=\{(x,y)\in X\times X:d(x,y)<\e\}$, which belong to the universal uniformity $\U_X$ on $X$. Observe that for any subset $A\subset X$ of cardinality $|A|\le\kappa$
there is an index $\alpha\in\kappa^+$ such that $U_\alpha\cap A=\emptyset$. Then for the entourage $U=[d]_{<1}\in\U_X$ we get $x_\alpha\notin B(A;U)$, which implies that $u\ell(X)=\ell(\U_X)>\kappa=u\ell(X)$ and this is a desired contradiction.
\end{proof}

Taking into account Propositions~\ref{p1.2}, \ref{p1.5} and \ref{p1.8}, we see that for a quasi-regular space $X$ the cardinal characteristics $\ell^{\pm n}$, $\ell^{\mp n}$, $\bar \ell^{\pm n}$, $\bar\ell^{\mp n}$, $\ell^{\wedge n}$ and $\ell^{\vee n}$ relate with other cardinal characteristics of topological spaces as follows.

\begin{picture}(400,150)(-35,-10)
\put(0,0){$\bar l^{*\w}$}
\put(3,12){\line(0,1){45}}
\put(5,12){\line(0,1){45}}
\put(16,2){\line(1,0){38}}
\put(16,4){\line(1,0){38}}
\put(60,0){$\bar\ell^{\pm3}$}
\put(77,2){\line(1,0){10}}
\put(77,4){\line(1,0){10}}
\put(63,12){\line(0,1){15}}
\put(65,12){\line(0,1){15}}
\put(90,0){$\bar l^{*\kern-1pt 1\kern-2pt \frac12}$}
\put(108,3){\vector(1,0){37}}
\put(150,0){$\bar\ell^{\mp 2}\,=\,\bar l^{*1}$}
\put(197,3){\vector(1,0){38}}
\put(240,0){$\bar\ell^{\pm 1}\,=\,\bar l^{*\kern-1pt \frac12}\,=\,\bar l$}
\put(255,10){\vector(2,1){40}}
\put(308,3){\vector(1,0){17}}
\put(330,0){$\bar l^{*0}$}
\put(345,2){\line(1,0){11}}
\put(345,4){\line(1,0){11}}
\put(360,0){$d$}
\put(368,3){\vector(1,0){17}}
\put(390,0){$hd$}
\put(395,11){\vector(0,1){45}}

\put(0,120){$l^{*\w}$}
\put(15,123){\vector(1,0){40}}
\put(60,120){$l^{*\kern-1pt 2}$}
\put(75,123){\vector(1,0){70}}
\put(150,120){$l^{*\kern-1pt 1\kern-1.5pt \frac12}$}
\put(168,123){\vector(1,0){67}}
\put(240,120){$l^{*\kern-1pt 1}$}
\put(250,118){\vector(2,-1){45}}
\put(253,123){\vector(1,0){72}}
\put(330,120){$l^{*\kern-1pt \frac12}$}
\put(345,122){\line(1,0){12}}
\put(345,124){\line(1,0){12}}
\put(362,120){$l$}
\put(370,123){\vector(1,0){16}}
\put(390,120){$hl$}
\put(395,115){\vector(0,-1){45}}

\put(60,30){$\ell^{\pm4}$}
\put(73,42){\vector(1,1){15}}
\put(150,30){$\ell^{\mp 3}$}
\put(163,42){\vector(1,1){15}}
\put(154,27){\vector(0,-1){15}}
\put(240,30){$\ell^{\pm2}$}
\put(253,42){\vector(1,1){15}}
\put(244,27){\vector(0,-1){15}}
\put(300,30){$c$}
\put(308,37){\vector(1,1){20}}
\put(309,33){\vector(1,0){16}}
\put(330,30){$\ell^{\mp1}$}
\put(343,42){\vector(1,1){15}}
\put(334,27){\vector(0,-1){15}}

\put(60,90){$\ell^{\mp4}$}
\put(71,88){\line(1,-1){18}}
\put(70,86){\line(1,-1){18}}
\put(63,101){\line(0,1){16}}
\put(65,101){\line(0,1){16}}
\put(150,90){$\ell^{\pm 3}$}
\put(161,88){\vector(1,-1){17}}
\put(153,101){\line(0,1){16}}
\put(155,101){\line(0,1){16}}
\put(240,90){$\ell^{\mp2}$}
\put(251,88){\vector(1,-1){17}}
\put(243,101){\line(0,1){16}}
\put(245,101){\line(0,1){16}}
\put(300,90){$de$}
\put(309,87){\vector(1,-1){19}}
\put(312,93){\vector(1,0){15}}
\put(330,90){$\ell^{\pm1}$}
\put(341,88){\vector(1,-1){17}}
\put(333,101){\line(0,1){16}}
\put(335,101){\line(0,1){16}}

\put(-27,60){$dc$}
\put(-14,62){\line(1,0){10}}
\put(-14,64){\line(1,0){10}}
\put(0,60){$\ell^{\w}$}
\put(3,71){\line(0,1){46}}
\put(5,71){\line(0,1){46}}
\put(12,62){\line(1,0){12}}
\put(12,64){\line(1,0){12}}
\put(30,60){$\ell^{{\wedge}4}$}
\put(42,56){\line(1,-1){16}}
\put(41,54){\line(1,-1){16}}
\put(41,71){\vector(1,1){16}}
\put(90,60){$\ell^{\vee4}$}
\put(105,63){\vector(1,0){12}}
\put(120,60){$\ell^{{\wedge}3}$}
\put(132,71){\vector(1,1){16}}
\put(132,58){\vector(1,-1){17}}
\put(180,60){$\ell^{\vee3}$}
\put(195,63){\vector(1,0){12}}
\put(210,60){$\ell^{{\wedge}2}$}
\put(221,71){\vector(1,1){16}}
\put(221,58){\vector(1,-1){17}}
\put(270,60){$\ell^{\vee2}$}
\put(285,63){\vector(1,0){12}}
\put(300,60){$\ell^{{\wedge}1}$}
\put(311,71){\vector(1,1){16}}
\put(311,58){\vector(1,-1){17}}
\put(315,63){\vector(1,0){13}}
\put(332,60){$s$}
\put(340,68){\vector(1,1){48}}
\put(340,57){\vector(1,-1){48}}
\put(342,63){\vector(1,0){15}}
\put(360,60){$\ell^{\vee1}$}
\put(375,63){\vector(1,0){12}}
\put(390,60){$nw$}
\end{picture}

For Tychonoff spaces we can add to this diagram the cardinal characteristics $q\ell^{\pm n}$, $q\ell^{\mp n}$, $q\ell^{\vee n}$, and $u\ell$ and obtain a  more complex diagram.

{\small
\begin{picture}(400,180)(5,-2)
\put(1,0){$\bar l^{*\w}$}
\put(3,12){\line(0,1){15}}
\put(5,12){\line(0,1){15}}
\put(15,2){\line(1,0){45}}
\put(15,4){\line(1,0){45}}
\put(63,0){$\bar\ell^{\pm3}$}
\put(80,2){\line(1,0){15}}
\put(80,4){\line(1,0){15}}
\put(65,12){\line(0,1){15}}
\put(67,12){\line(0,1){15}}
\put(100,0){$\bar l^{*\kern-1pt 1\kern-1pt \frac12}$}
\put(115,3){\vector(1,0){45}}
\put(164,0){$\bar\ell^{\mp 2}$}
\put(180,2){\line(1,0){16}}
\put(180,4){\line(1,0){16}}
\put(200,0){$\bar l^{*\kern-1pt 1}$}
\put(212,3){\vector(1,0){48}}
\put(264,0){$\bar\ell^{\pm 1}$}
\put(280,2){\line(1,0){16}}
\put(280,4){\line(1,0){16}}
\put(300,0){$\bar l^{*\kern-1pt \frac12}$}
\put(314,2){\line(1,0){12}}
\put(314,4){\line(1,0){12}}
\put(330,0){$\bar l$}
\put(337,3){\vector(1,0){38}}
\put(279,7){\vector(2,1){46}}

\put(379,0){$\bar l^{*0}$}
\put(392,2){\line(1,0){22}}
\put(392,4){\line(1,0){22}}
\put(418,0){$d$}
\put(426,3){\vector(1,0){25}}
\put(455,0){$hd$}
\put(460,11){\vector(0,1){65}}

\put(0,30){$dc$}
\put(3,40){\line(0,1){37}}
\put(5,40){\line(0,1){37}}
\put(63,30){$\ell^{\pm4}$}
\put(73,41){\vector(3,4){27}}
\put(163,30){$\ell^{\mp 3}$}
\put(173,41){\vector(3,4){27}}
\put(166,27){\vector(0,-1){16}}
\put(263,30){$\ell^{\pm2}$}
\put(273,41){\vector(3,4){27}}

\put(266,27){\vector(0,-1){16}}
\put(329,31){$c$}
\put(336,37){\vector(1,1){41}}
\put(337,33){\vector(1,0){38}}
\put(378,30){$\ell^{\mp1}$}
\put(387,41){\vector(1,1){35}}
\put(380,27){\vector(0,-1){15}}

\put(62,55){$q\ell^{\pm4}$}
\put(70,64){\vector(3,4){9}}
\put(65,51){\line(0,-1){11}}
\put(67,51){\line(0,-1){11}}
\put(162,55){$q\ell^{\mp 3}$}
\put(170,64){\vector(3,4){9}}
\put(166,51){\vector(0,-1){11}}
\put(263,55){$q\ell^{\pm2}$}
\put(270,64){\vector(3,4){9}}
\put(266,51){\vector(0,-1){11}}
\put(375,55){$q\ell^{\mp1}$}
\put(387,65){\vector(1,1){10}}
\put(380,51){\vector(0,-1){11}}

\put(2,80){$\ell^{\w}$}
\put(3,90){\line(0,1){36}}
\put(5,90){\line(0,1){36}}
\put(12,82){\line(1,0){10}}
\put(12,84){\line(1,0){10}}
\put(25,80){$\ell^{\wedge\kern-1pt 4}$}
\put(32,90){\vector(3,4){28}}
\put(32,79){\line(3,-4){29}}
\put(31,77){\line(3,-4){29}}
\put(50,80){$q\ell^{\wedge\kern-1pt 4}$}
\put(53,88){\vector(3,4){10}}
\put(54,76){\line(3,-4){10}}
\put(52,75){\line(3,-4){10}}
\put(47,83){\line(-1,0){9}}
\put(47,81){\line(-1,0){9}}
\put(75,80){$q\ell^{\vee\kern-1pt 4}$}
\qbezier(88,92)(115,120)(139,97)
\put(139,97){\vector(1,-1){10}}
\put(90,82){\vector(1,0){8}}
\put(100,80){$\ell^{\vee\kern-1pt 4}$}
\put(112,82){\vector(1,0){10}}
\put(125,80){$\ell^{\wedge\kern-1pt 3}$}
\put(132,90){\vector(3,4){28}}
\put(132,78){\vector(3,-4){29}}
\put(150,80){$q\ell^{\wedge\kern-1pt 3}$}
\put(153,88){\vector(3,4){10}}
\put(154,76){\vector(3,-4){10}}
\put(148,82){\vector(-1,0){10}}
\put(175,80){$q\ell^{\vee\kern-1pt 3}$}
\put(190,82){\vector(1,0){8}}
\put(200,80){$\ell^{\vee\kern-1pt 3}$}
\put(212,82){\vector(1,0){10}}
\qbezier(188,78)(213,48)(238,67)
\put(238,67){\vector(1,1){10}}

\put(225,80){$\ell^{\wedge\kern-1pt 2}$}
\put(232,90){\vector(3,4){28}}
\put(232,78){\vector(3,-4){29}}
\put(250,80){$q\ell^{\wedge\kern-1pt 2}$}
\put(253,88){\vector(3,4){10}}
\put(254,76){\vector(3,-4){10}}
\put(248,82){\vector(-1,0){10}}
\put(275,80){$q\ell^{\vee\kern-1pt 2}$}
\qbezier(288,92)(315,120)(339,97)
\put(339,97){\vector(1,-1){10}}
\put(290,82){\vector(1,0){8}}
\put(300,80){$\ell^{\vee\kern-1pt 2}$}
\put(312,82){\vector(1,0){10}}
\put(325,80){$\ell^{\wedge\kern-1pt 1}$}
\put(338,90){\vector(1,1){37}}
\put(338,76){\vector(1,-1){37}}
\put(350,80){$q\ell^{\wedge\kern-1pt 1}$}
\put(360,90){\vector(1,1){13}}
\put(359,77){\vector(1,-1){15}}
\put(348,82){\vector(-1,0){10}}
\put(365,82){\vector(1,0){10}}
\put(378,80){$s$}
\put(384,85){\vector(1,1){70}}
\put(384,79){\vector(1,-1){70}}
\put(385,82){\vector(1,0){8}}
\put(395,80){$q\ell^{\vee\kern-1pt 1}$}
\put(410,82){\vector(1,0){8}}
\put(420,80){$\ell^{\vee\kern-1pt 1}$}
\put(432,82){\vector(1,0){20}}
\put(455,80){$nw$}

\put(62,105){$q\ell^{\mp4}$}
\put(70,103){\line(3,-4){11}}
\put(69,101){\line(3,-4){10}}
\put(66,114){\vector(0,1){13}}
\put(162,105){$q\ell^{\pm 3}$}
\put(170,102){\vector(3,-4){9}}
\put(166,114){\vector(0,1){13}}
\put(263,105){$q\ell^{\mp2}$}
\put(270,102){\vector(3,-4){9}}
\put(266,114){\vector(0,1){13}}
\put(375,105){$q\ell^{\pm1}$}
\put(384,102){\vector(1,-1){13}}
\put(380,114){\vector(0,1){13}}

\put(0,130){$u\ell$}
\put(3,138){\line(0,1){19}}
\put(5,138){\line(0,1){19}}
\put(3,40){\line(0,1){37}}
\put(5,40){\line(0,1){37}}
\put(63,130){$\ell^{\mp4}$}
\put(65,140){\line(0,1){17}}
\put(67,140){\line(0,1){17}}
\put(71,129){\line(3,-4){30}}
\put(70,127){\line(3,-4){29}}
\put(163,130){$\ell^{\pm 3}$}
\put(165,140){\line(0,1){17}}
\put(167,140){\line(0,1){17}}

\put(171,128){\vector(3,-4){29}}
\put(263,130){$\ell^{\mp2}$}
\put(265,140){\line(0,1){17}}
\put(267,140){\line(0,1){17}}

\put(271,128){\vector(3,-4){29}}

\put(329,129){$de$}
\put(337,126){\vector(1,-1){40}}
\put(340,132){\vector(1,0){35}}
\put(378,130){$\ell^{\pm1}$}
\put(379,140){\line(0,1){17}}
\put(381,140){\line(0,1){17}}

\put(384,127){\vector(1,-1){36}}

\put(1,160){$l^{*\w}$}
\put(15,163){\vector(1,0){45}}
\put(64,160){$l^{*\kern-1pt 2}$}
\put(76,163){\vector(1,0){84}}
\put(164,160){$l^{*\kern-1pt 1\kern-1pt \frac12}$}
\put(182,163){\vector(1,0){78}}
\put(264,160){$l^{*\kern-1pt 1}$}
\put(275,158){\vector(2,-1){50}}
\put(277,163){\vector(1,0){97}}
\put(378,160){$l^{*\kern-1pt \frac12}$}
\put(392,162){\line(1,0){22}}
\put(392,164){\line(1,0){22}}
\put(418,160){$l$}
\put(426,163){\vector(1,0){25}}
\put(455,160){$hl$}
\put(460,156){\vector(0,-1){67}}
\end{picture}
}

\begin{problem}\label{pr1} Study the cardinal invariants composing the above diagrams. Are there any additional ZFC-(in)equalities between these cardinal invariants?
\end{problem}

\begin{problem}
Construct examples distinguishing the pairs of the cardinal characteristics $\ell^{\pm n}$ and $q\ell^{\pm n}$, $\ell^{\mp n}$ and $q\ell^{\mp n}$, $\ell^{\wedge n}$ and $q\ell^{\wedge n}$, $\ell^{\vee n}$ and $q\ell^{\vee n}$.
\end{problem}

We do not know the answer even to

\begin{problem} Is $q\ell^{\pm1}=\ell^{\pm1}$?
\end{problem}

\begin{remark} The cardinal characteristics $\ell^{\pm2}$, $q\ell^{\mp n}$, $q\ell^{\mp n}$, and $q\ell^{\vee n}$ are used in the paper \cite{BR} for evaluation of the submetrizability number and $i$-weight of paratopological groups and topological monoids.
\end{remark}

\section{Foredensity of a topological space}\label{s4}

In this section we give a partial answer to Problem~\ref{pr1} studying the cardinal invariant $\elm=\ell^{\mp1}$ called the {\em foredensity}. Observe that for a topological space $X$ its {\em foredensity} $\elm(X)$ is equal to the smallest cardinal $\kappa$ such that for any neighborhood assignment $V=\bigcup_{x\in X}\{x\}\times O_x$ on $X$ there is a subset $A\subset X$ of cardinality $|A|\le\kappa$ which meets every neighborhood $O_x=B(x;V)$, $x\in X$.

By Proposition~\ref{p1.8}(3), $\elm(X)\le d(X)$. In some cases this inequality turns into the equality.

\begin{theorem}\label{t4.1} For any topological space $X$ we get $\elm(X)\le d(X)\le\elm(X)\cdot\chi(X)$.\newline If $|X|<\aleph_\w$, then $\elm(X)=d(X)$.
\end{theorem}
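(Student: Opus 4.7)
The plan is to prove the three claims in order. The first, $\elm(X)\le d(X)$, is immediate from Proposition~\ref{p1.8}(3). For $d(X)\le\elm(X)\cdot\chi(X)$, fix at each $x\in X$ a neighborhood base $\{B_{x,\alpha}:\alpha<\chi(X)\}$; for each $\alpha<\chi(X)$ the prescription $V_\alpha\colon x\mapsto B_{x,\alpha}$ is a neighborhood assignment, so by definition of $\elm(X)$ there is $A_\alpha\subset X$ with $|A_\alpha|\le\elm(X)$ meeting every $B_{x,\alpha}$. Then $A=\bigcup_{\alpha<\chi(X)}A_\alpha$ has cardinality $\le\elm(X)\cdot\chi(X)$ and is dense: any open neighborhood $U$ of a point $x$ contains some $B_{x,\alpha}$, and $A_\alpha\cap B_{x,\alpha}\subset A\cap U$ is non-empty.

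For the main claim, I would argue by contradiction: suppose $\kappa:=\elm(X)<\delta:=d(X)$. Since $\delta\le|X|<\aleph_\w$, the cardinal $\delta$ equals $\aleph_n$ for some $n<\w$ and is therefore regular. The aim is to exhibit a neighborhood assignment $V\in\pU_X$ for which no $A\in[X]^{\le\kappa}$ meets every $B(x;V)$, contradicting $\elm(X)\le\kappa$. By transfinite recursion on $\alpha<\delta$, I would choose $x_\alpha\in X\setminus\overline{\{x_\beta:\beta<\alpha\}}$ (possible since $|\{x_\beta:\beta<\alpha\}|<\delta=d(X)$ keeps this set non-dense) together with an open neighborhood $O_\alpha$ of $x_\alpha$ disjoint from $\{x_\beta:\beta<\alpha\}$.

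In the key case $|X|=\delta$ this settles the matter cleanly: the sequence $(x_\alpha)_{\alpha<\delta}$ consists of $\delta=|X|$ distinct points and hence bijectively enumerates $X$, so setting $B(x_\alpha;V)=O_\alpha$ defines $V$ on all of $X$; for any $A\in[X]^{\le\kappa}$ the indices $\{\alpha<\delta:x_\alpha\in A\}$ form a set of cardinality $\le\kappa<\delta$ whose supremum is $<\delta$ by regularity, so for any $\alpha^*<\delta$ above this supremum we have $A\subset\{x_\beta:\beta<\alpha^*\}$ and therefore $O_{\alpha^*}\cap A=\emptyset$. To reduce the general case $d(X)<|X|$ to this, I would pick a dense subspace $D\subset X$ with $|D|=\delta$ (so $d(D)=|D|=\delta$ from $d(X)\le d(D)\le|D|$), apply the above case to $D$ to obtain a bad neighborhood assignment $V_D$ on $D$, and extend it to $V_X$ on $X$ by setting $B(y;V_X)=U_y$ for $y\in D$ (with $U_y$ open in $X$ satisfying $U_y\cap D=B(y;V_D)$) and choosing $B(z;V_X)$ for $z\in X\setminus D$ so that any transversal of $V_X$ restricts essentially to one of $V_D$.

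The main obstacle is this lifting step: given $A\in[X]^{\le\kappa}$ meeting every $B(x;V_X)$, one would like $A\cap D$ to meet every $B(y;V_D)$, contradicting the badness of $V_D$; but points of $A\setminus D$ may allow $A$ to meet some $U_y$ only through $X\setminus D$ while $A\cap D$ misses $B(y;V_D)$. I expect the right strategy is to choose the extensions $U_y$ as small as possible and to pick $B(z;V_X)$ to coincide with some $U_{\alpha(z)}$ containing $z$ (which exists since $\bigcup_\alpha U_\alpha$ is an open set containing the dense set $D$, hence equals $X$), so that $V_X$-meeting is equivalent to meeting every $U_\alpha$ for $\alpha<\delta$; then a pigeonhole argument on the at most $\kappa$ points of $A\setminus D$, combined with the regularity of $\delta$, should allow one to patch $A\cap D$ by at most $\kappa$ extra points of $D$ into a transversal of $V_D$. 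The essential use of regularity here is consistent with the paper's subsequent construction of counterexamples for singular cardinals.
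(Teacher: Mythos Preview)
Your proofs of the two inequalities $\elm(X)\le d(X)\le\elm(X)\cdot\chi(X)$ are correct and essentially identical to the paper's. Your argument for the main implication is also correct in the special case $|X|=d(X)$: the left-separated enumeration $(x_\alpha)_{\alpha<\delta}$ then exhausts $X$, and regularity of $\delta<\aleph_\w$ finishes the job cleanly.

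The gap is in the reduction from general $X$ to this special case. Two concrete problems: first, the assertion that ``$\bigcup_\alpha U_\alpha$ is an open set containing the dense set $D$, hence equals $X$'' is false --- an open set containing a dense subset is merely dense, not all of $X$. Second, and more seriously, your patching strategy does not work: given $A\in[X]^{\le\kappa}$ meeting every $U_\alpha$, a single point $a\in A\setminus D$ may lie in $\delta$-many of the sets $U_\alpha$, so there is no reason one can replace $A\setminus D$ by $\le\kappa$ points of $D$ and still meet every $U_\alpha\cap D$. Nothing in the construction of the $U_\alpha$'s controls their behaviour on $X\setminus D$, and nothing forces a point of $X\setminus D$ to sit in only boundedly many $U_\alpha$'s.

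The paper avoids this lifting issue altogether by a different route. Instead of passing to a dense subspace of size $d(X)$, it first passes to a nonempty open subspace $V\subset X$ in which \emph{every} subset of size $\le\mu:=\elm(X)$ is nowhere dense (such a $V$ exists by a cellularity argument), and then takes $V$ of minimal cardinality $\kappa=|V|$. The crucial step is to enumerate not the points of $V$ but a cofinal family $\{C_\alpha\}_{\alpha<\kappa}$ in the poset $([V]^{\le\mu},\subset)$, using the fact that $\cf([\kappa]^{\le\mu})=\kappa$ for $\mu<\kappa<\aleph_\w$. One then chooses $x_\alpha\in V\setminus\overline{C_\alpha}$ (possible because $C_\alpha$ is nowhere dense in $V$) and sets $B(x_\alpha;U)=V\setminus\overline{C_\alpha}$. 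Now \emph{any} $A\in[X]^{\le\mu}$ satisfies $A\cap V\subset C_\alpha$ for some $\alpha$, whence $B(x_\alpha;U)\cap A=\emptyset$ --- the containment $B(x_\alpha;U)\subset V$ automatically handles $A\setminus V$. This is precisely the device your left-separated sequence lacks: it anticipates every small $A$, not just those that happen to be initial segments of a fixed enumeration.
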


\begin{proof} Fix a family of neighborhood assignments $(U_\alpha)_{\alpha<\chi(X)}$ on $X$ such that for every $x\in X$ the family $\{B(x;U_\alpha)\}_{\alpha<\chi(X)}$ is a neighborhood base at $x$. By the definition of the foredensity $\elm(X)$, for every $\alpha<\chi(X)$ there is a subset $A_\alpha\subset X$ of cardinality $|A_\alpha|\le\elm(X)$ such that $A_\alpha\cap B(x;U_\alpha)\ne\emptyset$ for every $x\in X$. Then the union $A=\bigcup_{\alpha<\chi(X)}A_\alpha$ is a dense set of cardinality $\elm(X)\cdot\chi(X)$ in $X$, which implies that $d(X)\le|A|\le \elm(X)\cdot\chi(X)$.
\smallskip

Now assume that $|X|<\aleph_\w$. To derive a contradiction, assume that $\elm(X)<d(X)$. First we show that $d(X)$ is infinite. Assuming that $d(X)$ is finite, fix a dense subset $D\subset X$ of cardinality $|D|=d(X)$. We claim that any two distinct points $x,y\in D$ have disjoint neighborhoods.
Assuming that for some distinct points $x,y\in D$ any two neighborhoods $O_x,O_y\subset X$ have non-empty intersection, we conclude that $O_x\cap O_y\cap D\ne\emptyset$. Since $D$ is finite, there is a point $z\in D$ such that $z\in O_x\cap O_y$ for any neighborhoods $O_x$ and $O_y$ of $x$ and $y$, respectively. Then $D'=\{z\}\cup(D\setminus\{x,y\})$ is a dense set of cardinality $|D'|<|D|=d(X)$, which contradicts the definition of $d(X)$. Therefore we can choose a neighborhood assignment $(O_x)_{x\in X}$ such that $O_x\cap O_y=\emptyset$ for any distinct points $x,y\in D$.

By the definition of the foredensity $\elm(X)$, there is a subset $P\subset X$ of cardinality $|P|\le \elm(X)$ such that $O_x\cap P\ne\emptyset$ for every $x\in D$. Taking into account that the family $(O_x)_{x\in D}$ is disjoint, we conclude that $\elm(X)\ge |P|\ge |D|=d(X)$ and this is a desired contradiction showing that $d(X)$ is infinite.

Let $\U$ be the family of all open sets $U\subset X$ with $d(U)\le\elm(X)$. By Proposition~\ref{p1.8}(3), $c(X)\le \elm(X)$. So, we can find a subfamily $\V\subset\U$ of cardinality $|\V|\le c(X)\le \elm(X)$ such that $\bigcup\V$ is dense in the open set $\bigcup\U$. Since $d(\bigcup\U)\le d(\bigcup\V)\le \sum_{V\in\V}d(V)\le |\V|\cdot \elm(X)=\elm(X)^2<d(X)$, the set $\bigcup \U$ is not dense in $X$. So, the set $W=X\setminus \overline{\bigcup\U}$ is not empty and each non-empty open subset $V\subset W$  has density $d(V)>\elm(X)$, which implies that any subset $A\subset W$ of cardinality $|A|\le\elm(X)$ is nowhere dense in $W$. Fix a non-empty open set $V\subset W$ of smallest possible cardinality. Then each non-empty open set $V'\subset V$ has cardinality $|V'|=|V|$.

Consider the cardinals $\mu=\elm(X)$ and $\kappa=|V|$. It follows that $\mu=\elm(X)<d(V)\le|V|=\kappa\le|X|<\aleph_\w$. Consider the family $[V]^\mu=\{A\subset V:|A|\le\mu\}$. By Lemma~5.10(3) in \cite{AM}, $\cf([\kappa]^\mu)=\kappa$, which allows us to find a subfamily $\{C_\alpha\}_{\alpha<\kappa}\subset[V]^\mu$ such that each set $A\in[V]^\mu$ is contained in some set $C_\alpha$, $\alpha<\kappa$. By transfinite induction for every $\alpha<\kappa$ choose a point $x_\alpha\in V\setminus(C_\alpha\cup\{x_\beta\}_{\beta<\alpha})$. The choice of the point $x_\alpha$ is possible since the set $C_\alpha$ is nowhere dense in $V$ and the open set $V_\alpha=V\setminus\overline{C}_\alpha$ is not empty and hence has cardinality $|V_\alpha|=|V|>|\{x_\beta\}_{\beta<\alpha}|$. After completing the inductive construction, choose a neighborhood assignment $U$ on $X$ such that $B(x_\alpha;U)=V_\alpha$ for every $\alpha<\kappa$.
By definition of $\elm(X)$ for the neighborhood assignment $U$ there is a subset $A\subset X$ of cardinality $|A|\le\elm(X)$ such that $B(x_\alpha;U)\cap A\ne\emptyset$ for every $\alpha<\kappa$.
By the choice of the family $\C$ there is an ordinal $\alpha<\kappa$ such that $A\cap V\subset C_\alpha$. Then $B(x_\alpha;U)\cap A=V_\alpha\cap A=(V\setminus{\overline{C}_\alpha})\cap A=\emptyset$, which is a desired contradiction proving the equality $\elm(X)=d(X)$.
\end{proof}

Theorem~\ref{t4.1} will be completed by an example of a topological space (actually, a semitopological group) whose foredensity is strictly smaller than its density. In the proof we shall use one simple fact of the Shelah's pcf-theory (see, \cite{AM}, \cite{BM}). The pcf-theory studies possible cofinalities of ultraproducts of increasing sequences of cardinals. Namely, let $\kappa$ be a singular cardinal, i.e., an uncountable cardinal with $\cf(\kappa)<\kappa$. The cardinal $\kappa$ can be written as $\kappa=\sup_{\alpha\in\cf(\kappa)}\kappa_\alpha$ for a strictly increasing sequence of regular cardinals $(\kappa_\alpha)_{\alpha\in\cf(\kappa)}$. Fix any ultrafilter $U$ on $\cf(\kappa)$ extending the filter $\{\cf(\kappa)\setminus A:|A|<\cf(\kappa)\}$. Such ultrafilters will be called {\em $\cf(\kappa)$-regular}. The ultrafilter $U$ generates the linear preorder $\le_U$ on $\prod_{\alpha\in\cf(\kappa)}\kappa_\alpha$ defined by $f\le_U g$ iff $\{\alpha\in\cf(\kappa):f(\alpha)\le g(\alpha)\}\in U$. By $\cf_U(\prod_\alpha\kappa_\alpha)$ we denote the cofinality of the linearly preordered set $(\prod_{\alpha\in\cf(\kappa)}\kappa_\alpha,\le_U)$. It is equal to the smallest cardinality of a cofinal subset $\C\subset\prod_{\alpha\in\cf(\kappa)}\kappa_\alpha$ (the cofinality of $\C$ means that for any function $f\in\prod_{\alpha\in\kappa}\kappa_\alpha$ there is a function $g\in\C$ such that $f\le_U g$).
We shall need the following known fact (whose proof is included for convenience of the reader).

\begin{lemma}\label{cof} $\cf_U(\prod_\alpha\kappa_\alpha)>\kappa$.
\end{lemma}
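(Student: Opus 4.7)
My plan is a contradiction: I will assume some $\mathcal C \subset \prod_{\alpha < \cf(\kappa)} \kappa_\alpha$ with $|\mathcal C| \le \kappa$ is $\le_U$-cofinal, and construct a function $g \in \prod_\alpha \kappa_\alpha$ that no element of $\mathcal C$ dominates modulo $U$, giving the contradiction.

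First, I would pass to a cofinal subsequence of $(\kappa_\alpha)$ and project $U$ accordingly (both operations are harmless because $U$ extends the cobounded filter), reducing to the case where $\kappa_\alpha > \cf(\kappa)$ for every $\alpha < \cf(\kappa)$. Then I would enumerate $\mathcal C = \{f_\xi : \xi < \kappa\}$ (repeating entries if $|\mathcal C| < \kappa$), set $\alpha(\xi) = \min\{\alpha < \cf(\kappa) : \xi < \kappa_\alpha\}$ for each $\xi < \kappa$ (well-defined because $\kappa = \sup_\alpha \kappa_\alpha$), and for each $\alpha$ form
\[
S_\alpha = \{\xi < \kappa : \alpha(\xi) < \alpha\} = \bigcup_{\beta < \alpha}\kappa_\beta.
\]
Finally I would put $g(\alpha) = \sup\{f_\xi(\alpha)+1 : \xi \in S_\alpha\}$ (and $g(0)=0$).

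The crucial verification is that $g(\alpha) < \kappa_\alpha$ at every coordinate. At a successor $\alpha = \beta+1$ this is immediate: $|S_\alpha| = \kappa_\beta < \kappa_\alpha$, and by regularity of $\kappa_\alpha$ the sup of fewer than $\kappa_\alpha$ ordinals below $\kappa_\alpha$ stays below $\kappa_\alpha$. At a limit $\alpha$, the key point is $\sup_{\beta<\alpha} \kappa_\beta < \kappa_\alpha$; this follows from the strict increase of the sequence together with the reduction $\kappa_\alpha > \cf(\kappa) > \alpha$, since otherwise $\kappa_\alpha$ would be a sup of at most $|\alpha| < \kappa_\alpha$ ordinals, contradicting regularity. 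I expect this balancing act — arranging simultaneously that $|S_\alpha| < \kappa_\alpha$ and that every $\xi < \kappa$ enters $S_\alpha$ on a $U$-large set of indices — to be the main technical obstacle.

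For the closing step, fix an arbitrary $\xi_0 < \kappa$. For every $\alpha > \alpha(\xi_0)$ we have $\xi_0 \in S_\alpha$ and hence $g(\alpha) > f_{\xi_0}(\alpha)$ by construction. The set $\{\alpha < \cf(\kappa) : \alpha > \alpha(\xi_0)\}$ is cobounded in $\cf(\kappa)$, so the $\cf(\kappa)$-regularity of $U$ places it in $U$. Consequently $\{\alpha : g(\alpha) \le f_{\xi_0}(\alpha)\} \notin U$, so $g \not\le_U f_{\xi_0}$; since $\xi_0$ was arbitrary, $\mathcal C$ fails to be cofinal, contradicting our assumption and proving $\cf_U(\prod_\alpha \kappa_\alpha) > \kappa$.
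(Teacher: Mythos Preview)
Your proof is correct, but it takes a genuinely different route from the paper's. The paper first observes that, since $\le_U$ is a \emph{linear} preorder, the cardinal $\cf_U(\prod_\alpha\kappa_\alpha)$ is regular; because $\kappa$ is singular, it then suffices to prove the weak inequality $\cf_U(\prod_\alpha\kappa_\alpha)\ge\kappa$. This permits a very short diagonalization: assuming $|\mathcal C|<\kappa$, one picks a single index $\beta$ with $|\mathcal C|<\kappa_\beta$ and defines $g(\alpha)$ to dominate \emph{all} of $\mathcal C$ simultaneously at every coordinate $\alpha\ge\beta$. You instead attack $|\mathcal C|\le\kappa$ head-on with a ``sliding'' diagonalization in which $g(\alpha)$ dominates only the first $|S_\alpha|=\sup_{\beta<\alpha}\kappa_\beta$ enumerated functions, so that each $f_{\xi_0}$ is eventually (hence $U$-almost everywhere) beaten. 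Your approach avoids invoking the regularity of the cofinality of a linear order, at the price of the additional bookkeeping --- the reduction to $\kappa_\alpha>\cf(\kappa)$ (which, as you note, can be achieved by passing to a tail, a $U$-large set) and the successor/limit case split for $|S_\alpha|<\kappa_\alpha$. Both arguments are standard pcf moves; the paper's is shorter, yours a bit more self-contained.
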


\begin{proof} Since the preorder $\le_U$ is linear, the cardinal $\cf_U(\prod_\alpha\kappa_\alpha)$ is regular. Since $\kappa$ is singular, the strict inequality $\cf_U(\prod_\alpha\kappa_\alpha)>\kappa$ will follow as soon as we prove that $\cf_U(\prod_\alpha\kappa_\alpha)\ge\kappa$. To derive a contradiction, assume that $\cf_U(\prod_\alpha\kappa_\alpha)<\kappa$. Then the product $\prod_{\alpha\in\cf(\kappa)}\kappa_\alpha$ contains a cofinal subset $\C$ of cardinality $|\C|<\kappa$. Find an ordinal $\beta<\cf(\kappa)$ such that $|\C|<\kappa_\beta$. For every ordinal $\alpha\in\big[\beta,\cf(\kappa)\big)$, the regularity of the cardinal $\kappa_\alpha$ guarantees that the set $\{f(\alpha):f\in\C\}$ is bounded in the cardinal $\kappa_\alpha=[0,\kappa_\alpha)$. So there exists an ordinal $g(\alpha)\in\kappa_\alpha$ such that $f(\alpha)<g(\alpha)$ for all functions $f\in\C$. Fix a function $\bar g\in\prod_{\alpha\in\cf(\kappa)}\kappa_\alpha$ such that $\bar g(\alpha)=g(\alpha)$ for all $\alpha\in[\beta,\cf(\kappa))$. By the cofinality of $\C$, there is a function $f\in \C$ such that $\bar g\le_U f$. Then the set $\{\alpha\in\cf(\kappa):\bar g(\alpha)\le f(\alpha)\}$ belongs to the ultrafilter $U$, which is not possible as this set is contained in the set $[0,\beta)$, which does not belong to  $U$ by the $\cf(\kappa)$-regularity of $U$. This contradiction completes the proof of the inequality $\cf_U(\prod_\alpha\kappa_\alpha)>\kappa$.
\end{proof}

With Lemma~\ref{cof} in our disposition, we are able to present the promised example of a semitopological group whose foredensity is strictly smaller than its density. We recall that a {\em semitopological group} is a group $G$ endowed with a topology $\tau$ making the group operation $\cdot:G\times G\to G$ separately continuous. This is equivalent to saying that for every elements $a,b\in G$ the two-sided shift $s_{a,b}:G\to G$, $s_{a,b}:x\mapsto axb$, is continuous.

\begin{proposition}\label{p1.9} Let $(G,\tau)$ be a semitopological group of singular cardinality $\kappa$ and density $d(G)<\kappa=|G|$. For any infinite cardinal $\delta\le\kappa$ consider the $T_1$-topology $$\ddot\tau=\{U\setminus A:U\in\tau,\;|A|<\delta\}$$ on $G$. Then $\ddot G=(G,\ddot\tau)$ is a semitopological group of density $d(\ddot G)=d(G)\cdot \delta$ and foredensity $\elm(\ddot G)\le d(G)\cdot \cf(\kappa)$.
\end{proposition}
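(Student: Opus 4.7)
The plan is to establish four things in turn: that $\ddot\tau$ is a $T_1$ topology making $G$ into a semitopological group; a preliminary cardinality fact about $\tau$-open sets; the density formula $d(\ddot G)=d(G)\cdot\delta$; and the foredensity bound $\elm(\ddot G)\le d(G)\cdot\cf(\kappa)$, which is the substantive new point and is where Lemma~\ref{cof} gets used.

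Routine checks first: $\mathcal B:=\{U\setminus A:U\in\tau,\ |A|<\delta\}$ is a base for a topology because the finite intersection $(U_1\setminus A_1)\cap(U_2\setminus A_2)=(U_1\cap U_2)\setminus(A_1\cup A_2)$ stays in $\mathcal B$ (using that $\delta$ is infinite); $G\setminus\{y\}\in\mathcal B$ gives $T_1$; and $s_{a,b}^{-1}(U\setminus A)=s_{a,b}^{-1}(U)\setminus s_{a,b}^{-1}(A)\in\mathcal B$ gives continuity of each shift $s_{a,b}:x\mapsto axb$. Fix $D_0\subseteq G$ which is $\tau$-dense with $|D_0|=d(G)$ and WLOG contains $e_G$. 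The preliminary fact is that every non-empty $\tau$-open $U$ has $|U|=\kappa$: for each $g\in G$ the $\tau$-open non-empty set $gU$ meets $D_0$, which rewrites as $G=D_0\cdot U^{-1}$, hence $\kappa\le d(G)\cdot|U|$ and $|U|=\kappa$ since $d(G)<\kappa$.

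For the density, $d(\ddot G)\ge d(G)\cdot\delta$ because any $\ddot\tau$-dense set is $\tau$-dense (so has size $\ge d(G)$) and any set $D$ of size $<\delta$ misses $G\setminus D\in\mathcal B$. For the upper bound I use $|D_0 D_0^{-1}|\le d(G)<\kappa$ to build by transfinite recursion a set $E\subseteq G$ of size $\delta$ with all translates $eD_0$, $e\in E$, pairwise disjoint (at stage $\beta<\delta$ the forbidden set $\bigcup_{\gamma<\beta}e_\gamma D_0 D_0^{-1}$ has size $<\kappa$, so a valid $e_\beta\in G$ always exists). Then $|ED_0\cap U|\ge|E|=\delta$ for every non-empty $\tau$-open $U$, so $D:=ED_0$ of cardinality $\delta\cdot d(G)$ meets every basic $\ddot\tau$-open set $U\setminus A$ with $|A|<\delta$.

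For the foredensity, given a neighborhood assignment $V$ on $\ddot G$ I shrink each $B(x;V)$ to a basic set $U_x\setminus A_x$ with $|A_x|<\delta$. The key observation is that for any $e\in G$ the translate $eD_0$ is $\tau$-dense, so $eD_0\cap U_x\ne\emptyset$, and this intersection avoids $A_x$ iff $eD_0\cap A_x=\emptyset$ iff $e\notin F_x:=A_x D_0^{-1}$. Since $|F_x|\le|A_x|\cdot d(G)<\kappa$, the whole problem reduces to finding a set $E\subseteq G$ with $|E|\le\cf(\kappa)$ and $E\not\subseteq F_x$ for every $x\in G$, because then $A:=ED_0$ has cardinality at most $d(G)\cdot\cf(\kappa)$ and meets every $B(x;V)$.

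Producing this $E$ is the main obstacle and is exactly where Lemma~\ref{cof} enters. Write $\kappa=\sup_{\alpha<\cf(\kappa)}\kappa_\alpha$ with $\kappa_\alpha$ strictly increasing regular, fix a $\cf(\kappa)$-regular ultrafilter $\mathcal U$ on $\cf(\kappa)$, enumerate $G=\{g_\xi:\xi<\kappa\}$, and set $I_x:=\{\xi<\kappa:g_\xi\in F_x\}$, a subset of $\kappa$ of size $<\kappa$. Define $f_x\in\prod_\alpha\kappa_\alpha$ by $f_x(\alpha):=\sup(I_x\cap\kappa_\alpha)$ for all $\alpha$ with $|I_x|<\kappa_\alpha$ (so $f_x(\alpha)<\kappa_\alpha$ by regularity of $\kappa_\alpha$) and $f_x(\alpha):=0$ otherwise. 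The family $\{f_x:x\in G\}$ has size at most $\kappa<\cf_{\mathcal U}(\prod_\alpha\kappa_\alpha)$ by Lemma~\ref{cof}, so it is not $\le_{\mathcal U}$-cofinal, and some $g\in\prod_\alpha\kappa_\alpha$ is $\le_{\mathcal U}$-dominated by no $f_x$; equivalently $\{\alpha:g(\alpha)>f_x(\alpha)\}\in\mathcal U$ for every $x$. For any such $\alpha$ one has $g(\alpha)<\kappa_\alpha$ and $g(\alpha)>\sup(I_x\cap\kappa_\alpha)$, forcing $g(\alpha)\notin I_x$ and hence $g_{g(\alpha)}\notin F_x$. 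Thus $E:=\{g_{g(\alpha)}:\alpha<\cf(\kappa)\}$ has cardinality $\le\cf(\kappa)$ and escapes every $F_x$, which completes the argument.
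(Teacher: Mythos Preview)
Your proof is correct and follows essentially the same strategy as the paper: both reduce the foredensity bound to finding a set $E\subseteq G$ of size $\le\cf(\kappa)$ that escapes every ``bad'' set of size $<\kappa$, and both obtain $E$ from a function $g\in\prod_\alpha\kappa_\alpha$ not $\le_{\mathcal U}$-dominated by the $\kappa$-many functions $f_x$, via Lemma~\ref{cof}. The only structural differences are that the paper works with a dense \emph{subgroup} $D$ (so its bad set is $D\cdot(O_x\setminus B(x;V))$ and the final step is phrased by contradiction), whereas you use a dense set $D_0$ with bad set $A_xD_0^{-1}$ and argue directly; your upper bound $d(\ddot G)\le d(G)\cdot\delta$ is also argued more carefully, since you build $E$ with pairwise disjoint translates $eD_0$ to force $|ED_0\cap U|\ge\delta$, while the paper simply asserts that $DE$ is $\ddot\tau$-dense for \emph{any} $E$ of size $\delta$.

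Two cosmetic points: your ``iff'' in ``this intersection avoids $A_x$ iff $eD_0\cap A_x=\emptyset$'' is really only the sufficient direction (which is all you use); and in the last step you should explicitly intersect $\{\alpha:g(\alpha)>f_x(\alpha)\}\in\mathcal U$ with the tail $\{\alpha:|I_x|<\kappa_\alpha\}\in\mathcal U$ before concluding $g(\alpha)>\sup(I_x\cap\kappa_\alpha)$, since on the initial segment $f_x(\alpha)=0$ carries no information.
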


\begin{proof} It is clear that $\ddot G=(G,\ddot\tau)$ is a semitopological group. Fix any dense subgroup $D\subset G$ of cardinality $|D|=d(G)<\kappa$.

To see that $d(\ddot G)=d(G)\cdot\delta$, fix any subset $E\subset G$ of cardinality $|E|=\delta$  and observe that the set $DE=\{xy:x\in D,\;y\in E\}$ is dense in $\ddot G$ and hence $d(\ddot G)\le|DE|=\max\{d(G),\delta\}=d(G)\cdot \delta$. The reverse inequality $d(\ddot G)\ge\max\{d(G),\delta\}$ follows from the definition of the topology $\ddot\tau$ and the inclusion $\tau\subset\ddot\tau$.
\smallskip

It remains to prove that $\elm(G)\le\max\{d(G),\cf(\kappa)\}$. Fix any neighborhood assignment $V$ on $\ddot G$.
Write the singular cardinal $\kappa$ as $\kappa=\sup_{\alpha<\cf(\kappa)}\kappa_\alpha$ for some strictly increasing sequence of regular cardinals $(\kappa_\alpha)_{\alpha\in\cf(\kappa)}$ such that $\kappa=\sup_{\alpha\in\cf(\kappa)}\kappa_\alpha$.
Let $U$ be any $\cf(\kappa)$-regular ultrafilter on the cardinal $\cf(\kappa)$.

Fix any bijective map $\xi:G\to\kappa$. For every $x\in G$ by the definition of the topology $\ddot\tau$ there exists a neighborhood $O_x\subset G$ of $x$ in the topology $\tau$ such that $|O_x\setminus B(x;V)|<\kappa$. Then the set $D_x=D\cdot\big(O_x\setminus B(x;V)\big)$ has cardinality $|D_x|\le|D|\cdot |O_x\setminus B(x;U)|<\kappa$ and hence $|D_x|<\kappa_{\alpha_x}$ for some ordinal $\alpha_x\in\cf(\kappa)$. Using the regularity of the cardinals $\kappa_\beta$, $\beta\in[\alpha_x,\cf(\kappa){[}\,$, we can choose a function $f_x\in\prod_{\alpha\in\cf(\kappa)}\kappa_\alpha$ such that for any ordinal $\beta\in[\alpha_x,\cf(\kappa){[}$ \ we get $[0,\kappa_\beta{[}\,\cap\,\xi(D_x)\subset [0,f_x(\beta){[}\,$. By Lemma~\ref{cof}, the set of functions $\{f_x:x\in G\}$ is not cofinal in the linearly preordered set $(\prod_{\alpha\in\cf(\kappa)}\kappa_\alpha,\le_U)$. Consequently, there exists a function $f\in \prod_{\alpha\in\cf(\kappa)}\kappa_\alpha$ such that $f\not\le_U f_x$ for every $x\in G$.
Consider the set $F=\{\xi^{-1}(f(\alpha)):\alpha\in\cf(\kappa)\}\subset G$. To finish the proof it remains to check that the set $DF$ meets each $U$-ball $B(x;V)$, $x\in G$.

To derive a contradiction, assume that $DF\cap B(x;V)=\emptyset$ for some point $x\in G$.
We claim that $F\subset D_x$. Assuming that $F\not\subset D_x=D\cdot(O_x\setminus B(x;V))$ and taking into account that $D$ is a subgroup of $G$, we could find a point $y\in F$ such that $y\notin D\cdot(O_x\setminus B(x;V))$, which implies $Dy\cap (O_x\setminus B(x;V))=\emptyset$ and hence $Dy\cap O_x\subset B(x;V)$. Using the density of the sets $D$ and $Dy$ in $(G,\tau)$, find a point $z\in D$ with $zy\in O_x$. Then $zy\in Dy\cap O_x\subset B(x;V)$, which contradicts $zy\in DF\cap B(x;V)=\emptyset$.

So, $F\subset D_x$ and hence $[0,\kappa_\beta{[}\,\cap\,\xi(F)\subset[0,\kappa_\beta{[}\,\cap\,\xi(D_x)\subset [0,f_x(\beta){[}\,$ for any $\beta\in[\alpha_x,\cf(\kappa){[}\,$. It follows from $f\not\le_U f_x$ that the set $\{\alpha\in\cf(\kappa):f(\alpha)\le f_x(\alpha)\}$ does not belong to the ultrafilter $U$ and hence the set $U_x=\{\alpha\in\cf(\kappa):f(\alpha)>f_x(\alpha)\}$ belongs to $U$. The $\cf(\kappa)$-regularity of the ultrafilter $U$ guarantees that $|U_x|=\cf(\kappa)$ and hence we can find an ordinal $\beta\in U_x$ with $\beta>\alpha_x$. For this ordinal we get $f(\beta)\in \xi(F)\cap [0,\kappa_\beta{[}\,\subset [0,f_x(\beta){[}\,$ which contradicts $f_x(\beta)<f(\beta)$. This contradiction shows that the set $DF$ of cardinality $|DF|\le d(G)\cdot\cf(\kappa)$ meets every $V$-ball $B(x;V)$, $x\in G$, witnessing that $\elm(G)\le d(G)\cdot\cf(\kappa)$.
\end{proof}

A topological space $X$ is called {\em totally disconnected} if for any distinct points $x,y\in X$ there is a closed-and-open subset $U\subset X$ such that $x\in U$ and $y\notin U$. Observe that each totally disconnected space is functionally Hausdorff.

\begin{corollary}\label{c1.10} For any singular cardinal $\kappa$ (with $\kappa\le 2^{2^{\cf(\kappa)}}$) there exists a (totally disconnected) $T_1$-space $X$ such that $\elm(X)=\cf(\kappa)$ and $d(X)=|X|=\kappa$.
\end{corollary}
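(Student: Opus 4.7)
The plan is to realize $X$ as $\ddot G$ for a suitable topological group $G$ of cardinality $\kappa$ with $d(G)\le\cf(\kappa)$, and then to invoke Proposition~\ref{p1.9} with $\delta=\kappa$. Consider the compact Boolean group $H=\{0,1\}^{2^{\cf(\kappa)}}$ with coordinate-wise addition modulo $2$ and the Tychonoff product topology. The Hewitt--Marczewski--Pondiczery theorem yields $d(H)\le\cf(\kappa)$; pick a dense subset of size at most $\cf(\kappa)$ and let $D$ be the subgroup of $H$ it generates, which still satisfies $|D|\le\cf(\kappa)$ since $H$ is Boolean. The hypothesis $\kappa\le 2^{2^{\cf(\kappa)}}=|H|$ lets us enlarge $D$ to a subgroup $G=\langle D\cup S\rangle$ of $H$, where $S\subset H$ is any subset of cardinality $\kappa$; then $|G|=\kappa$, and with the subspace topology $\tau$ the group $G$ is a Hausdorff totally disconnected topological group of density $d(G)\le|D|\le\cf(\kappa)<\kappa$.

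Apply Proposition~\ref{p1.9} to $(G,\tau)$ with $\delta=\kappa$. We obtain the $T_1$ semitopological group $\ddot G=(G,\ddot\tau)$ with
\[
d(\ddot G)=d(G)\cdot\kappa=\kappa\quad\text{and}\quad \elm(\ddot G)\le d(G)\cdot\cf(\kappa)\le\cf(\kappa).
\]
Total disconnectedness transfers from $(G,\tau)$ to $\ddot G$ because $\ddot\tau$ refines $\tau$, so any $\tau$-clopen set separating two points remains clopen in $\ddot\tau$. Thus the only thing left is the matching lower bound $\elm(\ddot G)\ge\cf(\kappa)$.

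For this lower bound, fix any bijective enumeration $G=\{x_\alpha:\alpha<\kappa\}$. For each $\alpha<\kappa$ the initial segment $\{x_\beta:\beta<\alpha\}$ has cardinality $|\alpha|<\kappa=\delta$, hence by the very definition of $\ddot\tau$ the set $O_{x_\alpha}:=G\setminus\{x_\beta:\beta<\alpha\}$ is a $\ddot\tau$-open neighborhood of $x_\alpha$. If $A\subset G$ meets every $O_{x_\alpha}$, then for each $\alpha<\kappa$ there exists $\gamma\ge\alpha$ with $x_\gamma\in A$; hence $\{\gamma<\kappa:x_\gamma\in A\}$ is cofinal in $\kappa$ and therefore has cardinality at least $\cf(\kappa)$, giving $|A|\ge\cf(\kappa)$. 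Combined with the upper bound this yields $\elm(\ddot G)=\cf(\kappa)$, completing the proof. The main obstacle is arranging simultaneously a small density $d(G)\le\cf(\kappa)$ and the exact cardinality $|G|=\kappa$; this is precisely where the hypothesis $\kappa\le 2^{2^{\cf(\kappa)}}$ enters, allowing $G$ to be realized as a subgroup of a large power of $\{0,1\}$ where Hewitt--Marczewski--Pondiczery controls the density.
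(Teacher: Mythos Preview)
Your argument is essentially the paper's for the case $\kappa\le 2^{2^{\cf(\kappa)}}$: both of you build $G$ inside $\{0,1\}^{2^{\cf(\kappa)}}$ using Hewitt--Marczewski--Pondiczery to get a dense subgroup of size $\le\cf(\kappa)$, enlarge it to a subgroup of cardinality $\kappa$, and then apply Proposition~\ref{p1.9} with $\delta=\kappa$. Your explicit lower bound $\elm(\ddot G)\ge\cf(\kappa)$ via the neighborhood assignment $O_{x_\alpha}=G\setminus\{x_\beta:\beta<\alpha\}$ is a welcome addition; the paper's proof only records the upper bound $\elm(\ddot G)\le\cf(\kappa)$ and leaves the matching lower bound implicit.

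There is one gap of interpretation, though. You read the parenthetical ``with $\kappa\le 2^{2^{\cf(\kappa)}}$'' as a standing hypothesis, but the corollary is meant to cover \emph{every} singular cardinal $\kappa$: the parenthetical hypothesis is paired with the parenthetical conclusion ``totally disconnected'', so that without it one still gets a $T_1$-space with the stated properties. For general singular $\kappa$ the paper simply takes any group $G$ of cardinality $\kappa$ equipped with the anti-discrete topology $\tau=\{\emptyset,G\}$, so that $d(G)=1<\kappa$, and again applies Proposition~\ref{p1.9} with $\delta=\kappa$. Your cofinality argument for the lower bound works verbatim in that case too, since it uses only that $\ddot\tau$ contains all sets with complement of size $<\kappa$. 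So the only thing missing from your write-up is this one-line alternative construction for the unrestricted case.
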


\begin{proof} Given a singular cardinal $\kappa$, consider the power $\IZ_2^{2^{\cf(\kappa)}}$ of the two-element group $\IZ_2=\IZ/2\IZ$. Endowed with the Tychonoff product topology,  $\IZ_2^{2^{\cf(\kappa)}}$ is a compact Hausdorff topological group.  By Hewitt-Marczewski-Pondiczery Theorem \cite[2.3.15]{Eng}, this topological group has density $d(\IZ_2^{2^{\cf(\kappa)}})=\cf(\kappa)$.
So, $\IZ_2^{2^{\cf(\kappa)}}$ contains a dense subgroup $G_0$ of cardinality $|G_0|=\cf(\kappa)$.

If $\kappa\le 2^{2^{\cf\kappa}}$, then we can choose a subgroup $G\subset\IZ_2^{2^{\cf(\kappa)}}$ of cardinality $|G|=\kappa$ with $G_0\subset G$. Let $\tau$ be the topology on $G$ inherited from $\IZ_2^{2^{\cf(\kappa)}}$. It is clear that $(G,\tau)$ is a totally disconnected Hausdorff topological group of density $d(G)\le|G_0|=\cf(\kappa)$.

If $\kappa>2^{2^{\cf\kappa}}$, then take any group $G$ of cardinality $|G|=\kappa$ and let $\tau=\{\emptyset,G\}$ be the anti-discrete topology on $G$. In this case $G$ is a topological group of density $d(G)=1$.

Denote by $\ddot G$ the group $G$ endowed with the topology
$$\ddot\tau=\{U\setminus A:U\in\tau,\;|A|<\kappa\}.$$ By Proposition~\ref{p1.9}, the topological $T_1$-space  $\ddot G$ has density $d(\ddot G)=|\ddot G|=\kappa$ and foredensity $\elm(\ddot G)\le \cf(\kappa)$. If $\kappa\le 2^{2^{\cf(\kappa)}}$, then the topology $\tau$ is totally disconnected and Hausdorff, and so is the topology $\ddot \tau$.
\end{proof}

Observe that under Generalized Continuum Hypothesis no singular cardinal $\kappa$ satisfies the inequality $\kappa\le 2^{2^{\cf(\kappa)}}$. In this case (a part of) Corollary~\ref{c1.10} is vacuous.
This suggests the following problem.

\begin{problem}\label{pr1} Is there a ZFC-example of a Hausdorff space $X$ with $\elm(X)<d(X)$?
\end{problem}

Another natural problem concerns (non)regularity of spaces constructed in Proposition~\ref{p1.9}.

\begin{problem}\label{pr2} Is there a regular $T_1$-space $X$ with $\elm(X)<d(X)$?
\end{problem}

\begin{remark} Answering Problems~\ref{pr1}, \ref{pr2} Juh\'asz, Soukup and Szentmikl\'ossy  \cite{JSS} proved the equivalence of the following statements:
\begin{enumerate}
\item $2^\kappa<\kappa^{+\w}$ for each cardinal $\kappa$;
\item $\elm(X)=d(X)$ for each Hausdorff space $X$;
\item $\elm(X)=d(X)$ for each zero-dimensional (and hence regular) Hausdorff space $X$.
\end{enumerate}
\end{remark}
\newpage

\end{document}